\def\th@plain{%
    \thm@notefont{}
    \itshape
}
\def\th@definition{%
    \thm@notefont{}
    \normalfont
}
\newtheorem{theorem}{Theorem}[section]
\newtheorem{lemma}[theorem]{Lemma}
\newtheorem{proposition}[theorem]{Proposition}
\newtheorem{corollary}[theorem]{Corollary}
\theoremstyle{definition}
\theoremstyle{remark}
\theoremstyle{theorem}
\theoremstyle{remark}
\newtheorem*{uremark}{Remark}
\DeclareMathOperator*{\vol}{vol}
\numberwithin{equation}{section}
\begin{document}

\title[Equidistribution and Diophantine approximation]{Equidistribution on homogeneous spaces and the distribution of approximates in Diophantine approximation}

\thanks{A.\ G.\ was supported by a grant from the Indo-French Centre for the Promotion of Advanced Research; a Department of Science and Technology, Government of India Swarnajayanti fellowship and a MATRICS grant from the Science and Engineering Research Board.}

\author{Mahbub Alam}
\address{\textbf{Mahbub Alam} \\
    School of Mathematics,
    Tata Institute of Fundamental Research, Mumbai, India 400005}
\email{mahbub@math.tifr.res.in}

\author{Anish Ghosh}
\address{\textbf{Anish Ghosh} \\
    School of Mathematics,
    Tata Institute of Fundamental Research, Mumbai, India 400005}
\email{ghosh@math.tifr.res.in}

\date{}

\begin{abstract}
    The present paper is concerned with equidistribution results for certain flows on homogeneous spaces and related questions in Diophantine approximation.
    Firstly, we answer in the affirmative, a question raised by Kleinbock, Shi and Weiss~\cite{kleinbockshiweiss17} regarding equidistribution of orbits of arbitrary lattices under diagonal flows and with respect to unbounded functions.
    We then consider the problem of Diophantine approximation with respect to rationals in a fixed number field.
    We prove a number field analogue of a famous result of W.\ M.\ Schmidt which counts the number of approximates to Diophantine inequalities for a certain class of approximating functions.
    Further we prove ``spiraling'' results for the distribution of approximates of Diophantine inequalities in number fields.
    This generalizes the work of Athreya, Ghosh and Tseng~\cite{athreyaghoshtseng15, athreyaghoshtseng14} as well as Kleinbock, Shi and Weiss~\cite{kleinbockshiweiss17}.
\end{abstract}

\maketitle

\section{Introduction}\label{}

In this paper, we establish several results relating to the distribution of approximates in Diophantine approximation.
To motivate our results, let us recall the starting point of Diophantine approximation, namely the following corollary to Dirichlet's theorem.

\begin{theorem}\label{}
    For every $\bm{x} \in \mathbb{R}^{m}$ $(m \geq 1)$, there exist infinitely many $(\bm{p}, q) \in \mathbb{Z}^{m} \times \mathbb{Z}_+$ such that
    \begin{equation}\label{dirichlet}
        \|q\bm{x} - \bm{p}\| < c_m|q|^{-1/m}.
    \end{equation}
\end{theorem}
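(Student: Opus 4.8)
The plan is to deduce the statement from the pigeonhole principle (Dirichlet's box principle) and then to promote a single good approximation at each scale into infinitely many approximations overall. This is the classical argument, so I will only indicate the steps.

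First I would fix a large integer $N \geq 1$ and partition the half-open cube $[0,1)^m$ into $N^m$ translates of $[0,1/N)^m$. Applying the pigeonhole principle to the $N^m+1$ points $\{q\bm{x}\} \in [0,1)^m$ for $q = 0,1,\dots,N^m$, where $\{\cdot\}$ denotes the coordinatewise fractional part, I obtain indices $0 \le q_1 < q_2 \le N^m$ for which $\{q_1\bm{x}\}$ and $\{q_2\bm{x}\}$ lie in a common subcube. Setting $q := q_2 - q_1 \in \{1,\dots,N^m\}$ and $\bm{p} := \lfloor q_2\bm{x}\rfloor - \lfloor q_1\bm{x}\rfloor \in \mathbb{Z}^m$ (floor taken coordinatewise) yields $\|q\bm{x} - \bm{p}\|_\infty < 1/N$. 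Since $q \le N^m$ we have $q^{1/m} \le N$, hence $1/N \le q^{-1/m}$; passing from the sup norm to the Euclidean norm costs a factor at most $\sqrt{m}$, so
\[
    \|q\bm{x} - \bm{p}\| \;\le\; \sqrt{m}\,\|q\bm{x} - \bm{p}\|_\infty \;<\; \sqrt{m}\,N^{-1} \;\le\; \sqrt{m}\,q^{-1/m},
\]
which is the desired inequality with $c_m := \sqrt{m}$. The bound is completely uniform in $\bm{x}$, and strictness is automatic from the strict inclusion within a half-open subcube of side $1/N$.

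To upgrade this to infinitely many solutions I would argue by contradiction, assuming $(\bm{p}_1,q_1),\dots,(\bm{p}_k,q_k)$ exhaust all of them. If some coordinate $x_i$ of $\bm{x}$ is irrational then $q_j\bm{x} \neq \bm{p}_j$ for every $j$, so $\delta := \min_{1 \le j \le k}\|q_j\bm{x}-\bm{p}_j\| > 0$; choosing $N$ with $\sqrt{m}/N < \delta$, the pair produced above satisfies $\|q\bm{x}-\bm{p}\| < \delta$ and is therefore not in the list, a contradiction. If instead every coordinate of $\bm{x}$ is rational, write the coordinates over a common denominator $Q \in \mathbb{Z}_+$; then for every positive multiple $q$ of $Q$ the vector $\bm{p} := q\bm{x}$ lies in $\mathbb{Z}^m$ and $\|q\bm{x}-\bm{p}\| = 0 < c_m q^{-1/m}$, so infinitely many solutions exist trivially.

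The only point needing a little care is the uniformity of the constant together with the strictness of the inequality, and this is exactly what the half-open partition above takes care of (if one prefers, one may simply enlarge $c_m$ by an arbitrarily small amount). I do not expect any genuine obstacle. An essentially equivalent route, which I would record as a remark since it adapts more readily to the number-field setting considered later in the paper, is to apply Minkowski's convex body theorem in $\mathbb{R}^{m+1}$ to the symmetric convex set $\{(\bm{y},t) : \|\bm{y}-t\bm{x}\|_\infty < N^{-1},\ |t| \le N^m\}$, whose volume is $\ge 2^{m+1}$, producing a nonzero integer point $(\bm{p},q)$ with the same properties; the passage to infinitely many solutions is then identical.
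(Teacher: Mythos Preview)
Your argument is correct and is the classical proof via Dirichlet's box principle (with the Minkowski alternative noted at the end). The paper itself does not supply a proof of this theorem: it is quoted in the introduction purely as motivation, with only the remark that for the supremum norm one may take $c_m = 1$. So there is nothing to compare against; your write-up simply fills in a standard result that the paper takes for granted.
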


If $\norm{ \cdot }$ is taken to be the supremum norm then $c_m$ can be taken to be 1.
In~\cite{athreyaghoshtseng15}, Athreya, Ghosh and Tseng considered the problem of `spiraling' of approximates connected to the Diophantine inequality above.
Let
\[
    \theta(\bm{p}, q) := \frac{q\bm{x} - \bm{p}}{\|q\bm{x} - \bm{p}\|} \in \mathbb{S}^{m-1}.
\]
Given $A \subseteq \mathbb{S}^{m-1}$, $T >0$, they considered the counting functions
\[
    N(\bm{x}, T) = \#\{(\bm{p}, q) \in \mathbb{Z}^{m} \times \mathbb{Z}_+ : \|q\bm{x} - \bm{p}\| <  c_m|q|^{-1/m}, 0 < q \le T\}
\]
and
\[
    N(\bm{x}, T, A) = \#\{(\bm{p}, q) \in \mathbb{Z}^{m} \times \mathbb{Z}_+ : \|q\bm{x} - \bm{p}\| < c_m|q|^{-1/m}, 0 < q \le T, \theta(\bm{p}, q) \in A \},
\]
and proved:

\begin{theorem}[\cite{athreyaghoshtseng15} Theorem $1.1$]
    For $A \subseteq \mathbb S^{m-1}$ a measurable subset, and for almost every $\bm{x} \in \mathbb{R}^m$,
    \[
        \lim_{T \rightarrow \infty} \frac{N(\bm{x},T, A)}{N(\bm{x}, T)} = \vol(A).
    \]
    Here $\vol := \vol_{\mathbb{S}^{m-1}}$ is the Lebesgue probability measure on $\mathbb{S}^{m-1}$.
\end{theorem}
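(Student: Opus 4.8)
The plan is to pass to the space of unimodular lattices, realise the counting function $N(\bm x,T,A)$ as an ergodic-type integral of a Siegel transform along an expanding horospherical orbit, and then invoke pointwise equidistribution of such orbits; the one genuinely delicate point is that the relevant observable is unbounded. Set $G=\mathrm{SL}_{m+1}(\mathbb{R})$, $\Gamma=\mathrm{SL}_{m+1}(\mathbb{Z})$, $X=G/\Gamma$, and let $\mu$ be the $G$-invariant probability measure on $X$, normalised so that Siegel's integral formula holds. Put $g_t=\mathrm{diag}(e^{t},\dots,e^{t},e^{-mt})$ and $u(\bm x)=\begin{pmatrix}I_m & \bm x\\ 0 & 1\end{pmatrix}$, so that for $w=(-\bm p,q)\in\mathbb{Z}^{m+1}$ one has $u(\bm x)w=(q\bm x-\bm p,\,q)$. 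Writing $v=(v',v_{m+1})$ with $v'\in\mathbb{R}^{m}$, the flow $g_t$ scales $v'$ by $e^{t}$ and $v_{m+1}$ by $e^{-mt}$, so it fixes both the direction $v'/\|v'\|\in\mathbb{S}^{m-1}$ and the quantity $\Phi(v):=\|v'\|^{m}v_{m+1}$, while multiplying $v_{m+1}$ by $e^{-mt}$. A pair $(\bm p,q)$ with $q>0$ satisfies $\|q\bm x-\bm p\|<c_m q^{-1/m}$ and $\theta(\bm p,q)\in A$ exactly when $u(\bm x)w$ lies in the cone $\mathcal{C}_A:=\{v:\ v_{m+1}>0,\ \Phi(v)<c_m^{\,m},\ v'/\|v'\|\in A\}$, and after applying $g_t$ with $t=\tfrac1m\log T$ the side condition $q\le T$ becomes $(g_tv)_{m+1}\le 1$.

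Next I would cut the cone into unit windows. Let $\mathcal{R}_A:=\mathcal{C}_A\cap\{1<v_{m+1}\le e^{m}\}$, a \emph{bounded} region, and let $F_A$ be its Siegel transform, $F_A(\Lambda)=\#(\Lambda\cap\mathcal{R}_A)$. Since $g_t$ fixes $\Phi$ and the direction while multiplying $v_{m+1}$ by $e^{-mt}$, a point of $u(\bm x)\mathbb{Z}^{m+1}$ in $\mathcal{C}_A$ with last coordinate $q$ is counted by $F_A(g_tu(\bm x)\mathbb{Z}^{m+1})$ precisely for $t$ in the length-one interval $[\tfrac1m\log q-1,\ \tfrac1m\log q)$. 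Integrating over $t\in[0,S]$ with $S=\tfrac1m\log T$ yields
\[
  \int_0^{S}F_A\!\left(g_tu(\bm x)\mathbb{Z}^{m+1}\right)\,dt \;=\; N(\bm x,T,A)+O_{\bm x}(1),
\]
the bounded error absorbing the range $q\in(T,e^{m}T]$ of partially counted points and the finitely many points with $q\le e^{m}$; a posteriori this error is negligible once the $A=\mathbb{S}^{m-1}$ case shows that $N(\bm x,T)$ grows like a constant multiple of $\log T$. By Siegel's formula, $F_A\in L^1(X,\mu)$ and $\int_X F_A\,d\mu$ equals the $(m+1)$-dimensional Lebesgue measure of $\mathcal{R}_A$, which a polar-coordinate computation evaluates to $\kappa_m\,\vol(A)$ with $\kappa_m$ independent of $A$. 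Hence the theorem reduces to the assertion that, for Lebesgue-a.e. $\bm x\in\mathbb{R}^m$,
\[
  \frac1S\int_0^{S}F_A\!\left(g_tu(\bm x)\mathbb{Z}^{m+1}\right)\,dt\ \xrightarrow[\,S\to\infty\,]{}\ \int_X F_A\,d\mu ,
\]
after which the theorem follows by dividing this statement for $A$ by the same statement for $\mathbb{S}^{m-1}$ and checking the $O_{\bm x}(1)$ errors are lower order (the main term being of size $\log T$).

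The last display is the heart of the matter. For a \emph{bounded} continuous observable the corresponding statement is classical: mean equidistribution of the expanding horospheres $\{g_tu(\bm x)\mathbb{Z}^{m+1}:\bm x\in[0,1]^m\}$ (via the mixing of $g_t$ and the thickening argument), combined with Birkhoff's ergodic theorem and the genericity of expanding horospherical orbits, upgrades to a.e.\ convergence of the time averages. The obstacle is that $F_A$ is \emph{unbounded} on $X$, since there are unimodular lattices possessing a one-dimensional sublattice that meets $\mathcal{R}_A$ in arbitrarily many points. I would handle this by truncation: for $R>0$ set $F_A^{(R)}=\min(F_A,R)$; the bounded case gives a.e.\ convergence for each $F_A^{(R)}$, so it remains to show
\[
  \limsup_{S\to\infty}\frac1S\int_0^{S}\bigl(F_A-F_A^{(R)}\bigr)\!\left(g_tu(\bm x)\mathbb{Z}^{m+1}\right)\,dt\ \longrightarrow\ 0\quad(R\to\infty)
\]
for a.e.\ $\bm x$. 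This in turn follows from a non-escape-of-mass bound uniform in $t$, namely $\sup_{t\ge0}\int_{[0,1]^m}F_A(g_tu(\bm x)\mathbb{Z}^{m+1})^{p}\,d\bm x<\infty$ for some $p>1$, supplied by the $L^p$-theory of the Siegel transform (Rogers, Schmidt), valid for $1<p<m+1$; a Borel--Cantelli argument over a geometric sequence of times, together with the continuity of $t\mapsto g_t$, converts the uniform moment bound into the required a.e.\ tail estimate.

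The step I expect to be the main obstacle is exactly this last one --- controlling, for almost every $\bm x$, the cusp excursions of $g_tu(\bm x)\mathbb{Z}^{m+1}$ as registered by the unbounded observable $F_A$. The case $m=1$ (the group $\mathrm{SL}_2$) is the most delicate, since there $F_A\notin L^2(X,\mu)$ and one must work with an exponent $1<p<2$ and the associated maximal inequality, or else route the whole argument through the continued fraction algorithm. It is precisely this unbounded-function phenomenon that the equidistribution results established later in the paper are designed to address.
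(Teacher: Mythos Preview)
The theorem you are addressing is quoted from \cite{athreyaghoshtseng15} and is not proved in the present paper; what the paper proves is the number-field, weighted generalisation (Theorem~\ref{thmequidistonsphereforksk}), which specialises to the quoted statement. Your reduction --- passing to $X=\mathrm{SL}_{m+1}(\mathbb{R})/\mathrm{SL}_{m+1}(\mathbb{Z})$, slicing the cone into unit windows, and rewriting $N(\bm x,T,A)$ as a time integral of the Siegel transform $F_A$ along the $g_t$-orbit of $u(\bm x)\Gamma$ --- is exactly the framework the paper uses (compare \eqref{eqsandwichequidistfhat1}--\eqref{eqsandwichequidistfhat3}). For the bounded-observable input you invoke ``genericity of expanding horospherical orbits''; the paper makes this precise by quoting Shi's theorem (Theorem~\ref{thmshitypeforks}).

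Where your proposal diverges from the paper is in the treatment of the unbounded observable, and this is where there is a genuine gap. The paper does \emph{not} truncate and appeal to higher moments. Instead it first proves the Schmidt-type counting asymptotic (Theorems~\ref{thmschmidtforks}--\ref{thmschmidtforkslattice}) directly, via Birkhoff's theorem for $\mu$-a.e.\ lattice followed by a Fubini/perturbation transfer to a.e.\ $\vartheta$ on the horosphere. This already yields \eqref{eqEequidist}--\eqref{eqFequidist}, i.e.\ the desired ergodic averages for one explicit unbounded dominating function. Corollary~\ref{corconvmeasii} then says: once a sequence of measures converges on $C_c(X)$ \emph{and} on a single dominating $\widehat{f_0}$, it converges on every $\psi\in C_\alpha(X)$ with $|\psi|\le\widehat{f_0}$. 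A short geometric chain (Lemmas~\ref{lemequidistforannulus}--\ref{lemequidistforball}) upgrades this to all of $C_\alpha(X)$, and the spiraling theorem follows.

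Your alternative route has two problems. First, the ``$L^p$-theory of the Siegel transform (Rogers, Schmidt)'' bounds $\int_X \widehat{f}^{\,p}\,\mathrm{d}\mu$, i.e.\ moments with respect to the Haar measure on $X$; it does \emph{not} give the uniform horospherical bound $\sup_{t\ge 0}\int_{[0,1]^m} F_A(g_t u(\bm x)\Gamma)^p\,\mathrm{d}\bm x<\infty$ that you need, since the pushforward of Lebesgue under $\bm x\mapsto g_t u(\bm x)\Gamma$ is not $\mu$. (Such bounds do exist, but they come from Schmidt's variance lemma for $\bm x$-averages or from non-divergence estimates of Kleinbock--Margulis type, not from Rogers' moment formula; your remark that $m=1$ is worst because $F_A\notin L^2(X,\mu)$ is a symptom of conflating the two measures.) Second, even granting the uniform horospherical $L^p$ bound, a Borel--Cantelli argument along a geometric sequence $S_n$ does not yield the a.e.\ tail estimate you claim: the bound $\int_{[0,1]^m}\frac{1}{S}\int_0^S (F_A-F_A^{(R)})\,\mathrm{d}t\,\mathrm{d}\bm x\le C\,R^{-(p-1)}$ is \emph{uniform in $S$ but does not decay in $S$}, so the relevant series diverges and Borel--Cantelli gives nothing. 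What is really needed is either a maximal inequality for the horospherical averages or --- and this is what the paper does --- an a priori upper bound on $\limsup_S\frac{1}{S}\int_0^S F_{\mathbb S^{m-1}}\,\mathrm{d}t$ coming from the Schmidt asymptotic itself, after which the complement trick handles general $A$.
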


The result above is closely connected to equidistribution results on the space of unimodular lattices in $\mathbb{R}^{d}$ $(d = m+1)$ and its proof involves a study of spherical averages of Siegel transforms of functions on the space of lattices.
Subsequently, weighted and multiplicative versions of the above result were established in~\cite{athreyaghoshtseng14} (Theorems $1.5$ and $1.6$).
The study of spiraling was then taken up in~\cite{kleinbockshiweiss17}, where several results including a stronger weighted version of the above Theorem was established as a consequence of equidistribution results for diagonal orbits of points on unipotent orbits on the space of lattices.\\

In this paper, among other results, we generalize the above weighted spiraling of approximates to number fields.
We follow the approach of Kleinbock, Shi and Weiss and use an equidistribution result of Shi~\cite{shi17}.
We also answer a question raised by Kleinbock, Shi and Weiss regarding the equidistribution of orbits of certain flows on homogeneous spaces with respect to unbounded functions.
While these results are proved for an arbitrary number field, they are new even for $\mathbb{Q}$ which is the case they were asked for in~\cite{kleinbockshiweiss17}.
Another ingredient in the proof is a number field analogue of famous result of W.\ M.\ Schmidt~\cite{schmidt60} providing an asymptotic count for the number of solutions to Diophantine inequalities.
We believe this result to be of independent interest.
We now describe our results in detail.

\section{Main results}\label{}

Let $K$ be a number field of degree $k$ and let $S$ be the set of all normalized non-conjugate Archimedean valuations on $K$.
Let $S_\mathbb{R} \subseteq S$ be the set of real valuations and $S_\mathbb{C} \subseteq S$ be the set of non-conjugate complex valuations, then $\#(S_\mathbb{R}) + \#(S_\mathbb{C}) = \#S$ and $\#(S_\mathbb{R}) + 2 \cdot \#(S_\mathbb{C}) = k$.
We denote the ring of integers of $K$ by $\mathcal{O}$.
For each $v \in S$ denote by $\iota_v$ the corresponding embedding of $K$ into a completion $K_v$ with respect to $v$.
The Minkowski space associated with $K$ is defined by
\begin{equation}\label{eqdefnks}
    K_S := \prod_{v \in S} K_v \cong \prod_{v \ \text{real}} \mathbb{R} \times \prod_{\substack{v \\ \ \text{complex}}} \mathbb{C} \cong \mathbb{R}^{k}.
\end{equation}
The `diagonal' embedding of $K$ into $K_S$ is denoted by
\[
    \iota_S : K \to K_S, \ r \mapsto {(\iota_v(r))}_{v \in S}.
\]
Note that $\mathcal{O}$ is a lattice in $K_S$ via this embedding.
We permit ourselves a mild abuse of notation and denote the image of $\mathcal{O}$ inside $K_S$ under this embedding by $\mathcal{O}$ itself.
Take a Haar measure on $\mathbb{R}$ which is a multiple (say $\varpi$) of the Lebesgue measure, and denote by $\lambda$ the product measure on $K_S$ induced via the isomorphism $K_S \cong \mathbb{R}^{k}$.
The constant $\varpi$ is so chosen that $\mathcal{O}$ becomes a covolume 1 lattice in $K_S \cong \mathbb{R}^{k}$.

\noindent We define
\[
    \bm{x} \cdot \bm{y} := {(x_v y_v)}_{v \in S}
\]
and fix norms on $K_S$ and $K_S^{d}$ $(d \geq 2)$ as follows:
\begin{equation*}
    \norm{\bm{x}} := \max_{v \in S} |x_v|, \qquad \norm{\bm{x}}_2 := {\left(\sum\nolimits_{v \in S} |x_v|^2\right)}^{\frac{1}{2}}, \\
\end{equation*}
and
\begin{equation*}
    \norm{\vv{\bm{x}}} := \max_{1 \leq i \leq d} \norm{\bm{x}_i}, \qquad \norm{\vv{\bm{x}}}_2 := {\left(\sum\nolimits_{i=1}^{d} \norm{\bm{x}_i}_2^2\right)}^{\frac{1}{2}}
\end{equation*}
for every $\bm{x} = {(x_v)}_{v \in S}$, $\bm{y} = {(y_v)}_{v \in S}$ in $K_S$ and $\vv{\bm{x}} = (\bm{x}_1, \ldots, \bm{x}_d) \in K_S^{d}$.
Here and throughout the rest of the paper, vectors should be thought of as column vectors even though we will write them as row vectors.

Let $m, n \in \mathbb{Z}_+$ be such that $m + n = d$.
The diagonal embedding $\iota_S : K \to K_S$ can be naturally extended to matrices.
For
\[
    A = (A_v)_{v \in S}, B = (B_v)_{v \in S} \in \mathrm{M}_{m \times n}(K_S) := \prod_{v \in S} \mathrm{M}_{m \times n}(K_v),
\]
we define $AB := (A_v B_v)_{v \in S} \in \mathrm{M}_{m \times n} (K_S)$, and for
\[
    \vv{\bm{y}} = (\vv{y_v})_{v \in S} = (y_{1, v}, \ldots, y_{n, v})_{v \in S} \in \prod_{v \in S} K_v^{n} = K_S^{n},
\]
define $A\vv{\bm{y}} := (A_v\vv{y_v})_{v \in S} \in \prod_{v \in S} K_v^{m} = K_S^{m}$.
Note that $K$ acts naturally on $K_S^{d}$ as
\[
    a \vv{\bm{x}} := (\iota_S(a) \cdot \bm{x}_1, \ldots, \iota_S(a) \cdot \bm{x}_d) \ \text{for} \ a \in K \ \text{and} \ \vv{\bm{x}} = (\bm{x}_1, \ldots, \bm{x}_d) \in K_S^{d}.
\]

Let $G = G_{K} := \mathrm{SL}_{d}(K_S) \cong \prod\limits_{v \in S} \mathrm{SL}_{d}(K_v)$, then $\Gamma = \Gamma_{K} := \iota_S(\mathrm{SL}_{d}(\mathcal{O})) = \mathrm{SL}_{d}(\iota_S(\mathcal{O}))$ is a lattice in $G$.
Denote by $X = X_{K}$ the homogeneous space $G/\Gamma$.
Note that $G_\mathbb{Q} = \mathrm{SL}_{d}(\mathbb{R}), \Gamma_\mathbb{Q} = \mathrm{SL}_{d}(\mathbb{Z})$ and $X_\mathbb{Q} = \mathrm{SL}_{d}(\mathbb{R})/\mathrm{SL}_{d}(\mathbb{Z})$.
Let $\mu = \mu_{K}$ be the left Haar measure on $G$ so that the induced $G$-invariant measure on $X$, which we also denote by $\mu$, satisfies $\mu(X) = 1$.

The isomorphism $K_S \cong \mathbb{R}^{k}$ induces an embedding $\mathrm{SL}_{d}(K_S) \hookrightarrow \mathrm{SL}_{dk}(\mathbb{R})$, so that $X$ can be identified with a proper subset of $\mathrm{SL}_{dk}(\mathbb{R})/\mathrm{SL}_{dk}(\mathbb{Z})$, the moduli space of unimodular lattices in $\mathbb{R}^{dk}$.
The map $g\Gamma \mapsto g\iota_S(\mathcal{O}^{d})$ identifies $X$ with the space of discrete rank $d$ free $\mathcal{O}$-submodules of $K_S^{d}$ having basis $\{ \vv{\bm{x}}_1, \ldots, \vv{\bm{x}}_d\}$, such that for every $v \in S, \{ \vv{x}_{1, v}, \ldots, \vv{x}_{d, v}\}$ forms a parallelepiped of area 1 in $K_v^{d}$.
Such an $\mathcal{O}$-submodule of $K_S^{d}$ will be called a \emph{unimodular lattice in $K_S^{d}$}.
See~\cite{einsiedlerghoshlytle16} and~\cite{kleinbockly16} for more details.
\newline

We will consider the problem of weighted Diophantine approximation.
Accordingly, we choose `weight vectors'
\begin{gather*}
    \bm{a} = (a_{i, v})_{1 \leq i \leq m, v \in S} \in \mathbb{R}^{(m \times \#S)}_{>0} \ \text{and} \ \bm{b} = (b_{j, v})_{1 \leq j \leq n, v \in S} \in \mathbb{R}^{(n \times \#S)}_{>0} \\
    \text{such that} \ \sum_{v \in S_\mathbb{R}} \sum_{i = 1}^{m} a_{i, v} + 2 \sum_{v \in S_\mathbb{C}} \sum_{i = 1}^{m} a_{i, v} = \sum_{v \in S_\mathbb{R}} \sum_{j = 1}^{n} b_{j, v} + 2 \sum_{v \in S_\mathbb{C}} \sum_{j = 1}^{n} b_{j, v} = 1,
\end{gather*}
and consider the diagonal subgroup
\begin{equation}\label{eqgtforks}
    D = D_K := {\{g_t\}}_{t \in \mathbb{R}}, \ \text{where} \ g_t = (\operatorname{diag}(e^{a_{1, v} t}, \ldots, e^{a_{m, v} t}, e^{-b_{1, v} t}, \ldots, e^{-b_{n, v} t}))_{v \in S}.
\end{equation}
Denote $\{g_t\}_{t>0}$ by $D^+$.
Given an $L^1$-function $\varphi$ on $X$, following~\cite{kleinbockshiweiss17}, we will say that $\Lambda \in X$ is $(D^+, \varphi)$-\emph{generic} (or $\Lambda \in X$ is \emph{Birkhoff generic for the action of $\{g_t\}$ with respect to $\varphi$}) if
\begin{equation}\label{eqdvphgen}
    \lim_{T \to \infty} \frac{1}{T} \int_{0}^{T} \varphi(g_t \Lambda) \, \mathrm{d}t = \int_{X} \varphi \, \mathrm{d}\mu.
\end{equation}
Moreover for a collection $\mathcal{S}$ of functions on $X$, we will say that $\Lambda$ is $(D^+, \mathcal{S})$-\emph{generic} if it is $(D^+, \varphi)$-generic for every $\varphi \in \mathcal{S}$.
\newline

We are going to prove an equidistribution result for certain unbounded functions on $X$ which have `controlled' growth at infinity following~\cite{kleinbockshiweiss17}.
We will use the embedding $X \hookrightarrow \mathrm{SL}_{dk}(\mathbb{R})/\mathrm{SL}_{dk}(\mathbb{Z})$ to define a unbounded map $\alpha : X \to [0, \infty)$.
For a unimodular lattice $\Lambda$ in $\mathbb{R}^{dk}$ and a given subgroup $\Lambda' \subseteq \Lambda$, let $d(\Lambda')$ denote the covolume of $\Lambda'$ in $\operatorname{span}_\mathbb{R}(\Lambda')$ (measured with respect to the standard Euclidean structure on $\mathbb{R}^{dk}$).
Following~\cite{eskinmargulismozes98} we define
\[
    \alpha(\Lambda) := \max \{d(\Lambda')^{-1} : \Lambda' \ \text{a subgroup of} \ \Lambda\}.
\]
This maximum is attained and $\alpha$ is a proper map.
We restrict $\alpha$ to $X$.

Following~\cite{kleinbockshiweiss17}, let us denote by $C_\alpha(X)$, the space of functions $\varphi$ on $X$ satisfying the following properties:
\begin{enumerate}[label= ($C_\alpha$-\arabic*),font=\normalfont,before=\normalfont]
    \item $\varphi$ is continuous except on a set of $\mu$-measure zero;\label{cal1}
    \item The growth of $\varphi$ is majorized by $\alpha$, i.e., there exists $C > 0$ such that for all $\Lambda \in X$, we have
        \[
            |\varphi(\Lambda)| \leq C\alpha(\Lambda).
        \]\label{cal2}
\end{enumerate}

We will show that $C_\alpha(X)$ contains \emph{Siegel transforms} of Riemann integrable functions on $K_S^{d}$.
Recall that $f : K_S^{d} \to \mathbb{R}$ is called a \emph{Riemann integrable function} if $f$ is bounded with compact support and is continuous except on a set of $\lambda$-measure zero.
Define a function $\widehat{f}$ (called the \emph{Siegel transform of $f$}) on $X$ by
\begin{equation}\label{eqsiegeltransform}
    \widehat{f}(\Lambda) := \sum_{\vv{\bm{v}} \in \Lambda \smallsetminus \{0\}} f( \vv{\bm{v}}).
\end{equation}
The \emph{Siegel integral formula} (which we will prove in the next section) says that for such $f$ we have
\[
    \int_{X} \widehat{f} \, \mathrm{d}\mu = \int_{K_S^{d}} f \, \mathrm{d}\lambda.
\]

Let $\mathcal{U} = \mathcal{U}_K := u(M)$, where $u : M = \mathrm{M}_{m \times n}(K_S) \to G$ is defined as follows: for $\vartheta = (\vartheta_v)_{v \in S} \in M$
\[
    u(\vartheta) :=
    \begin{pmatrix}
        1_m & \vartheta  \\
        0 & 1_n
    \end{pmatrix}
    = {\left(
            \begin{pmatrix}
                1_{m} & \vartheta_v  \\
                0 & 1_{n}
            \end{pmatrix}
        \right)}_{v \in S}
\]
($1_\ell$ stands for the identity matrix of order $\ell$) and let $\nu$ be a Haar measure on $\mathcal{U} \cong M \cong \prod_{v \in S} K_v^{m n}$.
Let $\Lambda_0$ denote the standard lattice $\mathcal{O}^{d} \subseteq K_S^{d}$, and $\Lambda_\vartheta$ denote $u(\vartheta)\Lambda_0$.
Also fix $\Delta \in X$ and denote $u(\vartheta)\Delta$ by $\Delta_\vartheta$.
\newline

From now on elements of $K_S^{m}$ and $K_S^{n}$ will be denoted by $\vv{\bm{x}}$ and $\vv{\bm{y}}$ respectively, i.e., $\vv{\bm{x}} = (x_{1, v}, \ldots, x_{m, v})_{v \in S}$ and $\vv{\bm{y}} = (y_{1, v}, \ldots, y_{n, v})_{v \in S}$.
Whenever we say $(\vv{\bm{p}}, \vv{\bm{q}}) \in \mathcal{O}^{d}$ ($\subseteq K_S^{d} = K_S^{m} \times K_S^{n}$), we would mean that $\vv{\bm{p}} \in K_S^{m}$ and $\vv{\bm{q}} \in K_S^{n}$.
For $\vartheta = (\vartheta_v)_{v \in S} \in M$ and $(\vv{\bm{p}}, \vv{\bm{q}}) \in \mathcal{O}^{d}$, we have $\vartheta \vv{\bm{q}} \in K_S^{m}$.
A similar remark holds for $(\vv{\bm{p}}, \vv{\bm{q}}) \in \Delta$ as well.\\

We define certain `weighted quasi-norms' on $K_S^{m}$ and $K_S^{n}$:
\[
    \norm{\vv{\bm{x}}}_{\bm{a}} := \max_{\substack{1 \leq i \leq m \\ v \in S}} |x_{i, v}|^{\frac{1}{a_{i, v}}} \ {\left(\text{respectively} \ \norm{\vv{\bm{y}}}_{\bm{b}} := \max_{\substack{1 \leq j \leq n \\ v \in S}} |y_{j, v}|^{\frac{1}{b_{j, v}}}\right)}.
\]
We are now ready to state the main results proved in this paper.
Our main results are concerned with equidistribution of flows on $X$ and spiraling of weighted Diophantine inequalities in number fields.
Following the strategy of Kleinbock, Shi and Weiss, an important role in the proofs of these results is played by a counting result for solutions of Diophantine inequalities.
We begin with this result.
\subsection{Diophantine approximation and Schmidt's theorem in $K_S^{d}$}\label{ssecdiosch}
The theory of Diophantine approximation of elements in $K_S$ by rationals from $K$ has been studied by several authors.
We refer the reader to~\cite{einsiedlerghoshlytle16, kleinbockly16, anghoshguanly16, ly16, ghosh19} and the references therein.
We would like to analyze the number of solutions to the following Diophantine inequalities
\begin{equation}\label{eqdioinks}
    \begin{gathered}
        \norm{\vartheta\vv{\bm{q}} - \vv{\bm{p}}}_{\bm{a}} < \frac{c}{\norm{\vv{\bm{q}}}_{\bm{b}}}, \\
        1 \leq \norm{\vv{\bm{q}}}_{\bm{b}} < e^T,
    \end{gathered}
\end{equation}
for $\vartheta \in M, T > 0, c > 0$ and $(\vv{\bm{p}}, \vv{\bm{q}}) \in \mathcal{O}^{d}$ (or $(\vv{\bm{p}}, \vv{\bm{q}}) \in \Delta$ in general).
Accordingly, for $\vartheta \in M$, let $N_{T, c}(\vartheta)$ and $N_{T, c}(\Delta_\vartheta)$ respectively denote the number of solutions $(\vv{\bm{p}}, \vv{\bm{q}}) \in \mathcal{O}^{d}$ and $(\vv{\bm{p}}, \vv{\bm{q}}) \in \Delta$ to~\eqref{eqdioinks}.
Denote
\begin{equation*}
    \begin{gathered}
        E_{T, c} := \{(\vv{\bm{x}}, \vv{\bm{y}}) \in K_S^{m} \times K_S^{n} : \norm{\vv{\bm{x}}}_{\bm{a}} \cdot \norm{\vv{\bm{y}}}_{\bm{b}} < c, 1 \leq \norm{\vv{\bm{y}}}_{\bm{b}} < e^T\}, \\
        E_{T, c}(\Lambda) := E_{T, c} \cap \Lambda, \ \text{for all} \ \Lambda \in X
    \end{gathered}
\end{equation*}
and let $|E_{T, c}|$ be the volume of $E_{T, c}$.
It can be easily shown that $|E_{T, c}| = (2^{\#(S_\mathbb{R})} \pi^{\#(S_\mathbb{C})} \varpi^{k})^d cT$.
Note that $N_{T, c}(\vartheta) = \#(E_{T, c}(\Lambda_\vartheta))$ and $N_{T, c}(\Delta_\vartheta) = \#(E_{T, c}(\Delta_\vartheta))$. We will prove:

\begin{theorem}[A special case of Schmidt's theorem for number fields]\label{thmschmidtforks}
    For $\nu$-a.e.\ $\vartheta \in M$,
    \[
        N_{T, c}(\Delta_\vartheta) = \#(E_{T, c}(\Delta_\vartheta)) \thicksim |E_{T, c}| \ \text{as} \ T \to \infty.
    \]
    In particular,
    \[
        N_{T, c}(\vartheta) = \#(E_{T, c}(\Lambda_\vartheta)) \thicksim |E_{T, c}| \ \text{as} \ T \to \infty.
    \]
\end{theorem}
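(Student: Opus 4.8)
The plan is to run the classical second-moment (variance) argument of W.\ M.\ Schmidt, transplanted to the Minkowski space $K_S^d$. A few preliminaries first: the region $E_{T,c}$ is bounded (the constraint $1\leq\norm{\vv{\bm{y}}}_{\bm b}$ forces $\norm{\vv{\bm{x}}}_{\bm a}<c$ and $\norm{\vv{\bm{y}}}_{\bm b}<e^T$) with boundary of $\lambda$-measure zero, so $\mathbbm{1}_{E_{T,c}}$ is Riemann integrable and $N_{T,c}(\Delta_\vartheta)=\#(E_{T,c}(\Delta_\vartheta))=\widehat{\mathbbm{1}_{E_{T,c}}}(\Delta_\vartheta)$; the quantity $V_T:=|E_{T,c}|=(2^{\#(S_\mathbb{R})}\pi^{\#(S_\mathbb{C})}\varpi^k)^d\,cT$ is linear in $T$; and $T\mapsto E_{T,c}$ is non-decreasing, hence so is $T\mapsto\#(E_{T,c}(\Lambda))$ for every $\Lambda$. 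Since $\nu$ is an infinite measure on $M$, it suffices to prove the asymptotic for $\nu$-a.e.\ $\vartheta$ in an arbitrary bounded box $B\subseteq M$ (on which $\vartheta\mapsto\widehat{\mathbbm{1}_{E_{T,c}}}(\Delta_\vartheta)$ is bounded, the lattices $\Delta_\vartheta$ staying in a fixed compact subset of $X$), and then exhaust $M$ by countably many such boxes.

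The first ingredient is the first-moment estimate. Expanding $\widehat{\mathbbm{1}_{E_{T,c}}}$ and integrating termwise over $B$, only vectors $(\vv{\bm{p}},\vv{\bm{q}})\in\Delta$ with $\vv{\bm{q}}\neq 0$ contribute (as $\mathbbm{1}_{E_{T,c}}$ vanishes where $\norm{\vv{\bm{y}}}_{\bm b}<1$), and $u(\vartheta)(\vv{\bm{p}},\vv{\bm{q}})=(\vv{\bm{p}}+\vartheta\vv{\bm{q}},\vv{\bm{q}})$; using that $\vartheta\mapsto\vartheta\vv{\bm{q}}$ is a surjective linear map $M\to K_S^m$ with fibres of fixed dimension, each $\vartheta$-integral collapses to the Lebesgue volume $\vol_m\{\norm{\vv{\bm{x}}}_{\bm a}<c/\norm{\vv{\bm{q}}}_{\bm b}\}$, which equals $(\mathrm{const})\cdot c/\norm{\vv{\bm{q}}}_{\bm b}$ exactly because of the normalization $\sum_{v\in S_\mathbb{R}}\sum_i a_{i,v}+2\sum_{v\in S_\mathbb{C}}\sum_i a_{i,v}=1$. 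Summing over $\vv{\bm{q}}$ and comparing the resulting sum with $\int\norm{\vv{\bm{y}}}_{\bm b}^{-1}\,\mathrm{d}\vv{\bm{y}}$ over $1\leq\norm{\vv{\bm{y}}}_{\bm b}<e^T$ — both $\thicksim(\mathrm{const})\,T$, again because $\vol\{\norm{\vv{\bm{y}}}_{\bm b}<r\}$ is linear in $r$ — gives $\tfrac{1}{\nu(B)}\int_B\widehat{\mathbbm{1}_{E_{T,c}}}(\Delta_\vartheta)\,\mathrm{d}\nu=V_T(1+o(1))$ as $T\to\infty$. For $\Delta=\Lambda_0$, where $\mathcal{U}\Lambda_0$ is the compact torus $M/\mathrm{M}_{m\times n}(\mathcal{O})$, this is actually an exact identity, the ideal-norm factors produced by the change of variables cancelling against the index of the relevant sublattice — the number-field analogue of the classical cancellation of $\gcd(\vv{\bm{q}})$; this also matches the Siegel integral formula $\int_X\widehat{\mathbbm{1}_{E_{T,c}}}\,\mathrm{d}\mu=V_T$.

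The second ingredient, which I expect to be the real work, is the second-moment estimate. Expanding $\widehat{\mathbbm{1}_{E_{T,c}}}^2$ into pairs $(w,w')$, the diagonal $w=w'$ reproduces the first moment and is $O(V_T)$; the off-diagonal pairs $w=(\vv{\bm{p}},\vv{\bm{q}})$, $w'=(\vv{\bm{p}}',\vv{\bm{q}}')$ for which $\vartheta\mapsto(\vartheta\vv{\bm{q}},\vartheta\vv{\bm{q}}')$ surjects onto $K_S^m\times K_S^m$ are ``decorrelated'' and, after the same change of variables and summation, yield the main term $V_T^2(1+o(1))$; the remaining pairs — those in which the two approximation conditions are correlated, governed by the common ideal factors of $\vv{\bm{q}},\vv{\bm{q}}'$ and by proportionality of $\vv{\bm{q}},\vv{\bm{q}}'$ over $K$ — must be shown to contribute $o(V_T^2)$. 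Establishing this last bound requires a divisor-type summation over $\mathcal{O}^n$ carrying the ideal-norm weights thrown up by the change of variables; over a number field the bookkeeping (gcd-ideals, their norms, units) is heavier than over $\mathbb{Q}$, although the linear growth of $\vol\{\norm{\cdot}_{\bm b}<r\}$ keeps it tractable. The target is $\mathrm{Var}_B(T):=\tfrac{1}{\nu(B)}\int_B|\widehat{\mathbbm{1}_{E_{T,c}}}(\Delta_\vartheta)-V_T|^2\,\mathrm{d}\nu=O\big(V_T(\log(2+V_T))^{\kappa}\big)$ for some $\kappa\geq 0$, which in particular is $o(V_T^2)$.

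Finally one assembles the pieces. By Chebyshev, $\nu\big(\{\vartheta\in B:|\widehat{\mathbbm{1}_{E_{T,c}}}(\Delta_\vartheta)-V_T|>\varepsilon V_T\}\big)\ll_{\varepsilon,B}\mathrm{Var}_B(T)/V_T^2\ll_{\varepsilon,B}(\log(2+V_T))^{\kappa}/V_T$. Taking the lacunary sequence $T_j=j^2$ (so $V_{T_j}\thicksim(\mathrm{const})\,c\,j^2$ and $T_{j+1}/T_j\to 1$) makes these bounds summable in $j$, so Borel–Cantelli gives $\widehat{\mathbbm{1}_{E_{T_j,c}}}(\Delta_\vartheta)/V_{T_j}\to 1$ for $\nu$-a.e.\ $\vartheta\in B$. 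For general $T$, pick $j$ with $T_j\leq T\leq T_{j+1}$; monotonicity of $\#(E_{T,c}(\Delta_\vartheta))$ in $T$ and linearity of $T\mapsto V_T$ give
\[
\frac{V_{T_j}}{V_{T_{j+1}}}\cdot\frac{\#(E_{T_j,c}(\Delta_\vartheta))}{V_{T_j}}\ \leq\ \frac{\#(E_{T,c}(\Delta_\vartheta))}{V_T}\ \leq\ \frac{V_{T_{j+1}}}{V_{T_j}}\cdot\frac{\#(E_{T_{j+1},c}(\Delta_\vartheta))}{V_{T_{j+1}}},
\]
and since $V_{T_{j+1}}/V_{T_j}\to 1$ the squeeze yields $N_{T,c}(\Delta_\vartheta)=\#(E_{T,c}(\Delta_\vartheta))\thicksim|E_{T,c}|$ for $\nu$-a.e.\ $\vartheta\in B$; exhausting $M$ by countably many boxes proves the first assertion, and the ``in particular'' statement is the special case $\Delta=\Lambda_0$. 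Of the whole argument the only genuinely delicate point is the off-diagonal bound in the second moment, i.e.\ controlling the correlated pairs $(\vv{\bm{q}},\vv{\bm{q}}')$ with the number-field-specific ideal-theoretic weights.
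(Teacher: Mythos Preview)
Your approach is genuinely different from the paper's, and the comparison is worth spelling out.

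\textbf{What the paper does.} The paper never computes a variance. Instead it first proves the asymptotic for $\mu$-a.e.\ $\Lambda\in X$ (Theorem~\ref{thmschmidtforkslattice}) by pure ergodic theory: the sandwich inequality~\eqref{eqsandwichequidistfhat2} traps $\#(E_{T,c}(\Lambda))$ between two ergodic averages of $\widehat f_{r,c}$, and Birkhoff's theorem plus the Siegel integral formula give the limit. To pass from ``$\mu$-a.e.\ $\Lambda$'' to ``$\nu$-a.e.\ $\vartheta$'' the paper uses the product decomposition $G\supseteq\mathcal M\cong\mathcal H\times\mathcal U$ (Lemma~\ref{lemprodmeasonslkks}) and a Fubini argument (Proposition~\ref{propfubinistyle}) to find, for each $\vartheta$ in a full-measure set, a sequence $h_\ell\to 1_d$ in $\mathcal H$ such that $h_\ell\Delta_\vartheta$ is Birkhoff generic for $\widehat f_{r,c\pm\varepsilon_\ell}$. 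The geometric sandwich $E_{T-1,c-\varepsilon_\ell}\smallsetminus\mathcal C_\ell^1\subseteq h_\ell(E_{T,c}\smallsetminus\mathcal C_\ell^2)\subseteq E_{T+1,c+\varepsilon_\ell}$ then transfers the asymptotic from $h_\ell\Delta_\vartheta$ to $\Delta_\vartheta$. No moments, no Borel--Cantelli, no divisor sums.

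\textbf{What each approach buys.} The ergodic route is soft: it needs only $\widehat f_{r,c}\in L^1(X)$ and ergodicity of $\{g_t\}$, and it handles every $\Delta\in X$ uniformly with no arithmetic input beyond the Siegel formula. Its price, as the paper itself notes, is that it yields no error term. Your variance route is the one that \emph{would} give an error term (Schmidt's $O(V_T^{1/2}\log^\kappa V_T)$ over $\mathbb Q$), and it stays entirely on the horocycle $\mathcal U\Delta$ without ever invoking the ambient dynamics on $X$.

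\textbf{The gap in your proposal.} You have correctly identified it yourself: the off-diagonal second moment is asserted, not proved. Over $\mathbb Q$ this is already the heart of Schmidt's 1960 paper; over a general number field the ``correlated'' pairs $(\vv{\bm q},\vv{\bm q}')$ are governed by the $\mathcal O$-module structure of the projection of $\Delta$ to $K_S^n$, and the weights produced by your change of variables are ideal norms rather than integers. Turning ``a divisor-type summation over $\mathcal O^n$'' into an actual bound $O(V_T\log^\kappa V_T)$ requires real work (a number-field gcd/ideal lemma and a lattice-point count in weighted boxes), and for an arbitrary $\Delta\in X$---not just $\Lambda_0$---even the first-moment identity you sketch is only asymptotic and needs the covolume-$1$ structure of $\Delta$ to be made precise. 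Until that estimate is written down, the proposal is a plausible strategy rather than a proof; the paper's ergodic argument sidesteps exactly this difficulty.
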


Here $f(T) \thicksim g(T)$ means that $\frac{f(T)}{g(T)} \to 1$ as $T \to \infty$.
We provide a brief history of results preceding Theorem~\ref{thmschmidtforks}.
For $K = \mathbb{Q}$, the above result, with a square root error term, and for arbitrary monotonic approximation functions, was proved in~\cite{schmidt60} by W.\ M.\ Schmidt.
This constituted a far reaching quantitative generalization of Khintchine's theorem.
By arbitrary approximating functions, we mean that Schmidt considered inequalities of the form
\[
    \norm{\vartheta \vv{\bm{q}} - \vv{\bm{p}}} < \psi(\vv{\bm{q}}).
\]

In~\cite{athreyaparrishtseng16}, Athreya, Parrish and Tseng provided an alternative proof of a simplified version of Schmidt's theorem, i.e., they provided the main term for power-type approximating functions like in~\eqref{eqdioinks}.
Their proof uses as its main tool, Birkhoff's ergodic theorem.
They did not consider Diophantine inequalities with weights, but weights can easily be incorporated into their proof.
Moving on to extensions of $\mathbb{Q}$, the only quantitative Schmidt type results that we are aware of are for imaginary quadratic extensions.
We recall Sullivan's famous paper~\cite{sullivan82} where he proved a Khintchine type theorem for $\mathbb{Q}(\sqrt{d})$, for $d$ a square-free negative integer.
Subsequently, a quantitative version of Sullivan's theorem was established by Nakada~\cite{nakada88}, namely he established Theorem~\ref{thmschmidtforks} for the case $K = \mathbb{Q}(\sqrt{d})$.
As far as we are aware, Theorem~\ref{thmschmidtforks} is new in all other cases.
It constitutes a quantitative version of Khintchine's theorem for arbitrary number fields.
We note that Khintchine's theorem for number fields is proved in T.\ Ly's thesis~\cite{ly16}.
It would be interesting to obtain an error bound in the context of Theorem \ref{thmschmidtforks}. We have followed the approach of~\cite{athreyaparrishtseng16} which does not yield an error term.
\newline

Before proving Theorem~\ref{thmschmidtforks}, we will prove the following result concerning unimodular lattices in $X$.

\begin{theorem}\label{thmschmidtforkslattice}
    For $\mu$-a.e.\ $\Lambda \in X$
    \[
        \#{\left(E_{T, c}(\Lambda)\right)} \thicksim |E_{T, c}| \ \text{as} \ T \to \infty.
    \]
\end{theorem}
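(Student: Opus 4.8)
The plan is to exploit the self-similarity of the family $\{E_{T,c}\}_{T>0}$ under the diagonal flow $D=\{g_t\}$, reducing the asymptotic count to Birkhoff's pointwise ergodic theorem together with the Siegel integral formula. Since $g_t$ scales the coordinate $x_{i,v}$ by $e^{a_{i,v}t}$ and $y_{j,v}$ by $e^{-b_{j,v}t}$, the weighted quasi-norms transform by $\norm{g_t\vv{\bm{x}}}_{\bm{a}}=e^{t}\norm{\vv{\bm{x}}}_{\bm{a}}$ and $\norm{g_t\vv{\bm{y}}}_{\bm{b}}=e^{-t}\norm{\vv{\bm{y}}}_{\bm{b}}$; in particular the product $\norm{\vv{\bm{x}}}_{\bm{a}}\norm{\vv{\bm{y}}}_{\bm{b}}$ is $D$-invariant while $\norm{\vv{\bm{y}}}_{\bm{b}}$ is multiplied by $e^{t}$ under $g_{-t}$. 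Writing $E:=E_{1,c}$, one checks that $g_{-j}E=\{(\vv{\bm{x}},\vv{\bm{y}}):\norm{\vv{\bm{x}}}_{\bm{a}}\norm{\vv{\bm{y}}}_{\bm{b}}<c,\ e^{j}\le\norm{\vv{\bm{y}}}_{\bm{b}}<e^{j+1}\}$, so that for every integer $N\ge 1$ there is a disjoint decomposition $E_{N,c}=\bigsqcup_{j=0}^{N-1}g_{-j}E$, and hence, for every $\Lambda\in X$,
\[
    \#(E_{N,c}(\Lambda)) \;=\; \sum_{j=0}^{N-1}\#(g_{-j}E\cap\Lambda) \;=\; \sum_{j=0}^{N-1}\#(E\cap g_{j}\Lambda) \;=\; \sum_{j=0}^{N-1}\widehat{\chi_E}(g_1^{j}\Lambda),
\]
where $\chi_E$ denotes the indicator of $E$ (note $0\notin E$, so this count is literally the Siegel transform of $\chi_E$).

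On $E$ we have $\norm{\vv{\bm{y}}}_{\bm{b}}\ge 1$, so $\norm{\vv{\bm{x}}}_{\bm{a}}\norm{\vv{\bm{y}}}_{\bm{b}}<c$ forces $\norm{\vv{\bm{x}}}_{\bm{a}}<c$; together with $\norm{\vv{\bm{y}}}_{\bm{b}}<e$ this shows $E$ is bounded, and its boundary lies in the $\lambda$-null set $\{\norm{\vv{\bm{x}}}_{\bm{a}}\norm{\vv{\bm{y}}}_{\bm{b}}=c\}\cup\{\norm{\vv{\bm{y}}}_{\bm{b}}=1\}\cup\{\norm{\vv{\bm{y}}}_{\bm{b}}=e\}$, so $\chi_E$ is Riemann integrable and, by the Siegel integral formula, $\widehat{\chi_E}\in L^1(X,\mu)$ with $\int_X\widehat{\chi_E}\,\mathrm{d}\mu=|E_{1,c}|$. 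Next, $D$ is non-precompact in $G$ and $\Gamma$ is an irreducible lattice in the semisimple group $G$, so the $D$-action on $(X,\mu)$ is mixing by the Howe--Moore theorem; in particular the time-one map $\Lambda\mapsto g_1\Lambda$ is mixing, hence ergodic. Birkhoff's pointwise ergodic theorem applied to $\widehat{\chi_E}$ then gives, for $\mu$-a.e.\ $\Lambda\in X$, $\tfrac{1}{N}\#(E_{N,c}(\Lambda))=\tfrac{1}{N}\sum_{j=0}^{N-1}\widehat{\chi_E}(g_1^{j}\Lambda)\to|E_{1,c}|$ as $N\to\infty$. Since $|E_{T,c}|$ is linear in $T$, $|E_{N,c}|=N|E_{1,c}|$, so $\#(E_{N,c}(\Lambda))\thicksim|E_{N,c}|$ along the integers; and because $T\mapsto E_{T,c}$ is increasing with $|E_{\lfloor T\rfloor,c}|/|E_{\lfloor T\rfloor+1,c}|\to 1$, a sandwiching argument upgrades this to $\#(E_{T,c}(\Lambda))\thicksim|E_{T,c}|$ for $\mu$-a.e.\ $\Lambda$ as $T\to\infty$.

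The conceptual heart is the self-similarity reduction: it is what converts a lattice-point count into a single application of Birkhoff's theorem, after which everything is routine. The two places needing (mild) care are verifying Riemann integrability of $\chi_{E_{1,c}}$ — the weighted quasi-norms make the region slightly awkward, though it is plainly bounded with boundary a finite union of pieces of hypersurfaces — and quoting the correct ergodicity input (Howe--Moore, or Moore's ergodicity theorem) to pass from ergodicity of the flow to ergodicity of the time-one map $g_1$. As in the approach of Athreya--Parrish--Tseng, the soft nature of Birkhoff's theorem yields no rate of convergence, consistent with the remark above that Theorem~\ref{thmschmidtforks} comes without an error term.
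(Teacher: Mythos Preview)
Your proof is correct and follows essentially the same approach as the paper: both reduce the count to a Birkhoff average of the Siegel transform of $\mathbbm{1}_{E_{r,c}}$ via the self-similarity of $E_{T,c}$ under $\{g_t\}$, invoke Moore/Howe--Moore for ergodicity, and conclude using the Siegel integral formula and the linearity of $|E_{T,c}|$ in $T$. The only cosmetic difference is that you use the discrete time-one map with an exact disjoint decomposition $E_{N,c}=\bigsqcup_{j=0}^{N-1}g_{-j}E_{1,c}$ (sandwiching only at the end to pass from integers to all $T$), whereas the paper works in continuous time with the sandwich inequalities~\eqref{eqsandwichequidistfhat1}--\eqref{eqsandwichequidistfhat2} throughout.
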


\subsection{Spiraling of approximations and spherical averages}\label{}
Let $F_{\bm{a}t}$ be the $\bm{a}$-weighted flow on $K_S^{m}$ defined by
\[
    F_{\bm{a}t}(\vv{\bm{x}}) := (e^{a_{1, v} t}x_{1, v}, \ldots, e^{a_{m, v} t}x_{m, v})_{v \in S} \ \text{for} \ \vv{\bm{x}} \in K_S^{m} \ \text{and} \ t \in \mathbb{R}
\]
and define $F_{\bm{b}t}$ on $K_S^{n}$ similarly.
Define
\[
    \mathbb{S}^{km - 1} := {\left\{\vv{\bm{x}} \in K_S^{m} : \sum_{v \in S} \sum_{i = 1}^{m} |x_{i, v}|^2 = 1\right\}}
\]
and define $\mathbb{S}^{kn - 1}$ similarly.
For nonzero $\vv{\bm{x}} \in K_S^{m}$ and $\vv{\bm{y}} \in K_S^{n}$ let
\begin{equation}\label{eqprojforksj}
    \pi_{\bm{a}}(\vv{\bm{x}}) := \{F_{\bm{a}t}(\vv{\bm{x}}) : t \in \mathbb{R}\} \cap \mathbb{S}^{km-1}, \ \pi_{\bm{b}}(\vv{\bm{y}}) := \{F_{\bm{b}t}(\vv{\bm{y}}) : t \in \mathbb{R}\} \cap \mathbb{S}^{kn-1}
\end{equation}
(these intersections are clearly singleton).

For $\vartheta \in M$ and $A \subseteq \mathbb{S}^{km-1}, B \subseteq \mathbb{S}^{kn-1}$ with boundaries of measure zero, let $N_{T, c}(A, B; \vartheta)$ and $N_{T, c}(A, B; \Delta_\vartheta)$ respectively denote the number of solutions $(\vv{\bm{p}}, \vv{\bm{q}}) \in \mathcal{O}^{d}$ and $(\vv{\bm{p}}, \vv{\bm{q}}) \in \Delta$ to
\begin{equation}\label{eqdioinksAB}
    \begin{gathered}
        \norm{\vartheta\vv{\bm{q}} - \vv{\bm{p}}}_{\bm{a}} < \frac{c}{\norm{\vv{\bm{q}}}_{\bm{b}}}, \\
        1 \leq \norm{\vv{\bm{q}}}_{\bm{b}} < e^T, \\
        \pi_{\bm{a}}(\vartheta\vv{\bm{q}} - \vv{\bm{p}}) \in A \ \text{and} \ \pi_{\bm{b}}(\vv{\bm{q}}) \in B.
    \end{gathered}
\end{equation}
Define
\begin{gather*}
    E_{T, c}(A, B) := {\left\{(\vv{\bm{x}}, \vv{\bm{y}}) \in K_S^{m} \times K_S^{n} : \substack{\displaystyle\norm{\vv{\bm{x}}}_{\bm{a}} \cdot \norm{\vv{\bm{y}}}_{\bm{b}} < c, 1 \leq \norm{\vv{\bm{y}}}_{\bm{b}} < e^T, \\ \\ \displaystyle\pi_{\bm{a}}(\vv{\bm{x}}) \in A, \pi_{\bm{b}}(\vv{\bm{y}}) \in B}\right\}}, \\
    E_{T, c}(A, B; \Lambda) := E_{T, c}(A, B) \cap \Lambda, \ \text{for all} \ \Lambda \in X.
\end{gather*}
It can be shown that $|E_{T, c}(A, B)| = |E_{T, c}| \vol(A) \vol(B)$, where $\vol$ denotes the standard probability measure on the spheres $\mathbb{S}^{km-1}$ and $\mathbb{S}^{kn-1}$.
Note that $N_{T, c}(A, B; \vartheta) = \#(E_{T, c}(A, B; \Lambda_\vartheta))$ and $N_{T, c}(A, B; \Delta_\vartheta) = \#(E_{T, c}(A, B; \Delta_\vartheta))$.
We prove that

\begin{theorem}\label{thmcalgenofDevth}
    Let $\Delta \in X$.
    Then for almost every $\vartheta \in M$, $\Delta_\vartheta$ is $(D^+, C_\alpha(X))$-generic.
\end{theorem}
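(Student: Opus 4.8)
\noindent The plan is to bootstrap from equidistribution of the $\{g_t\}$-orbit tested against compactly supported functions to the full class $C_\alpha(X)$, the engine being a uniform bound on the growth of $\alpha$ along $\nu$-almost every orbit.

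\textbf{Step 1: the compactly supported case.} Since $\mc U = u(M)$ is a subgroup of the expanding horospherical subgroup of $\{g_t\}_{t>0}$ (all weights $a_{i,v},b_{j,v}$ being positive), the equidistribution theorem of Shi~\cite{shi17} applies to the $g_t$-translates of the leaf $\mc U\Delta$. Combined with the standard argument --- as in~\cite{kleinbockshiweiss17} --- deducing pointwise Birkhoff genericity for $\nu$-a.e.\ $\vartheta$ from equidistribution of expanding translates, and using separability of $C_c(X)$ to fix a single exceptional set, this yields a $\nu$-conull set $M_0\subseteq M$ such that $\Delta_\vartheta$ is $(D^+,C_c(X))$-generic for every $\vartheta\in M_0$.

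\textbf{Step 2: a uniform moment estimate.} The crux is to produce an exponent $p>1$ and a $\nu$-conull set $M_1\subseteq M$ so that $\displaystyle\sup_{T>0}\frac1T\int_0^T\alpha(g_t\Delta_\vartheta)^p\,\mathrm dt<\infty$ for every $\vartheta\in M_1$. For this I would appeal to the theory of Margulis functions following Eskin--Margulis--Mozes~\cite{eskinmargulismozes98}: for $p>1$ close enough to $1$, the function $\alpha^p$ satisfies an averaged contraction inequality $\frac1{\nu(B)}\int_B\alpha^p(g_{t_0}u(\vartheta)\Lambda)\,\mathrm d\nu(\vartheta)\le c\,\alpha^p(\Lambda)+b$ valid for all $\Lambda\in X$, with a fixed $t_0>0$, a fixed box $B\subseteq M$, and constants $0<c<1$, $b>0$; integrating this against the $g_{t_0}$-invariant, $\mc U$-averaging-invariant measure $\mu$ already forces $\alpha^p\in L^1(X,\mu)$. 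Iterating along $g_{t_0}u(B)$ and running a Borel--Cantelli / maximal-function argument over the $u$-fibres then upgrades the averaged inequality to the desired pointwise-in-$\vartheta$ uniform bound on time averages of $\alpha^p(g_t\Delta_\vartheta)$. Verifying that the Margulis-function constructions of~\cite{eskinmargulismozes98} go through in the $K_S$/number-field setting (where $X\hookrightarrow\mathrm{SL}_{dk}(\mathbb R)/\mathrm{SL}_{dk}(\mathbb Z)$ but $\mu$ lives on the proper subvariety $X$), and carrying out this upgrade, is where I expect the main difficulty to lie. A formal consequence is non-escape of mass along the orbit: for $\vartheta\in M_1$, $\frac1T\int_0^T\mathbbm 1_{\{\alpha>R\}}(g_t\Delta_\vartheta)\,\mathrm dt\le R^{-p}\sup_T\frac1T\int_0^T\alpha^p(g_t\Delta_\vartheta)\,\mathrm dt\to 0$ as $R\to\infty$, uniformly in $T$.

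\textbf{Step 3: truncation.} Fix $\vartheta\in M_0\cap M_1$ and $\varphi\in C_\alpha(X)$ with $|\varphi|\le C\alpha$. For $R$ outside the countable set of levels with $\mu(\{\alpha=R\})>0$, split $\varphi=\varphi_R+\varphi^R$ where $\varphi_R:=\varphi\cdot\mathbbm 1_{\{\alpha\le R\}}$ is bounded and $\mu$-a.e.\ continuous. Sandwiching $\varphi_R$ between continuous functions agreeing with it $\mu$-a.e., truncating these in the cusp, and invoking Step 1 together with the non-escape of mass from Step 2, gives $\frac1T\int_0^T\varphi_R(g_t\Delta_\vartheta)\,\mathrm dt\to\int_X\varphi_R\,\mathrm d\mu$. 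For the tail, $\alpha>R$ forces $\alpha\le R^{1-p}\alpha^p$, whence
\[
    \Bigl|\frac1T\int_0^T\varphi^R(g_t\Delta_\vartheta)\,\mathrm dt\Bigr|\le C\,R^{1-p}\sup_{T>0}\frac1T\int_0^T\alpha^p(g_t\Delta_\vartheta)\,\mathrm dt,
\]
uniformly in $T$. Since $\int_X\varphi_R\,\mathrm d\mu\to\int_X\varphi\,\mathrm d\mu$ by dominated convergence ($|\varphi|\le C\alpha\in L^1(X,\mu)$), letting $R\to\infty$ shows $\frac1T\int_0^T\varphi(g_t\Delta_\vartheta)\,\mathrm dt\to\int_X\varphi\,\mathrm d\mu$. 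As $M_0\cap M_1$ is $\nu$-conull and independent of $\varphi$, this establishes that $\Delta_\vartheta$ is $(D^+,C_\alpha(X))$-generic for $\nu$-a.e.\ $\vartheta$, as required.
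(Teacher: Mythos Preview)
Your overall strategy is coherent and represents a genuinely different route from the paper's. Both arguments begin with Shi's theorem (Theorem~\ref{thmshitypeforks}) to secure $(D^+,C_c(X))$-genericity of $\Delta_\vartheta$ for $\nu$-a.e.\ $\vartheta$; they diverge on how the cusp is controlled.

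The paper does \emph{not} invoke Margulis functions or any moment inequality for $\alpha^p$ with $p>1$. Instead it first proves the Schmidt-type counting result Theorem~\ref{thmschmidtforks} (via Birkhoff's ergodic theorem and a Fubini/approximation argument along the decomposition $\mc M\cong\mc H\times\mc U$), and then uses \emph{that counting result} as the tail-control input. Concretely, Theorem~\ref{thmschmidtforks} yields, for a.e.\ $\vartheta$, the correct limiting time averages for the Siegel transforms $\widehat{\mathbbm 1}_{E_{r,c}}$ and $\widehat{\mathbbm 1}_{F_{r,c}}$ (Corollaries~\ref{corofschi} and~\ref{corofschii}); these in turn give the correct time averages for Siegel transforms of indicators of annuli and then of balls $B_r$ (Lemmas~\ref{lemequidistforannulus} and~\ref{lemequidistforball}). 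A short measure-theoretic bootstrap (Corollary~\ref{corconvmeasii}: if $|\psi|\le\widehat f_0$ and the orbit time averages of $\widehat f_0$ already converge to $\int_X\widehat f_0\,\mathrm d\mu$, then so do those of $\psi$) finishes the argument, since by Lemma~\ref{lemhatofriemannincal} every $\psi\in C_\alpha(X)$ is dominated by some $C\cdot\widehat{\mathbbm 1}_{B_r}$. So the logical flow in the paper is the reverse of yours: Schmidt $\Rightarrow$ Theorem~\ref{thmcalgenofDevth}, rather than moment bounds $\Rightarrow$ Theorem~\ref{thmcalgenofDevth} $\Rightarrow$ Schmidt.

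What each route buys: the paper's argument is elementary once the counting theorem is available---no exponent $p>1$, no iterated contraction, and nothing about the Eskin--Margulis--Mozes machinery needs to be re-verified over $K_S$. Your route is more modular (it would decouple the equidistribution statement from the Diophantine application), but your Step~2 is, as you yourself flag, only a sketch. The iterated contraction gives $\sup_{t\ge 0}\int_B\alpha^p(g_t u(\vartheta)\Delta)\,\mathrm d\nu(\vartheta)<\infty$, a uniform-in-$t$ bound \emph{after} averaging in $\vartheta$; upgrading this to $\limsup_T\frac{1}{T}\int_0^T\alpha^p(g_t\Delta_\vartheta)\,\mathrm dt<\infty$ for $\nu$-a.e.\ fixed $\vartheta$ is not automatic. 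A naive Borel--Cantelli from Markov's inequality only gives $\alpha^p(g_n\Delta_\vartheta)\le n^{1+\varepsilon}$ eventually, and $\frac{1}{N}\sum_{n\le N}n^{1+\varepsilon}$ diverges; one needs a genuine maximal inequality over the $u$-fibres, which you have not supplied. This is precisely the technical work that the paper's Schmidt-theorem detour sidesteps.
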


\noindent As mentioned previously, this result (for $\mathbb{Q}$) answers a question of Kleinbock, Shi and Weiss~\cite{kleinbockshiweiss17} (cf.~\cite{kleinbockshiweiss17} page 861, following Theorem 1.3).
Finally, we have the weighted, number field, spiraling theorem.

\begin{theorem}\label{thmequidistonsphereforksk}
    Let $c > 0$ and measurable subsets $A \subseteq \mathbb{S}^{km-1}, B \subseteq \mathbb{S}^{kn-1}$ with boundaries of measure zero be given.
    Then for a.e.\ $\vartheta \in M$, as $T \to \infty$, the number of solutions $(\vv{\bm{p}}, \vv{\bm{q}}) \in \Delta$ to~\eqref{eqdioinksAB} has the same asymptotic growth as the volume of the set $E_{T, c}(A, B)$, i.e.,
    \[
        N_{T, c}(A, B; \Delta_\vartheta) = \#(E_{T, c}(A, B; \Delta_\vartheta)) \thicksim |E_{T, c}(A, B)| \ \text{as} \ T \to \infty.
    \]
    In particular
    \[
        N_{T, c}(A, B; \vartheta) = \#(E_{T, c}(A, B; \Lambda_\vartheta)) \thicksim |E_{T, c}(A, B)| \ \text{as} \ T \to \infty.
    \]
\end{theorem}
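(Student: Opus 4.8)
The strategy is to realize the counting function $N_{T,c}(A,B;\Delta_\vartheta) = \#(E_{T,c}(A,B;\Delta_\vartheta))$ as an ergodic average of a Siegel transform along the flow $\{g_t\}$, and then to apply Theorem~\ref{thmcalgenofDevth}. First I would note that $g_t$ contracts the $\vv{\bm x}$-coordinates at weighted rate $\bm a$ and expands the $\vv{\bm y}$-coordinates at weighted rate $\bm b$, precisely matching the weighted quasi-norms $\norm{\cdot}_{\bm a}$ and $\norm{\cdot}_{\bm b}$; consequently $g_t$ acts on $K_S^m\times K_S^n$ so that the region $E_{T,c}(A,B)$ is, up to the familiar integer-translation bookkeeping, the union over $t\in[0,T]$ of $g_{-t}$-images of a fixed bounded ``window'' region $\Omega = E_{1,c}(A,B)$ (or a dilate thereof by a bounded factor), because the condition $1\le\norm{\vv{\bm y}}_{\bm b}<e^T$ slices the $\vv{\bm y}$-radial direction into unit-length pieces under the flow, while $\norm{\vv{\bm x}}_{\bm a}\norm{\vv{\bm y}}_{\bm b}<c$ and the angular conditions $\pi_{\bm a}(\vv{\bm x})\in A$, $\pi_{\bm b}(\vv{\bm y})\in B$ are flow-invariant. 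Thus, writing $\chi$ for the indicator of $\Omega$, one gets
\[
    \#(E_{T,c}(A,B;\Delta_\vartheta)) = \int_0^T \widehat{\chi}(g_t \Delta_\vartheta)\,\mathrm d t + O(1),
\]
at least after approximating $\chi$ above and below by Riemann integrable functions to handle the boundary — which is where the hypothesis that $\partial A,\partial B$ have measure zero enters.

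Next I would invoke the key structural facts already established: $\widehat{\chi} \in C_\alpha(X)$ (the paper asserts $C_\alpha(X)$ contains Siegel transforms of Riemann integrable functions, and condition~\ref{cal1} holds since $\partial\Omega$ has $\lambda$-measure zero by the assumption on $A,B$), and, by Theorem~\ref{thmcalgenofDevth}, for a.e.\ $\vartheta\in M$ the point $\Delta_\vartheta$ is $(D^+,C_\alpha(X))$-generic. Applying the genericity relation~\eqref{eqdvphgen} to $\varphi=\widehat{\chi}$ gives
\[
    \frac{1}{T}\int_0^T \widehat{\chi}(g_t\Delta_\vartheta)\,\mathrm d t \longrightarrow \int_X \widehat{\chi}\,\mathrm d\mu = \int_{K_S^d}\chi\,\mathrm d\lambda = |\Omega| = |E_{1,c}(A,B)|,
\]
using the Siegel integral formula in the middle step. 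Since $|E_{T,c}(A,B)| = T\cdot|E_{1,c}(A,B)|$ (both equal $T|E_{1,c}|\vol(A)\vol(B)$ by the volume computation recorded before the theorem), dividing through and combining with the $O(1)$ error yields $\#(E_{T,c}(A,B;\Delta_\vartheta)) \sim |E_{T,c}(A,B)|$, which is the claim. The ``in particular'' statement for $\vartheta$ itself follows by taking $\Delta = e\Gamma$, so $\Delta_\vartheta = \Lambda_\vartheta$.

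The main obstacle is making the reduction in the first paragraph fully rigorous, i.e.\ identifying the count over the growing region $E_{T,c}(A,B)$ with a clean Birkhoff average of a \emph{single} fixed Siegel transform up to an $O(1)$ error that is uniform enough to be absorbed after dividing by $T$. Two points need care here: (i) the ``fundamental domain'' issue — points of $\Lambda\setminus\{0\}$ lying on the bounding slices $\norm{\vv{\bm y}}_{\bm b}=1$ or $=e^T$ must not be double-counted or dropped, and one must check that for a.e.\ $\vartheta$ no lattice point sits exactly on these measure-zero hypersurfaces, or alternatively use a sandwich of slightly enlarged/shrunk windows $\Omega_{\pm\varepsilon}$ and let $\varepsilon\to0$; (ii) the growth control — although $\widehat\chi$ is unbounded, membership in $C_\alpha(X)$ via~\ref{cal2} together with the genericity of $\Delta_\vartheta$ is exactly what licenses passing the limit through, so one should be explicit that the $\varepsilon$-approximating windows $\Omega_{\pm\varepsilon}$ also have Siegel transforms in $C_\alpha(X)$ with $\int_X\widehat{\chi_{\Omega_{\pm\varepsilon}}}\,\mathrm d\mu\to|\Omega|$, giving the desired squeeze. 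This is the same mechanism used in~\cite{kleinbockshiweiss17}; the novelty is purely that everything is carried out over $K_S^d$ rather than $\mathbb R^d$, and the weighted quasi-norms are engineered so the flow $\{g_t\}$ interacts with them exactly as in the classical case.
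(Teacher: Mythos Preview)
Your proposal is correct and follows essentially the same route as the paper: reduce the count to a Birkhoff average of the Siegel transform of the indicator of a fixed window $E_{r,c}(A,B)$, invoke Theorem~\ref{thmcalgenofDevth} to evaluate the average, and then use the Siegel integral formula together with the linear scaling $|E_{T,c}(A,B)| = \tfrac{T}{r}|E_{r,c}(A,B)|$. Two small clarifications: the paper's inequality~\eqref{eqsandwichequidistfhat3} is precisely the rigorous form of your ``$+\,O(1)$'' identity, so your obstacle~(i) is already dealt with there; and your $\varepsilon$-sandwich in obstacle~(ii) is unnecessary, since the hypothesis on $\partial A,\partial B$ makes $\chi = \mathbbm 1_{E_{1,c}(A,B)}$ itself Riemann integrable, hence $\widehat\chi\in C_\alpha(X)$ directly by Lemma~\ref{lemhatofriemannincal}.
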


Note that the distribution of approximates from a different, probabilistic point of view has been undertaken in~\cite{athreyaghosh18}.
It would be interesting to investigate these in the number field context.
Moreover, these questions can also be investigated in the function field context (cf.~\cite{athreyaghoshprasad12, ghoshroyals15, kwonlim18}).


\section{Proof of Theorem~\ref{thmschmidtforkslattice}}\label{secprfschkslat}

We will make use of Siegel's mean value theorem for number fields, which is a theorem about the average number of lattice points in given subset of $K_S^{d}$.
The result below follows from a more general result due to A.\ Weil~\cite{weil46}.

\begin{theorem}[Weil \cite{weil46}]\label{thmsmvtforks}
    For $f \in L^1(K_S^{d}, \lambda)$,
    \begin{equation}\label{eqsmvtforks}
        \int_{X}\widehat{f} \, \mathrm{d}\mu = \int_{K_S^{d}} f \, \mathrm{d}\lambda.
    \end{equation}
\end{theorem}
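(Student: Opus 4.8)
The plan is to prove \eqref{eqsmvtforks} first for compactly supported continuous test functions by a soft ``uniqueness of the invariant measure'' argument, and then to extend to $L^1$ by continuity. So I would begin with the linear functional
\[
    I(f) := \int_X \widehat f\,\mathrm d\mu, \qquad f \in C_c(K_S^d)\ \bigl(\cong C_c(\mathbb R^{dk})\bigr).
\]
The first point is that $I$ is well defined and finite: if $f\in C_c(K_S^d)$ then $|f|\le M\,\mathbbm 1_B$ for some Euclidean ball $B$, so $|\widehat f|\le M\,\widehat{\mathbbm 1_B}$, and combining the pointwise bound $\widehat{\mathbbm 1_B}\le C_B\,\alpha$ (valid for Siegel transforms of Riemann integrable functions, to be established independently of \eqref{eqsmvtforks}) with the integrability $\alpha\in L^1(X,\mu)$ of the Eskin--Margulis--Mozes height function (see \cite{eskinmargulismozes98, kleinbockshiweiss17}) gives $\widehat f\in L^1(X,\mu)$. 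Thus $I$ is a positive linear functional on $C_c(K_S^d)$.

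Next I would use $G$-equivariance. Since $\widehat{f\circ g^{-1}}(\Lambda)=\widehat f(g^{-1}\Lambda)$ and $\mu$ is $G$-invariant, $I$ is invariant under the linear action of $G=\mathrm{SL}_d(K_S)$ on $K_S^d$, so by the Riesz representation theorem it is given by a $G$-invariant Radon measure $\sigma$ on $K_S^d\smallsetminus\{0\}$. Here $d\ge 2$ is used: $G$ acts transitively on the open conull subset $V:=\prod_{v\in S}(K_v^d\smallsetminus\{0\})$, with unimodular stabilizers, so any $G$-invariant Radon measure supported on $V$ is a constant multiple of $\lambda|_V=\lambda$. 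To see that $\sigma$ is supported on $V$ up to null sets I would show $\sigma\ll\lambda$: for a $\lambda$-null set $N$, writing $\int_X\widehat{\mathbbm 1_N}\,\mathrm d\mu$ as an integral over a fundamental domain for $\Gamma$ in $G$ and interchanging sum and integral (Tonelli), each summand is bounded by $\mu_G(\{g\in G: g\iota_S(u)\in N\})$ for $u\in\mathcal O^d\smallsetminus\{0\}$, and this vanishes because $g\mapsto g\iota_S(u)$ maps $G$ onto $V$ and pushes $\mu_G$ to a multiple of the invariant measure on $V$, hence pulls back $\lambda$-null sets to $\mu_G$-null sets. Consequently $\sigma$ is $G$-invariant and absolutely continuous with respect to $\lambda$, so by ergodicity of the $G$-action on $(K_S^d,\lambda)$ its density is constant and $\sigma=c\,\lambda$ for some $c>0$ (positivity is clear, since $c=0$ would force $\widehat f\equiv 0$ $\mu$-a.e.\ for every $f$).

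To identify $c=1$ I would test against $f_R=\mathbbm 1_{B_R}$, the indicator of the Euclidean ball of radius $R$ in $\mathbb R^{dk}$, using monotone limits to pass from $C_c$ to $f_R$. Since $\widehat{f_R}(\Lambda)=\#(\Lambda\cap B_R)-1$, the relation $\sigma=c\,\lambda$ reads $c\cdot\mathrm{vol}(B_R)=\int_X\bigl(\#(\Lambda\cap B_R)-1\bigr)\,\mathrm d\mu(\Lambda)$; dividing by $\mathrm{vol}(B_R)$ and letting $R\to\infty$ gives $c=\lim_{R\to\infty}\int_X \tfrac{\#(\Lambda\cap B_R)}{\mathrm{vol}(B_R)}\,\mathrm d\mu$. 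Every $\Lambda\in X$ is a unimodular lattice in $\mathbb R^{dk}$, so $\#(\Lambda\cap B_R)/\mathrm{vol}(B_R)\to 1$ pointwise by the classical lattice-point count, and for $R\ge 1$ Minkowski's second theorem together with the definition of $\alpha$ gives the uniform domination $\#(\Lambda\cap B_R)/\mathrm{vol}(B_R)\le C\,\alpha(\Lambda)$ with $C\alpha\in L^1(X,\mu)$; dominated convergence then yields $c=\mu(X)=1$. Finally, the bound $\|\widehat f\|_{L^1(X)}\le\int_X\widehat{|f|}\,\mathrm d\mu=\|f\|_{L^1(\lambda)}$ for $f\in C_c(K_S^d)$ shows that $f\mapsto\widehat f$ is bounded from $L^1(K_S^d,\lambda)$ to $L^1(X,\mu)$; since both sides of \eqref{eqsmvtforks} are $L^1$-continuous and agree on the dense subspace $C_c(K_S^d)$, a routine approximation argument (identifying the extension with the pointwise Siegel transform a.e.) finishes the proof for all $f\in L^1(K_S^d,\lambda)$.

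The main obstacle is the finiteness input: one needs $\alpha\in L^1(X,\mu)$ (equivalently, $\int_X\widehat{\mathbbm 1_B}\,\mathrm d\mu<\infty$), which ultimately rests on reduction theory --- Siegel sets and the Borel--Harish-Chandra finiteness of covolume for $\mathrm{SL}_d(\mathcal O)$ in $\mathrm{SL}_d(K_S)$ --- and one may either quote this or prove the integrability of $\widehat{\mathbbm 1_B}$ directly from a Siegel-domain bound on $\#(\Lambda\cap B)$. A secondary delicate point is that $\sigma$ must carry no mass on the lower-dimensional $G$-orbits in $K_S^d$ on which some archimedean component vanishes; the absolute-continuity step above takes care of this cleanly, but one should note that these orbits do support their own $G$-invariant Radon measures, so invariance alone is not enough.
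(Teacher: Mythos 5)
The paper does not prove this statement; it records only that it is a special case of the general mean value theorem of A.~Weil~\cite{weil46}, so there is no argument in the paper to compare against. Your proposal is a correct, self-contained proof along classical Siegel--Weil lines (a positive $G$-invariant linear functional on $C_c(K_S^{d})$, identified by uniqueness of the invariant measure on the open orbit, and normalized by a lattice-point count), which is in essence what Weil's general argument specializes to here.

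One step should be tightened. To rule out mass on the degenerate orbits you unfold $\int_X\widehat{\mathbbm{1}}_N\,\mathrm d\mu$ over a fundamental domain and conclude that the Radon measure $\sigma$ furnished by Riesz representation satisfies $\sigma\ll\lambda$; but $\sigma$ is a priori only known to satisfy $\sigma(A)=\int_X\widehat{\mathbbm{1}}_A\,\mathrm d\mu$ for $A$ open (monotone convergence) or $A$ compact (dominated convergence, using $\widehat{\mathbbm{1}}_{B_r}\le C\alpha\in L^1(X,\mu)$), not for an arbitrary Borel null set $A$, and justifying that identity for general $A$ is essentially the content of the theorem. The gap is easily closed via inner regularity, since the complement of the open orbit $V:=\prod_{v\in S}(K_v^{d}\smallsetminus\{0\})$ is closed and it therefore suffices to treat compact $K\subseteq K_S^{d}\smallsetminus V$; and in this setting no unfolding is needed at all. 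For nonzero $u\in\mathcal{O}^{d}$ each archimedean embedding $\iota_v$ is injective, so $\iota_S(u)\in V$, and since $G$ preserves $V$, every nonzero point of every lattice $g\iota_S(\mathcal{O}^{d})$ already lies in $V$. Hence $\widehat{\mathbbm{1}}_K\equiv 0$ for such $K$, dominated convergence along a decreasing sequence of open neighborhoods of $K$ gives $\sigma(K)=0$, and inner regularity yields $\sigma(K_S^{d}\smallsetminus V)=0$. The remaining inputs you flag, namely $\alpha\in L^1(X,\mu)$ and $\widehat{\mathbbm{1}}_{B_r}\le C\alpha$ established independently of~\eqref{eqsmvtforks}, are indeed what makes the functional finite and drives the dominated-convergence steps; and the final passage from $C_c(K_S^{d})$ to $L^1(K_S^{d},\lambda)$ is safest carried out through monotone and dominated convergence on indicator functions rather than a bare density/BLT argument, so that the extension visibly coincides with the pointwise Siegel transform.
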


Following~\cite{kleinbockshiweiss17}, we now deduce some estimates which we will use in the proof of Theorems~\ref{thmschmidtforks} --~\ref{thmequidistonsphereforksk}.
For $\vv{\bm{z}} = (\vv{\bm{x}}, \vv{\bm{y}}) \in K_S^{m} \times K_S^{n} = K_S^{d}$, write $g_t\vv{\bm{z}} = (\vv{\bm{x}}_t, \vv{\bm{y}}_t)$.
Then we have
\[
    \norm{\vv{\bm{x}}_t}_{\bm{a}} = e^t \norm{\vv{\bm{x}}}_{\bm{a}} \ \text{and} \ \norm{\vv{\bm{y}}_t}_{\bm{b}} = e^{-t} \norm{\vv{\bm{y}}}_{\bm{b}}.
\]
Let $f_{r, c} = \mathbbm{1}_{E_{r, c}}$ be the characteristic function of $E_{r, c}$, then we have
\[
    f_{r, c}(g_t \vv{\bm{z}}) =
    \begin{dcases}
        1 & \text{if} \ e^t \leq \norm{\vv{\bm{y}}}_{\bm{b}} < e^{t + r} \\
        0 & \text{otherwise.} \ 
    \end{dcases}
\]
Therefore
\begin{gather*}
    \vv{\bm{z}} \in E_{T, c} \implies |\{t \in [0, T] : g_t \vv{\bm{z}} \in E_{r, c}\}| \leq r, \\
    \vv{\bm{z}} \in E_{T, c} \smallsetminus E_{r, c} \implies |\{t \in [0, T] : g_t \vv{\bm{z}} \in E_{r, c}\}| =  r, \\
\end{gather*}
and
\begin{gather*}
    g_t \vv{\bm{z}} \in E_{r, c} \ \text{for some} \ t \in [0, T] \implies \vv{\bm{z}} \in E_{T+r, c}.
\end{gather*}
Using~\eqref{eqsiegeltransform} and changing the order of summation and integration it follows that for any $\Lambda \in X$ and any $T > r$
\begin{equation}\label{eqsandwichequidistfhat1}
    \#(\Lambda \cap (E_{T, c} \smallsetminus E_{r, c})) \leq \frac{1}{r} \int_{0}^{T} \widehat{f}_{r, c}(g_t \Lambda) \, \mathrm{d}t \leq \#(\Lambda \cap E_{T+r, c}).
\end{equation}
From~\eqref{eqsandwichequidistfhat1} it follows that
\begin{equation}\label{eqsandwichequidistfhat2}
    \frac{1}{r} \int_{0}^{T-r} \widehat{f}_{r, c}(g_t \Lambda) \, \mathrm{d}t \leq \#(E_{T, c}(\Lambda)) \leq \frac{1}{r} \int_{0}^{T} \widehat{f}_{r, c}(g_t \Lambda) \, \mathrm{d}t + \#(E_{r, c}(\Lambda)).
\end{equation}

Similarly, let $f_{A, B, r, c} = \mathbbm{1}_{E_{r, c}(A, B)}$ be the characteristic function of $E_{r, c}(A, B)$.
Then by similar calculations we get that for any $\Lambda \in X$
\begin{equation}\label{eqsandwichequidistfhat3}
    \frac{1}{r} \int_{0}^{T-r} \widehat{f}_{A, B, r, c}(g_t \Lambda) \, \mathrm{d}t \leq \#(E_{T, c}(A, B; \Lambda)) \leq \frac{1}{r} \int_{0}^{T} \widehat{f}_{A, B, r, c}(g_t \Lambda) \, \mathrm{d}t + \#(E_{r, c}(A, B; \Lambda)).
\end{equation}

By Moore's ergodicity theorem, the action of $\{g_t\}$ on $X$ is ergodic with respect to the Haar measure $\mu$. Further Theorem~\ref{thmsmvtforks} (Siegel's mean value theorem), implies that $\widehat{f}_{r, c} \in L^1(X)$.
We may therefore apply Birkhoff's ergodic theorem (stated below) to $\widehat{f}_{r, c}$.
\begin{theorem}[Birkhoff ergodic theorem]\label{thmbirkhoffforks}
    Let ${\{g_t\}}_{t>0}$ be an ergodic measure-preserving action on a probability space $(\Omega, \mu)$ and $f \in L^1(\Omega)$.
    Then for almost every $x \in \Omega$, we have that
    \[
        \lim_{T \to \infty} \frac{1}{T} \int_{0}^{T} f(g_t x) \, \mathrm{d}t = \int_{\Omega} f \, \mathrm{d}\mu.
    \]
\end{theorem}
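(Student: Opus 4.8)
The plan is to deduce this continuous-time statement from the classical discrete-time Birkhoff pointwise ergodic theorem, applied to the time-one map $g_1$, and then to pin down the limit using ergodicity of the whole flow. I assume throughout, as is automatic for the smooth flows on homogeneous spaces considered in this paper, that the action map $(t,x) \mapsto g_t x$ is jointly measurable, so that $t \mapsto f(g_t x)$ is measurable and all integrals below are defined.

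\textbf{Step 1: the discrete-time theorem.} First I would invoke (or reprove) the Birkhoff theorem for a single measure-preserving transformation $\tau$ of $(\Omega,\mu)$: for $h \in L^1(\Omega)$ the averages $\frac1N \sum_{j=0}^{N-1} h(\tau^j x)$ converge $\mu$-a.e.\ to a $\tau$-invariant $\bar h \in L^1(\Omega)$ with $\int_\Omega \bar h \, \mathrm{d}\mu = \int_\Omega h \, \mathrm{d}\mu$. The one nontrivial ingredient is the maximal ergodic inequality $\int_{\{\sup_N S_N > 0\}} h \, \mathrm{d}\mu \ge 0$, where $S_N = \sum_{j=0}^{N-1} h\circ\tau^j$, which I would prove by Garsia's argument: setting $M_N = \max_{0 \le k \le N} S_k \ge 0$ one has $S_k \le h + M_N\circ\tau$ for $1 \le k \le N$, hence $h \ge M_N - M_N\circ\tau$ on $\{M_N > 0\}$; integrating there and using $\int M_N\circ\tau = \int M_N$ together with $M_N\circ\tau \ge 0$ gives $\int_{\{M_N>0\}} h \, \mathrm{d}\mu \ge 0$, and one lets $N \to \infty$. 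Pointwise convergence then follows by applying this inequality to $h - \beta$ and to $\alpha - h$ on the $\tau$-invariant set $\{\liminf < \alpha < \beta < \limsup\}$ for rationals $\alpha < \beta$, forcing that set to be null; that the limit lies in $L^1$ with the same integral as $h$ is a standard truncation argument. (One may of course simply cite this as the classical result.)

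\textbf{Step 2: from the map to the flow, and identifying the limit.} Put $F(x) := \int_0^1 f(g_s x)\,\mathrm{d}s$ and $G(x) := \int_0^1 |f(g_s x)|\,\mathrm{d}s$; by Fubini and $\{g_t\}$-invariance of $\mu$ both lie in $L^1(\Omega)$. Apply Step 1 to $\tau = g_1$ and to $F$ and $G$. Since $\sum_{j=0}^{N-1} F(g_1^j x) = \int_0^N f(g_t x)\,\mathrm{d}t$, we get $\frac1N \int_0^N f(g_t x)\,\mathrm{d}t \to \bar F(x)$ a.e., with $\bar F$ being $g_1$-invariant. For general $T$, with $N = \lfloor T\rfloor$, split $\int_0^T = \int_0^N + \int_N^T$: the first piece gives $\frac{N}{T}\cdot\frac1N\int_0^N f(g_t x)\,\mathrm{d}t \to \bar F(x)$, while $\bigl|\frac1T\int_N^T f(g_t x)\,\mathrm{d}t\bigr| \le \frac1N G(g_1^N x) \to 0$ a.e., since $\frac1N\sum_{j=0}^{N-1} G(g_1^j x)$ converges by Step 1 and convergence of the Cesàro averages of a nonnegative sequence $(a_j)$ forces $a_N/N \to 0$. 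Hence $\frac1T\int_0^T f(g_t x)\,\mathrm{d}t \to \bar F(x)$ a.e. Wherever $\bar F(x)$ exists it is unchanged on replacing $x$ by $g_s x$, because this replaces $\int_0^T f(g_t x)\,\mathrm{d}t$ by $\int_s^{T+s} f(g_t x)\,\mathrm{d}t$, which differs from $\int_0^T$ by at most $\int_0^s |f(g_t x)|\,\mathrm{d}t + \int_T^{T+s}|f(g_t x)|\,\mathrm{d}t$; dividing by $T$, the first term is a fixed finite quantity and the second is a bounded number of terms $G(g_1^\ell x)$ with $\ell \sim T$, each $o(T)$ as above. Thus $\bar F$ is flow-invariant in $L^1(\Omega)$, so by ergodicity of $\{g_t\}$ it is $\mu$-a.e.\ a constant, and $\int_\Omega \bar F\,\mathrm{d}\mu = \int_\Omega F\,\mathrm{d}\mu = \int_0^1\!\int_\Omega f(g_s x)\,\mathrm{d}\mu(x)\,\mathrm{d}s = \int_\Omega f\,\mathrm{d}\mu$ by Step 1 and measure-preservation; hence $\bar F \equiv \int_\Omega f\,\mathrm{d}\mu$ a.e., as claimed.

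\textbf{Main obstacle.} All of the substance lies in Step 1 — the maximal ergodic inequality and the derivation of pointwise convergence from it; Step 2 is essentially bookkeeping. The one point there that genuinely needs care is that the time-one map $g_1$ need not be ergodic even though the flow $\{g_t\}$ is, which is precisely why the limit has to be shown flow-invariant before ergodicity can be used, rather than being read off directly from the ergodicity of $g_1$.
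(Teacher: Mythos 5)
The paper does not prove this theorem; it is stated as the classical Birkhoff pointwise ergodic theorem and used as a black box (it is invoked immediately after, together with Moore's ergodicity theorem and the Siegel mean value theorem). Your argument is a correct and standard deduction of the continuous-time statement from the discrete-time one via the time-one map: setting $F(x)=\int_0^1 f(g_s x)\,\mathrm{d}s$ and $G(x)=\int_0^1 |f(g_s x)|\,\mathrm{d}s$, applying the discrete theorem to $\tau=g_1$, controlling the fractional tail $\frac1T\int_{\lfloor T\rfloor}^{T}$ by $G(g_1^N x)/N\to 0$, and then, crucially, proving flow-invariance of the limit before invoking ergodicity of $\{g_t\}$ rather than of $g_1$ alone. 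You correctly identify that last point as the only real subtlety: the time-one map need not be ergodic even when the flow is, so one cannot read the constant off from the discrete theorem directly; your shift estimate $\bigl|\frac1T\int_0^T f(g_t g_s x)\,\mathrm{d}t - \frac1T\int_0^T f(g_t x)\,\mathrm{d}t\bigr|\to 0$ handles it cleanly. I see no gaps; as you note, the substance is all in the maximal ergodic inequality of Step 1, which one would in practice simply cite, exactly as the paper does for the whole theorem.
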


\begin{uremark}
    Using ergodicity of $\{g_t\}$ and the Birkhoff ergodic theorem, we get that for a given $L^1(X)$ function $\varphi$, $\mu$-almost every $\Lambda \in X$ is Birkhoff generic with respect to $\varphi$.
\end{uremark}

Applying Theorem~\ref{thmbirkhoffforks} to~\eqref{eqsandwichequidistfhat2} and using Theorem~\ref{thmsmvtforks}, we see that for almost every $\Lambda \in X$
\begin{equation}\label{eqbirgenofaela}
    \lim_{T \to \infty} \frac{\#(E_{T, c}(\Lambda))}{T} = \lim_{T \to \infty} \frac{1}{Tr} \int_{0}^{T} \widehat{f}_{r, c}(g_t \Lambda) \, \mathrm{d}t = \frac{1}{r} \int_{X} \widehat{f}_{r, c} \, \mathrm{d}\mu = \frac{1}{r} \int_{K_S^{d}} f_{r, c} \, \mathrm{d}\lambda = \frac{1}{r} |E_{r, c}|.
\end{equation}
The volume estimate from \S\ref{ssecdiosch} implies that $r \cdot |E_{T, c}| =  T \cdot |E_{r, c}|$, we have proved Theorem~\ref{thmschmidtforkslattice}.

\section{Proof of Theorem~\ref{thmschmidtforks}}\label{}

We now wish to deduce Theorem~\ref{thmschmidtforks} from Theorem~\ref{thmschmidtforkslattice}.
Define
\[
    \mathcal{M} = {\left\{
            \begin{pmatrix}
                \alpha & \delta  \\
                \beta & \gamma
            \end{pmatrix}
            \in G : \alpha \in \mathrm{GL}_{m}(K_S), \beta \in \mathrm{M}_{n \times m}(K_S), \gamma \in \mathrm{M}_{n}(K_S), \delta \in M\right\}}
\]
Note that $\mathcal{M}$ is open and $G \smallsetminus \mathcal{M}$ has $\mu$-measure zero.
Let
\[
    \mathcal{H} = {\left\{
            \begin{pmatrix}
                \alpha & 0  \\
                \beta & \gamma
            \end{pmatrix}
            \in G : \alpha \in \mathrm{GL}_{m}(K_S), \beta \in \mathrm{M}_{n \times m}(K_S), \gamma \in \mathrm{M}_{n}(K_S)\right\}},
\]
which is a subgroup of $G$.
Let $\nu_{\mathcal{H}}$ denote the left Haar measure on $\mathcal{H}$.

\begin{lemma}\label{lemprodmeasonslkks}
    The map
    \begin{gather*}
        \mathcal{H} \times \mathcal{U} \to \mathcal{M} \\
        {\left(
                \begin{pmatrix}
                    \alpha & 0  \\
                    \beta & \gamma
                \end{pmatrix}
                , u(\vartheta)\right)} \mapsto
        \begin{pmatrix}
            \alpha & 0  \\
            \beta & \gamma
        \end{pmatrix}
        u(\vartheta)
    \end{gather*}
    is a homeomorphism.
\end{lemma}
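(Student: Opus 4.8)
The plan is to exhibit an explicit two-sided inverse and then observe that both the map and its inverse are continuous, so that the claimed bijection is automatically a homeomorphism. The whole statement is the assertion that an ``$LU$-type'' factorization is available on the open dense set $\mathcal{M}$ and depends continuously on the matrix.

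First I would compute the forward map explicitly. For $h = \begin{pmatrix}\alpha & 0 \\ \beta & \gamma\end{pmatrix} \in \mathcal{H}$ and $u(\vartheta) \in \mathcal{U}$, block multiplication gives
\[
    h\, u(\vartheta) = \begin{pmatrix}\alpha & \alpha\vartheta \\ \beta & \beta\vartheta + \gamma\end{pmatrix}.
\]
The upper-left block is $\alpha \in \mathrm{GL}_m(K_S)$, so $h\,u(\vartheta) \in \mathcal{M}$; moreover $h\,u(\vartheta) \in G$ since $h, u(\vartheta) \in G$. Hence the map does take values in $\mathcal{M}$, and it is continuous, being a restriction of matrix multiplication.

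Next I would write down the candidate inverse. Given $g = \begin{pmatrix}\alpha' & \delta' \\ \beta' & \gamma'\end{pmatrix} \in \mathcal{M}$, set
\[
    \vartheta := (\alpha')^{-1}\delta' \in M, \qquad h := g\,u(\vartheta)^{-1} = g\,u(-\vartheta).
\]
Since $\alpha' \in \mathrm{GL}_m(K_S)$ and $\delta' \in M$, we have $\vartheta \in M$, so $u(\vartheta) \in \mathcal{U}$. A direct block computation using $\alpha'\vartheta = \delta'$ and $u(\vartheta)^{-1} = u(-\vartheta)$ yields
\[
    h = \begin{pmatrix}\alpha' & \delta' \\ \beta' & \gamma'\end{pmatrix}\begin{pmatrix}1_m & -\vartheta \\ 0 & 1_n\end{pmatrix} = \begin{pmatrix}\alpha' & 0 \\ \beta' & \gamma' - \beta'\vartheta\end{pmatrix},
\]
which is block lower triangular with invertible upper-left block, and which lies in $G$ because $g \in G$ and $u(-\vartheta) \in G$; therefore $h \in \mathcal{H}$. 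Thus $g \mapsto (h, u(\vartheta))$ is a well-defined map $\mathcal{M} \to \mathcal{H} \times \mathcal{U}$, and it is continuous because inversion on $\mathrm{GL}_m(K_S)$ and matrix multiplication are continuous while $u$ is a homeomorphism onto $\mathcal{U}$.

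Finally I would verify that the two maps are mutually inverse: starting from $(h, u(\vartheta))$, the recipe recovers $\alpha' = \alpha$ and $\delta' = \alpha\vartheta$, whence $(\alpha')^{-1}\delta' = \vartheta$ and $g\,u(-\vartheta) = h$; conversely, starting from $g \in \mathcal{M}$, the element $h\,u(\vartheta)$ with $h$ and $\vartheta$ as defined above equals $g$. This is entirely formal once the two block identities above are in place. I do not expect any genuine obstacle here; the only point deserving a line of care is checking that the inverse actually lands in $\mathcal{H} \times \mathcal{U}$ and not merely in the ambient product of matrix spaces, which is exactly why I emphasize the identity $h = g\,u(-\vartheta) \in G$.
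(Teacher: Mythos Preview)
Your proof is correct and follows essentially the same route as the paper: both construct the explicit inverse by setting $\vartheta = (\alpha')^{-1}\delta'$ and checking that $g\,u(-\vartheta)$ is block lower triangular in $G$. You are in fact slightly more thorough, since you address continuity of both directions explicitly, whereas the paper only verifies bijectivity and leaves continuity (which is immediate from the polynomial/rational nature of the formulas) unstated.
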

\begin{proof}
    For $
    \begin{pmatrix}
        \alpha' & \delta'  \\
        \beta' & \gamma'
    \end{pmatrix}
    \in \mathcal{M}$, let $\alpha = \alpha', \beta = \beta', \vartheta = \alpha'^{-1}\delta'$, and $\gamma = \gamma' - \beta'\alpha'^{-1}\delta'$.
    Then
    \[
        \begin{pmatrix}
            \alpha & 0  \\
            \beta & \gamma
        \end{pmatrix}
        \begin{pmatrix}
            1 & \vartheta  \\
            0 & 1
        \end{pmatrix}
        =
        \begin{pmatrix}
            \alpha' & \delta'  \\
            \beta' & \gamma'
        \end{pmatrix}.
    \]
    Since
    \[
        \det (
        \begin{pmatrix}
            \alpha' & \delta'  \\
            \beta' & \gamma'
        \end{pmatrix}
        \begin{pmatrix}
            1 & \vartheta  \\
            0 & 1
        \end{pmatrix}
        ^{-1}) = 1,
    \]
    we have $\det(\gamma) \neq 0$.

    If an element has two such decompositions, then we have
    \[
        \begin{pmatrix}
            \alpha_1 & 0  \\
            \beta_1 & \gamma_1
        \end{pmatrix}
        ^{-1}
        \begin{pmatrix}
            \alpha_2 & 0  \\
            \beta_2 & \gamma_2
        \end{pmatrix}
        =
        u(\vartheta_1)u(\vartheta_2)^{-1}.
    \]
    Multiplying the matrices we get that $\alpha_1 = \alpha_2, \beta_1 = \beta_2, \gamma_1 = \gamma_2$ and $\vartheta_1 = \vartheta_2$.
\end{proof}

The parametrization from Lemma~\ref{lemprodmeasonslkks} gives us a Haar measure $\nu_{\mathcal{H}} \times \nu$ on $G$, thus it must be a constant multiple of $\mu$.
By normalizing $\nu_{\mathcal{H}}$ appropriately, we may assume that the constant is 1.

Given a subset $B \subseteq G$, define $B^\Delta := \{g\Delta : g \in B\}$.
Let $u(\vartheta') \in \mathcal{U}$, $W_{\mathcal{H}}$ be a open neighborhood in $\mathcal{H}$ around the identity and $W_{\mathcal{U}}$ be a open neighborhood in $\mathcal{U}$ around $u(\vartheta')$ so that $W := W_{\mathcal{H}}W_{\mathcal{U}}$ is a open neighborhood of $u(\vartheta')$ in $G$.

\begin{proposition}\label{propfubinistyle}
    Let $W$ be as in the previous paragraph.
    Then for $\nu_{\mathcal{H}}$-almost every $h \in W_{\mathcal{H}}$, there exists a measurable subset $V_h \subseteq W_{\mathcal{U}}$ such that $\nu(V_h) = \nu(W_{\mathcal{U}})$ and for every $u(\vartheta) \in V_h$, the lattice $h u(\vartheta) \Delta = h \Delta_\vartheta$ is Birkhoff generic with respect to the function $\widehat{f}_{r, c}$.
\end{proposition}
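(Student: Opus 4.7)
\medskip

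\noindent\textbf{Proof plan.} The plan is to reduce the statement to a single application of Fubini's theorem, using the $\mu$-almost everywhere Birkhoff genericity of lattices in $X$ that already emerged in the previous section. First I would produce a single $\mu$-null Borel set $B \subseteq X$ outside which Birkhoff genericity with respect to $\widehat{f}_{r,c}$ holds: Theorem~\ref{thmsmvtforks} applied to the bounded, compactly supported function $f_{r,c}$ gives $\widehat{f}_{r,c} \in L^1(X,\mu)$, and combining Moore's ergodicity theorem (the action of $\{g_t\}$ on $(X,\mu)$ is ergodic) with Theorem~\ref{thmbirkhoffforks} yields such a $B$ with the property that every $\Lambda \in X \smallsetminus B$ satisfies $\frac{1}{T}\int_0^T \widehat{f}_{r,c}(g_t \Lambda)\,\mathrm{d}t \to \int_X \widehat{f}_{r,c}\,\mathrm{d}\mu$.

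Next I would transfer this null set to the group $G$ via the orbit map $\pi_\Delta : G \to X$, $g \mapsto g\Delta$. Since $G$ is unimodular and $\pi_\Delta$ is, up to a fixed right translation by a preimage of $\Delta$, the quotient map $G \to G/\Gamma$, the Haar measure on $G$ locally pushes forward to a constant multiple of the $G$-invariant probability $\mu$ on $X$. Hence the preimage $\widetilde{B} := \pi_\Delta^{-1}(B)$ is locally Haar null; in particular $\mu_G(\widetilde{B}\cap W) = 0$. Because $W_{\mathcal{H}}\subseteq \mathcal{H}$ and $W_{\mathcal{U}}\subseteq \mathcal{U}$, Lemma~\ref{lemprodmeasonslkks} tells us that $W = W_{\mathcal{H}}W_{\mathcal{U}}$ lies inside the open set $\mathcal{M}$ and that $(h,u(\vartheta))\mapsto h\,u(\vartheta)$ is a homeomorphism from $W_{\mathcal{H}}\times W_{\mathcal{U}}$ onto $W$. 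With the normalization of $\nu_{\mathcal{H}}$ already fixed so that $\nu_{\mathcal{H}}\times\nu = \mu_G$ on $\mathcal{M}$, the preimage of $\widetilde{B}\cap W$ in $W_{\mathcal{H}}\times W_{\mathcal{U}}$ is a $(\nu_{\mathcal{H}}\times\nu)$-null set.

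I would finish by invoking Fubini's theorem: for $\nu_{\mathcal{H}}$-almost every $h \in W_{\mathcal{H}}$ the horizontal slice $S_h := \{u(\vartheta)\in W_{\mathcal{U}} : h\,u(\vartheta)\Delta \in B\}$ has $\nu$-measure zero, so setting $V_h := W_{\mathcal{U}}\smallsetminus S_h$ gives $\nu(V_h)=\nu(W_{\mathcal{U}})$, and for every $u(\vartheta)\in V_h$ one has $h\Delta_\vartheta = h\,u(\vartheta)\Delta \in X\smallsetminus B$, which is therefore Birkhoff generic for $\widehat{f}_{r,c}$. No step of this argument is a serious obstacle: the substantive content is already packaged into Siegel's formula, Moore ergodicity, and the Birkhoff ergodic theorem, and what remains is essentially bookkeeping around the identification of the Haar measure on $W \subseteq \mathcal{M}$ with $\nu_{\mathcal{H}}\times\nu$ supplied by Lemma~\ref{lemprodmeasonslkks}.
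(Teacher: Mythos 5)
Your proposal is correct and follows essentially the same route as the paper: both arguments rest on $\mu$-almost-everywhere Birkhoff genericity (via Siegel's formula, Moore ergodicity and the Birkhoff theorem), the identification of the Haar measure on $W = W_{\mathcal{H}}W_{\mathcal{U}}$ with $\nu_{\mathcal{H}}\times\nu$ from Lemma~\ref{lemprodmeasonslkks}, and Fubini's theorem applied to the slices over $W_{\mathcal{H}}$. The only (cosmetic) difference is that you apply Fubini directly to the null set of non-generic points, whereas the paper works with the full-measure set $W_{bg}$ and derives a contradiction from the assumption that the slice condition fails on a positive-measure set of $h$.
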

\begin{proof}
    Since $\mu$-almost every element in $X$ is Birkhoff generic with respect to $\widehat{f}_{r, c}$, there exists a set $W_{bg} \subseteq W$ such that every element in $W_{bg}^\Delta$ is Birkhoff generic with respect to the function $\widehat{f}_{r, c}$ and such that $\mu(W_{bg}) = \mu(W)$.

    For $h \in W_\mathcal{H}$, define $W_{bg, h} := \{u(\vartheta) \in W_{\mathcal{U}} : hu(\vartheta) \in W_{bg}\}$.
    Then Fubini's theorem implies that $W_{bg, h}$ is measurable for almost all $h \in W_{\mathcal{H}}$.
    We claim that $V_h = W_{bg, h}$ works, i.e., $\nu(W_{bg, h}) = \nu(W_{\mathcal{U}})$ for almost every $h \in W_\mathcal{H}$.

    If not, there exists a subset $W_{\mathcal{H}}'$ of $W_{\mathcal{H}}$ of positive $\nu_{\mathcal{H}}$-measure such that, for every element $h \in W_{\mathcal{H}}'$, we have $\nu(W_{bg, h}) < \nu(W_{\mathcal{U}})$.
    Integrating using Fubini's theorem, we have
    \begin{align*}
        \mu(W_{bg}) &= \int_{W_{\mathcal{H}}} \int_{W_{\mathcal{U}}} \mathbbm{1}_{W_{bg}}(hu(\vartheta)) \, \mathrm{d}\nu(\vartheta) \, \mathrm{d}\nu_{\mathcal{H}}(h) \\
        &= \int_{W_{\mathcal{H}}} \int_{W_{\mathcal{U}}} \mathbbm{1}_{W_{\mathcal{H}}}(h) \cdot \mathbbm{1}_{W_{bg, h}}(u(\vartheta)) \, \mathrm{d}\nu(\vartheta) \, \mathrm{d}\nu_{\mathcal{H}}(h) \\
        &= \int_{W_{\mathcal{H}} \smallsetminus W_{\mathcal{H}}'} \int_{W_{\mathcal{U}}} \mathbbm{1}_{W_{\mathcal{H}} \smallsetminus W_{\mathcal{H}}'}(h) \cdot \mathbbm{1}_{W_{bg, h}}(u(\vartheta)) \, \mathrm{d}\nu(\vartheta) \, \mathrm{d}\nu_{\mathcal{H}}(h) \\
        & \hspace{4em} + \int_{W_{\mathcal{H}}'} \int_{W_{\mathcal{U}}} \mathbbm{1}_{W_{\mathcal{H}}'}(h) \cdot \mathbbm{1}_{W_{bg, h}}(u(\vartheta)) \, \mathrm{d}\nu(\vartheta) \, \mathrm{d}\nu_{\mathcal{H}}(h) \\
        &< \int_{W_{\mathcal{H}} \smallsetminus W_{\mathcal{H}}'} \int_{W_{\mathcal{U}}} \mathbbm{1}_{W_{\mathcal{H}} \smallsetminus W_{\mathcal{H}}'}(h) \cdot \mathbbm{1}_{W_{\mathcal{U}}}(u(\vartheta)) \, \mathrm{d}\nu(\vartheta) \, \mathrm{d}\nu_{\mathcal{H}}(h) \\
        & \hspace{4em} + \int_{W_{\mathcal{H}}'} \int_{W_{\mathcal{U}}} \mathbbm{1}_{W_{\mathcal{H}}'}(h) \cdot \mathbbm{1}_{W_{\mathcal{U}}}(u(\vartheta)) \, \mathrm{d}\nu(\vartheta) \, \mathrm{d}\nu_{\mathcal{H}}(h) \\
        &= \int_{W_{\mathcal{H}}} \int_{W_{\mathcal{U}}} \mathbbm{1}_{W_{\mathcal{H}}}(h) \cdot \mathbbm{1}_{W_{\mathcal{U}}}(u(\vartheta)) \, \mathrm{d}\nu(\vartheta) \, \mathrm{d}\nu_{\mathcal{H}}(h) \\
        &= \int_{W_{\mathcal{H}}} \int_{W_{\mathcal{U}}} \mathbbm{1}_{W_{\mathcal{H}}W_{\mathcal{U}}}(hu(\vartheta)) \, \mathrm{d}\nu(\vartheta) \, \mathrm{d}\nu_{\mathcal{H}}(h) \\
        &= \mu(W),
    \end{align*}
    a contradiction.
\end{proof}

In the notation as in Proposition~\ref{propfubinistyle}, let $V = \{h \in W_{\mathcal{H}} : \nu(V_h) = \nu(W_{\mathcal{U}})\}$.
Then $\nu_{\mathcal{H}}(V) = \nu_{\mathcal{H}}(W_{\mathcal{H}})$.
In the next stage we are going to approximate $\Delta_\vartheta$ (for $u(\vartheta) \in W_{\mathcal{U}}$) by certain sequence of Birkhoff generic points $\{\Delta_\ell\}_{\ell \in \mathbb{N}}$ in $W_{bg}^\Delta$ in such a way that the Birkhoff genericity of $\Delta_\ell$ implies the Birkhoff genericity of $\Delta_\vartheta$ for almost all $u(\vartheta) \in W_{\mathcal{U}}$.
\newline

Let $\{\varepsilon_\ell\}_{\ell \in \mathbb{N}} \to 0$ be a sequence of positive reals.
For each $\ell \in \mathbb{N}$ we are going to choose $h_\ell =
\begin{pmatrix}
    \alpha_\ell & 0  \\
    \beta_\ell & \gamma_\ell
\end{pmatrix}
$ from $V$ satisfying the following two conditions:
\renewcommand{\labelenumi}{(\roman{enumi})}
\begin{enumerate}
    \item $h_\ell \to 1_d$ as $\ell \to \infty$.
        For each $h_\ell \ \exists~V_\ell \subseteq W_{\mathcal{U}}$ such that $\nu(V_\ell) = \nu(W_{\mathcal{U}})$ and for all $u(\vartheta) \in V_\ell$ the lattice $h_\ell \Delta_\vartheta$ is Birkhoff generic with respect to $\widehat{f}_{c-\varepsilon_\ell}$ and $\widehat{f}_{c + \varepsilon_\ell}$.

        Let $V_\infty := \bigcap V_\ell$, then $\nu(V_\infty) = \nu(W_{\mathcal{U}})$.
        Fix $u(\vartheta) \in V_\infty$, then $\Delta_\ell = h_\ell \Delta_\vartheta$ is Birkhoff generic with respect to $\widehat{f}_{r, c - \varepsilon_\ell}$ and $\widehat{f}_{r, c + \varepsilon_\ell}$ for all $\ell \in \mathbb{N}$.

        Now we are going to choose the `speed' at which $h_\ell \to 1_d$.

    \item Let $E_c := \{(\vv{\bm{x}}, \vv{\bm{y}}) \in K_S^{m} \times K_S^{n} : \norm{\vv{\bm{x}}}_{\bm{a}} \cdot \norm{\vv{\bm{y}}}_{\bm{b}} < c, 1 \leq \norm{\vv{\bm{y}}}_{\bm{b}}\}$.
        Then $E_c \cap \Delta_\vartheta$ and $h_\ell E_c \cap \Delta_\ell$ naturally correspond to each other.
        We have
        \[
            h_\ell E_c = \{(\alpha_\ell \vv{\bm{x}}, \beta_\ell \vv{\bm{x}} + \gamma_\ell \vv{\bm{y}}) : (\vv{\bm{x}}, \vv{\bm{y}}) \in E_c\}.
        \]

        Since $\vv{\bm{x}}$ is uniformly bounded for $(\vv{\bm{x}}, \vv{\bm{y}}) \in E_c$, we can choose $h_\ell$ so close to $1_d$ that
        \begin{subequations}
            \begin{gather}
                (1 - \varepsilon_\ell')\norm{\vv{\bm{x}}}_{\bm{a}} \cdot \norm{\vv{\bm{y}}}_{\bm{b}} \leq \norm{\alpha_\ell \vv{\bm{x}}}_{\bm{a}} \cdot \norm{\beta_\ell \vv{\bm{x}} + \gamma_\ell \vv{\bm{y}}}_{\bm{b}} \leq (1 + \varepsilon_\ell')\norm{\vv{\bm{x}}}_{\bm{a}} \cdot \norm{\vv{\bm{y}}}_{\bm{b}} \label{eqclosehla}\\
                (1 - \varepsilon_\ell'')\norm{\vv{\bm{y}}}_{\bm{b}} - \varepsilon_\ell''' \leq \norm{\beta_\ell \vv{\bm{x}} + \gamma_\ell \vv{\bm{y}}}_{\bm{b}} \leq (1 + \varepsilon_\ell'')\norm{\vv{\bm{y}}}_{\bm{b}} + \varepsilon_\ell''' \label{eqclosehlb}
            \end{gather}
        \end{subequations}
        where $\varepsilon_\ell', \varepsilon_\ell''$ and $\varepsilon_\ell''' \to 0$ are positive real numbers satisfying $(1 + \varepsilon_\ell')c < c + \varepsilon_\ell$,  $\frac{c - \varepsilon_\ell}{1 - \varepsilon_\ell'} < c$ and $\varepsilon_\ell'', \varepsilon_\ell''' < 1/3$.
\end{enumerate}
We have thus chosen $h_\ell$.

Note that~\eqref{eqclosehla} implies that $h_\ell E_c$ can be approximated from inside by $E_{c - \varepsilon_\ell}$ and from outside by $E_{c + \varepsilon_\ell}$, possibly up to two precompact sets $\mathcal{C}_\ell^1$ and $\mathcal{C}_\ell^2$.
The set $\mathcal{C}_\ell^2$ appears as follows:
For $h_\ell E_c$ there might exist points $(\vv{\bm{x}}, \vv{\bm{y}}) \in E_c$ such that $\norm{\beta_\ell \vv{\bm{x}} + \gamma_\ell \vv{\bm{y}}}_{\bm{b}} < 1$.
We need to exclude these points from $E_c$.
Let $S := \{(\vv{\bm{x}}, \vv{\bm{y}}) \in K_S^{m} \times K_S^{n} : \norm{\vv{\bm{y}}}_{\bm{b}} \leq 1\}$ and
\begin{align*}
    \mathcal{C}_\ell^2 &:= \{(\vv{\bm{x}}, \vv{\bm{y}}) \in E_c : h_\ell(\vv{\bm{x}}, \vv{\bm{y}}) \in S\} \\
    &= h_\ell^{-1} (S) \cap E_c.
\end{align*}
Then~\eqref{eqclosehlb} implies that $\mathcal{C}_\ell^2$ is bounded, and we have
\[
    h_\ell(E_c \smallsetminus \mathcal{C}_\ell^2) \subseteq E_{c + \varepsilon_\ell}.
\]
Similarly let
\begin{align*}
    \mathcal{C}_\ell^1 &:= \{(\vv{\bm{x}}, \vv{\bm{y}}) \in E_{c - \varepsilon_\ell} : h_\ell^{-1}(\vv{\bm{x}}, \vv{\bm{y}}) \in S \cup \mathcal{C}_\ell^2\} \\
    &\subseteq h_\ell(\overline{\mathcal{C}_\ell^2}) \cup (h_\ell(S) \cap E_{c - \varepsilon_\ell}).
\end{align*}
Again~\eqref{eqclosehlb} implies $\mathcal{C}_\ell^1$ is bounded, and we have
\[
    E_{c - \varepsilon_\ell} \smallsetminus \mathcal{C}_\ell^1 \subseteq h_\ell(E_c \smallsetminus \mathcal{C}_\ell^2) \subseteq E_{c + \varepsilon_\ell}.
\]

By similar arguments and using~\eqref{eqclosehlb}, we see that
\[
    E_{T-1, c-\varepsilon_\ell} \smallsetminus \mathcal{C}_\ell^1 \subseteq h_\ell(E_{T, c} \smallsetminus \mathcal{C}_\ell^2) \subseteq E_{T+1, c+\varepsilon_\ell}
\]
for all $\ell$ and $T > 1$.

Therefore
\begin{equation}\label{eqimpineqforuvthgeni}
    \#(E_{T-1, c - \varepsilon_\ell}(\Delta_\ell)) - C_\ell^1 \leq \#(E_{T, c}(\Delta_\vartheta)) - C_\ell^2 \leq \#(E_{T+1, c+\varepsilon_\ell}(\Delta_\ell))
\end{equation}
where $C_\ell^1 := \#(\mathcal{C}_\ell^1 \cap \Delta_\ell), C_\ell^2 := \#(\mathcal{C}_\ell^2 \cap \Delta_\vartheta)$.
Since a precompact set of $K_S^{d}$ can only have a finite number of lattice points, it follows that $C_\ell^1, C_\ell^2 < \infty$.
Consequently, from~\eqref{eqimpineqforuvthgeni} we have that
\[
    \lim_{T \to \infty} \frac{\#(E_{T-1, c-\varepsilon_\ell}(\Delta_\ell))}{T-1} \leq \lim_{T \to \infty} \frac{\#(E_{T, c}(\Delta_\vartheta))}{T} \leq \lim_{T \to \infty} \frac{\#(E_{T+1, c+\varepsilon_\ell}(\Delta_\ell))}{T+1}.
\]
Since $\Delta_\ell$ is Birkhoff generic with respect to $\widehat{f}_{r, c-\varepsilon_\ell'}, \widehat{f}_{r, c+\varepsilon_\ell'}$ for all $\ell$, using~\eqref{eqbirgenofaela} we have that
\[
    \frac{1}{r} |E_{r, c-\varepsilon_\ell}| \leq \lim_{T \to \infty} \frac{\#(E_{T, c}(\Delta_\vartheta)}{T} \leq \frac{1}{r} |E_{r, c+\varepsilon_\ell}|.
\]
Using the volume estimate from \S\ref{ssecdiosch} and letting $\ell \to \infty$ we get that for all $u(\vartheta) \in V_\infty$,
\[
    \lim_{T \to \infty} \frac{\#(E_{T, c}(\Delta_\vartheta))}{T} = \frac{1}{r} |E_{r, c}|.
\]

Since $u(\vartheta') \in \mathcal{U}$ is arbitrary, we have that for $\nu$-almost all $u(\vartheta) \in \mathcal{U}$,
\[
    \lim_{T \to \infty} \frac{\#(E_{T, c}(\Delta_\vartheta))}{T} = \frac{1}{r} |E_{r, c}|.
\]
Since the volume estimate from \S\ref{ssecdiosch} implies that $r \cdot |E_{T, c}| = T \cdot |E_{r, c}|$, we have proved Theorem~\ref{thmschmidtforks}.

\section{Proof of Theorems~\ref{thmcalgenofDevth} and~\ref{thmequidistonsphereforksk}}\label{}

Let us prove some results using Theorem~\ref{thmschmidtforks}.

\begin{corollary}\label{corofschi}
    Let $r, c > 0$.
    Then for almost every $\vartheta \in M$,
    \begin{equation}\label{eqEequidist}
        \lim_{T \to \infty} \frac{1}{T} \int_{0}^{T} \widehat{f}_{r, c}(g_t \Delta_\vartheta) \, \mathrm{d}t = |E_{r, c}|.
    \end{equation}
\end{corollary}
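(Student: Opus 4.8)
The plan is to deduce \eqref{eqEequidist} directly from Theorem~\ref{thmschmidtforks} by feeding it into the sandwich inequality \eqref{eqsandwichequidistfhat2}, which was derived for \emph{every} lattice $\Lambda \in X$ and hence applies to $\Lambda = \Delta_\vartheta$. This is the reverse of the reasoning in \S\ref{secprfschkslat}: there Birkhoff's theorem controlled the time integral $\frac1T\int_0^T\widehat{f}_{r,c}(g_t\Lambda)\,\mathrm{d}t$ and \eqref{eqsandwichequidistfhat2} then propagated this to the counting function $\#(E_{T,c}(\Lambda))$; here we already know the asymptotics of the counting function along the $\nu$-generic orbit, and push it back through \eqref{eqsandwichequidistfhat2} to pin down the time integral.

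Concretely, fix $\vartheta$ in the full-measure set furnished by Theorem~\ref{thmschmidtforks}, so that $\#(E_{T,c}(\Delta_\vartheta)) \thicksim |E_{T,c}|$ as $T\to\infty$; since the volume estimate in \S\ref{ssecdiosch} makes $|E_{T,c}|$ linear in $T$, this says $\#(E_{T,c}(\Delta_\vartheta))/T \to |E_{r,c}|/r$. Write $J(T):=\int_0^T\widehat{f}_{r,c}(g_t\Delta_\vartheta)\,\mathrm{d}t$, which is nondecreasing since the integrand is nonnegative. The right-hand inequality of \eqref{eqsandwichequidistfhat2} gives $\#(E_{T,c}(\Delta_\vartheta)) - \#(E_{r,c}(\Delta_\vartheta)) \le J(T)/r$; as $E_{r,c}$ is bounded, $\#(E_{r,c}(\Delta_\vartheta))$ is a finite constant, so dividing by $T$ and letting $T\to\infty$ yields $\liminf_{T\to\infty} J(T)/T \ge |E_{r,c}|$. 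For the matching upper bound, apply the left-hand inequality of \eqref{eqsandwichequidistfhat2} with $T$ replaced by $T+r$ to get $J(T)/r \le \#(E_{T+r,c}(\Delta_\vartheta))$, whence $J(T)/T \le r\cdot\frac{\#(E_{T+r,c}(\Delta_\vartheta))}{T+r}\cdot\frac{T+r}{T} \to |E_{r,c}|$, so $\limsup_{T\to\infty} J(T)/T \le |E_{r,c}|$. Combining the two bounds gives $\lim_{T\to\infty} J(T)/T = |E_{r,c}|$, which is exactly \eqref{eqEequidist}.

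I do not expect any genuine obstacle here. The one point to handle with care is that $\widehat{f}_{r,c}$ is unbounded on $X$, so one cannot simply argue that $J(T) - J(T-r) = \int_{T-r}^{T}\widehat{f}_{r,c}(g_t\Delta_\vartheta)\,\mathrm{d}t$ is $o(T)$ and thereby compare $\frac1T J(T-r)$ with $\frac1T J(T)$ directly; this is circumvented by exploiting the monotonicity of $J$ and shifting the argument of the counting function by $r$ instead of estimating a difference of integrals. Everything else is routine book-keeping once Theorem~\ref{thmschmidtforks} and the linear-in-$T$ volume formula from \S\ref{ssecdiosch} are in hand.
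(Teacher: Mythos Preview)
Your proposal is correct and essentially identical to the paper's own proof. The paper cites \eqref{eqsandwichequidistfhat1} rather than \eqref{eqsandwichequidistfhat2}, but since \eqref{eqsandwichequidistfhat2} is just a rearrangement of \eqref{eqsandwichequidistfhat1} (shifting $T$ by $r$), the two inequalities you extract---$\#(E_{T,c}(\Delta_\vartheta)) - \#(E_{r,c}(\Delta_\vartheta)) \le \tfrac{1}{r}J(T)$ and $\tfrac{1}{r}J(T) \le \#(E_{T+r,c}(\Delta_\vartheta))$---are exactly the content of \eqref{eqsandwichequidistfhat1}, and the remainder of the argument (divide by $T$, invoke Theorem~\ref{thmschmidtforks} and the linear volume formula) is the same.
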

\begin{proof}
    This result follows from~\eqref{eqsandwichequidistfhat1}, Theorem~\ref{thmschmidtforks} and the volume estimate for $E_{T, c}$ from \S\ref{ssecdiosch}.
\end{proof}

\begin{corollary}\label{corofschii}
    For $r, c > 0$, let
    \[
        F_{r, c} := \{(\vv{\bm{x}}, \vv{\bm{y}}) \in K_S^{m} \times K_S^{n} : \norm{\vv{\bm{x}}}_{\bm{a}} \cdot \norm{\vv{\bm{y}}}_{\bm{b}} < c, 1 \leq \norm{\vv{\bm{x}}}_{\bm{a}} < e^r\}.
    \]
    Then for almost every $\vartheta \in M$,
    \begin{equation}\label{eqFequidist}
        \lim_{T \to \infty} \frac{1}{T} \int_{0}^{T} \widehat{\mathbbm{1}}_{F_{r, c}}(g_t \Delta_\vartheta) \, \mathrm{d}t = |F_{r, c}|.
    \end{equation}
\end{corollary}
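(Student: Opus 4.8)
The plan is to argue exactly as in Corollary~\ref{corofschi} (the case of $E_{r,c}$), the crucial simplification being that $F_{r,c}$ is a \emph{bounded} subset of $K_S^{d}$: on $F_{r,c}$ one has $\norm{\vv{\bm x}}_{\bm a}<e^{r}$, hence $\norm{\vv{\bm y}}_{\bm b}<c$ by the product condition. So $\mathbbm 1_{F_{r,c}}$ is Riemann integrable and, by Siegel's mean value theorem (Theorem~\ref{thmsmvtforks}), $\widehat{\mathbbm 1}_{F_{r,c}}\in L^{1}(X)$ with $\int_{X}\widehat{\mathbbm 1}_{F_{r,c}}\,\mathrm d\mu=|F_{r,c}|$. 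A volume computation symmetric to the one recalled in \S\ref{ssecdiosch} (the two factors $K_S^{m}$, $K_S^{n}$ play interchangeable roles, each weight sum being $1$) gives $|F_{r,c}|=\kappa\,cr=|E_{r,c}|$, where $\kappa:=\bigl(2^{\#(S_\mathbb R)}\pi^{\#(S_\mathbb C)}\varpi^{k}\bigr)^{d}$. First I would record the elementary identity obtained just as in the derivation of~\eqref{eqsandwichequidistfhat1}, reading off the time spent in $F_{r,c}$ under $\{g_t\}$: since $\norm{(g_t\vv{\bm z})_x}_{\bm a}=e^{t}\norm{\vv{\bm z}_x}_{\bm a}$, a point $\vv{\bm z}=(\vv{\bm x},\vv{\bm y})$ with $\norm{\vv{\bm x}}_{\bm a}\norm{\vv{\bm y}}_{\bm b}<c$ lies in $F_{r,c}$ exactly for $t\in\bigl[-\log\norm{\vv{\bm x}}_{\bm a},\,r-\log\norm{\vv{\bm x}}_{\bm a}\bigr)$, whence for every $\Lambda$ and $T>r$
\[
    \int_{0}^{T}\widehat{\mathbbm 1}_{F_{r,c}}(g_t\Lambda)\,\mathrm dt=\sum_{\substack{\vv{\bm v}=(\vv{\bm x},\vv{\bm y})\in\Lambda\setminus\{0\}\\ \norm{\vv{\bm x}}_{\bm a}\norm{\vv{\bm y}}_{\bm b}<c}}\Phi_{T}\bigl(\log\norm{\vv{\bm x}}_{\bm a}\bigr),
\]
where $\Phi_{T}$ is the continuous $1$-Lipschitz ``trapezoid'' supported on $(-T,r)$ which equals $r$ on $[r-T,0)$; in particular only the $\vv{\bm v}$ with $\norm{\vv{\bm x}}_{\bm a}<e^{r}$ contribute.

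Next, by Moore ergodicity and Birkhoff's ergodic theorem (Theorem~\ref{thmbirkhoffforks}) applied to the $L^{1}$ function $\widehat{\mathbbm 1}_{F_{s,c'}}$, $\mu$-a.e.\ $\Lambda\in X$ is $(D^{+},\widehat{\mathbbm 1}_{F_{s,c'}})$-generic; intersecting over a countable dense set of pairs $(s,c')$, $\mu$-a.e.\ $\Lambda$ is simultaneously generic for all of them. I would then transport this to a.e.\ $\Delta_\vartheta$ by the argument in the proof of Theorem~\ref{thmschmidtforks}. Proposition~\ref{propfubinistyle} applies verbatim (it uses only Fubini and the factorization $\mathcal M=\mathcal H\mathcal U$), so for $\nu_\mathcal H$-a.e.\ $h\in W_\mathcal H$ and $\nu$-a.e.\ $u(\vartheta)\in W_\mathcal U$ the lattice $hu(\vartheta)\Delta$ is generic for all $\widehat{\mathbbm 1}_{F_{s,c'}}$; choosing $h_\ell=\bigl(\begin{smallmatrix}\alpha_\ell&0\\ \beta_\ell&\gamma_\ell\end{smallmatrix}\bigr)\to 1_{d}$ as there, one gets, for $\nu$-a.e.\ $u(\vartheta)\in W_\mathcal U$, that $\Delta_\ell:=h_\ell\Delta_\vartheta$ is generic for $\widehat{\mathbbm 1}_{F_{r\pm\varepsilon_\ell,\,c\pm\varepsilon_\ell}}$ with suitable $\varepsilon_\ell\to0$. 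In the sums above one may replace $\vv{\bm v}$ by $h_\ell\vv{\bm v}=(\alpha_\ell\vv{\bm x},\,\beta_\ell\vv{\bm x}+\gamma_\ell\vv{\bm y})$ to pass between $\Delta_\vartheta$ and $\Delta_\ell$; since every contributing lattice point has $\norm{\vv{\bm x}}_{\bm a}$ bounded (by $e^{r+1}$, say), the estimates~\eqref{eqclosehla}–\eqref{eqclosehlb} hold with \emph{uniformly} small additive errors and — because $F_{r,c}$ is bounded and carries no lower constraint on $\norm{\vv{\bm y}}_{\bm b}$ — no precompact correction sets $\mathcal C_\ell^{i}$ are required. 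This sandwiches $\int_{0}^{T}\widehat{\mathbbm 1}_{F_{r,c}}(g_t\Delta_\vartheta)\,\mathrm dt$ between $\int_{0}^{T}\widehat{\mathbbm 1}_{F_{r\mp\varepsilon_\ell,\,c\mp\varepsilon_\ell}}(g_t\Delta_\ell)\,\mathrm dt$ up to an error $o_\ell(1)\cdot T$ (from the $1$-Lipschitz shift in the argument of $\Phi_T$, over $O(T)$ contributing terms, the bound $O(T)$ itself following from genericity of $\Delta_\ell$ for, say, $\widehat{\mathbbm 1}_{F_{r+1,c+1}}$).

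Dividing by $T$, letting $T\to\infty$ using genericity of $\Delta_\ell$, and then letting $\ell\to\infty$ (so that $|F_{r\pm\varepsilon_\ell,\,c\pm\varepsilon_\ell}|\to|F_{r,c}|$), one obtains $\lim_{T\to\infty}\frac1T\int_{0}^{T}\widehat{\mathbbm 1}_{F_{r,c}}(g_t\Delta_\vartheta)\,\mathrm dt=|F_{r,c}|$; since $u(\vartheta')\in\mathcal U$ was arbitrary, this holds for $\nu$-a.e.\ $\vartheta$. The delicate point — exactly as in the proof of Theorem~\ref{thmschmidtforks} — is this last transfer step: one must check that passing from $\vv{\bm v}$ to $h_\ell\vv{\bm v}$ moves each contributing lattice point by a controlled amount, uniformly over the orbit, which is precisely what the boundedness of $\norm{\vv{\bm x}}_{\bm a}$ on $F_{r,c}$ (and on the slightly enlarged sets $F_{r\pm\varepsilon_\ell,\,c\pm\varepsilon_\ell}$) secures. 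I expect no other obstacle; in particular, unlike Corollary~\ref{corofschi}, this argument does not even need Theorem~\ref{thmschmidtforks}, only that $\widehat{\mathbbm 1}_{F_{r,c}}\in L^{1}(X)$.
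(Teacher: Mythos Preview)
Your approach is genuinely different from the paper's, and the transfer step contains a gap.

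The paper does \emph{not} redo the $h_\ell\to 1_d$ perturbation argument for $F_{r,c}$. Instead it deduces Corollary~\ref{corofschii} from Corollary~\ref{corofschi} (and hence from Theorem~\ref{thmschmidtforks}) in two pieces: for the upper bound it uses the elementary inequality
\[
    \frac{1}{r}\int_0^T\widehat{\mathbbm 1}_{F_{r,c}}(g_t\Delta_\vartheta)\,\mathrm dt \le \#(E_{T+\ln c,\,c}(\Delta_\vartheta)) + \#(\widetilde F\cap\Delta_\vartheta),
\]
which together with Theorem~\ref{thmschmidtforks} gives~\eqref{eqFequidistls}; for the lower bound it approximates $F_{r,c}$ from inside by finite disjoint unions of sets $E_{r_1,r_2;c_1,c_2}$, for which equidistribution follows from Corollary~\ref{corofschi} via the identity $g_{-r_1}E_{r_2-r_1,c}=E_{r_1,r_2;0,c}$. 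So the delicate perturbation estimates~\eqref{eqclosehla}--\eqref{eqclosehlb} are used \emph{only once}, in the $E$-setting where $\norm{\vv{\bm y}}_{\bm b}\ge 1$.

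In your argument the transfer step does not go through as written. The trapezoid representation is correct, but the contributing $\vv{\bm v}=(\vv{\bm x},\vv{\bm y})\in\Delta_\vartheta$ have $\log\norm{\vv{\bm x}}_{\bm a}\in(-T,r)$, so $\norm{\vv{\bm x}}_{\bm a}$ may be as small as $e^{-T}$: it is \emph{not} the bounded set $F_{r,c}$ that matters here but the (unbounded in $\vv{\bm y}$) orbit region $\{e^{-T}<\norm{\vv{\bm x}}_{\bm a}<e^{r},\ \norm{\vv{\bm x}}_{\bm a}\norm{\vv{\bm y}}_{\bm b}<c\}$. Boundedness of $\vv{\bm x}$ gives only an \emph{additive} estimate $|(\alpha_\ell\vv{\bm x})_{i,v}-x_{i,v}|\le C\delta_\ell$, whereas comparing $\Phi_T(\log\norm{\vv{\bm x}}_{\bm a})$ with $\Phi_T(\log\norm{\alpha_\ell\vv{\bm x}}_{\bm a})$ needs a uniform \emph{multiplicative} bound $\norm{\alpha_\ell\vv{\bm x}}_{\bm a}/\norm{\vv{\bm x}}_{\bm a}\to 1$. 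For unequal weights this fails: if, say, $m=2$ with $a_1\ne a_2$ and $\vv{\bm x}=(x_1,0)$ with $|x_1|$ small, then a small off-diagonal entry in $\alpha_\ell$ can make $\norm{\alpha_\ell\vv{\bm x}}_{\bm a}/\norm{\vv{\bm x}}_{\bm a}$ arbitrarily large. Hence the claimed ``$o_\ell(1)$ per term over $O(T)$ terms'' error bound is not justified, and the sandwich between $\int_0^T\widehat{\mathbbm 1}_{F_{r\mp\varepsilon_\ell,c\mp\varepsilon_\ell}}(g_t\Delta_\ell)\,\mathrm dt$ does not follow. The paper's reduction to $E$-sets sidesteps exactly this difficulty: there the lower bound $\norm{\vv{\bm y}}_{\bm b}\ge 1$ supplies the missing multiplicative control in~\eqref{eqclosehla}--\eqref{eqclosehlb}, and only the product $\norm{\vv{\bm x}}_{\bm a}\norm{\vv{\bm y}}_{\bm b}$ (never $\log\norm{\vv{\bm x}}_{\bm a}$ on its own) needs to be tracked.
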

\begin{proof}
    For every $T > |\ln c|$ and $\vartheta \in M$, similar to~\eqref{eqsandwichequidistfhat1}, we have
    \[
        \frac{1}{r} \int_{0}^{T} \widehat{\mathbbm{1}}_{F_{r, c}} (g_t \Delta_\vartheta) \, \mathrm{d}t \leq \#(E_{T + \ln c, c}(\Delta_\vartheta)) + \#(\widetilde{F} \cap \Delta_\vartheta)
    \]
    where $\widetilde{F} := \{(\vv{\bm{x}}, \vv{\bm{y}}) \in K_S^{m} \times K_S^{n} : \norm{\vv{\bm{x}}}_{\bm{a}} < e^r, \norm{\vv{\bm{y}}}_{\bm{b}} < c\}$.
    For every $\vartheta$, $\#(\widetilde{F} \cap \Delta_\vartheta)$ is a number independent of $T$ and $|F_{r, c}| = |E_{r, c}|$.
    So for $\vartheta \in M$ satisfying~\eqref{eqEequidist} we have
    \begin{equation}\label{eqFequidistls}
        \limsup_{T \to \infty} \frac{1}{T} \int_{0}^{T} \widehat{\mathbbm{1}}_{F_{r, c}}(g_t \Delta_\vartheta) \, \mathrm{d}t \leq |F_{r, c}|.
    \end{equation}

    For $r_1 < r_2$ and $0 \leq c_1 < c_2$ define
    \[
        E_{r_1, r_2; c_1, c_2} := \{(\vv{\bm{x}}, \vv{\bm{y}}) \in K_S^{m} \times K_S^{n} : c_1 \leq \norm{\vv{\bm{x}}}_{\bm{a}} \cdot \norm{\vv{\bm{y}}}_{\bm{b}} < c_2, e^{r_1} \leq \norm{\vv{\bm{y}}} < e^{r_2}\}.
    \]
    Consider
    \[
        \frac{1}{T} \int_{0}^{T} \widehat{\mathbbm{1}}_{E_{r_2 - r_1, c}}(g_{r_1}g_t \Delta_\vartheta) \, \mathrm{d}t
    \]
    On the one hand it is equal to
    \begin{align*}
        & \quad \frac{1}{T} \int_{r_1}^{T+r_1} \widehat{\mathbbm{1}}_{E_{r_2 - r_1}, c}(g_t \Delta_\vartheta) \, \mathrm{d}t \\
        &= \frac{T+r_1}{T} \cdot \frac{1}{T+r_1} \int_{0}^{T+r_1} \widehat{\mathbbm{1}}_{E_{r_2 - r_1, c}}(g_t \Delta_\vartheta) \, \mathrm{d}t - \frac{1}{T} \int_{0}^{r_1} \widehat{\mathbbm{1}}_{E_{r_2 - r_1, c}}(g_t \Delta_\vartheta) \, \mathrm{d}t.
    \end{align*}
    On the other hand, it is equal to
    \begin{align*}
        & \quad \frac{1}{T} \int_{0}^{T} \widehat{\mathbbm{1}}_{g_{-r_1}E_{r_2 - r_1, c}}(g_t \Delta_\vartheta) \, \mathrm{d}t \\
        &= \frac{1}{T} \int_{0}^{T} \widehat{\mathbbm{1}}_{E_{r_1, r_2; 0, c}}(g_t \Delta_\vartheta) \, \mathrm{d}t
    \end{align*}
    since $g_{-r_1}E_{r_2 - r_1, c} = E_{r_1, r_2; 0, c}$.
    Therefore, for almost every $\vartheta \in M$
    \[
        \lim_{T \to \infty} \frac{1}{T} \int_{0}^{T} \widehat{\mathbbm{1}}_{E_{r_1, r_2; 0, c}}(g_t \Delta_\vartheta) \, \mathrm{d}t = |E_{r_1, r_2; 0, c}|
    \]
    again using $g_{-r_1}E_{r_2 - r_1, c} = E_{r_1, r_2; 0, c}$ and the fact that $\{g_t\}$ is measure preserving.

    For $0 \leq c_1 < c_2$ note that $E_{r_1, r_2; c_1, c_2} = E_{r_1, r_2; 0, c_2} \smallsetminus E_{r_1, r_2; 0, c_1}$.
    Hence
    \[
        \widehat{\mathbbm{1}}_{E_{r_1, r_2; c_1, c_2}} = \widehat{\mathbbm{1}}_{E_{r_1, r_2; 0, c_2}} - \widehat{\mathbbm{1}}_{E_{r_1, r_2; 0, c_1}},
    \]
    and it follows that for almost every $\vartheta \in M$,
    \[
        \lim_{T \to \infty} \frac{1}{T} \int_{0}^{T} \widehat{\mathbbm{1}}_{E_{r_1, r_2; c_1, c_2}}(g_t \Delta_\vartheta) \, \mathrm{d}t = |E_{r_1, r_2; c_1, c_2}|.
    \]

    Note that for all $\ell \in \mathbb{Z}_+$ one can find a finite collection of disjoint sets $\{E_{r^i_1, r^i_2; c^i_1, c^i_2}\}_{i \in I_\ell}$ so that
    \begin{gather}
        F_{r, c} \supseteq \bigsqcup_{i \in I_\ell} E_{r^i_1, r^i_2; c^i_1, c^i_2}, \label{eqFEleq}\\
        |F_{r, c}| - \frac{1}{\ell} \leq \sum_{i \in I_\ell} |E_{r^i_1, r^i_2; c^i_1, c^i_2}|. \label{eqFEvolleq}
    \end{gather}
    Let $M_\ell$ be a full measure subset of $M$ so that for all $\vartheta \in M_\ell$,
    \begin{equation}\label{eqgenEequidist}
        \lim_{T \to \infty} \frac{1}{T} \int_{0}^{T} \sum_{i \in I_\ell} \widehat{\mathbbm{1}}_{E_{r^i_1, r^i_2; c^i_1, c^i_2}}(g_t \Delta_\vartheta) \, \mathrm{d}t = \sum_{i \in I_\ell} |E_{r^i_1, r^i_2; c^i_1, c^i_2}|.
    \end{equation}
    Then $M_\infty := \bigcap M_\ell$ has full measure in $M$.
    Using~\eqref{eqFEleq} --~\eqref{eqgenEequidist} we see that for all $\vartheta \in M_\infty$, for all $\ell \in \mathbb{Z}_{+}$,
    \begin{align*}
        \liminf_{T \to \infty} \frac{1}{T} \int_{0}^{T} \widehat{\mathbbm{1}}_{F_{r, c}}(g_t \Delta_\vartheta) \, \mathrm{d}t &\geq \lim_{T \to \infty}\frac{1}{T} \int_{0}^{T} \sum_{i \in I_\ell} \widehat{\mathbbm{1}}_{E_{r^i_1, r^i_2; c^i_1, c^i_2}}(g_t \Delta_\vartheta) \, \mathrm{d}t \\
        &= \sum_{i \in I_\ell} |E_{r^i_1, r^i_2; c^i_1, c^i_2}| \\
        &\geq |F_{r, c}| - \frac{1}{\ell}.
    \end{align*}
    Therefore
    \begin{equation}\label{eqFequidistli}
        \liminf_{T \to \infty} \frac{1}{T} \int_{0}^{T} \widehat{\mathbbm{1}}_{F_{r, c}}(g_t \Delta_\vartheta) \, \mathrm{d}t \geq |F_{r, c}|,
    \end{equation}
    and it remains to note that~\eqref{eqFequidistls} and~\eqref{eqFequidistli} imply~\eqref{eqFequidist}.
\end{proof}

In the next lemma we show that for a Riemann integrable function $f$ on $K_S^{d}$, the function $\widehat{f}$ is in $C_\alpha(X)$.

\begin{lemma}\label{lemhatofriemannincal}
    For any $d$ and all sufficiently large $r$ there are constants $c_1, c_2$ such that if $\mathbbm{1}_{B_r}$ is the characteristic function of the open ball of radius $r$ around origin, then for all $\Lambda \in X$
    \begin{equation}\label{eqbralasymp}
        c_1 \alpha(\Lambda) \leq \widehat{\mathbbm{1}}_{B_r} \leq c_2 \alpha(\Lambda).
    \end{equation}
    In particular, for any Riemann integrable function $f : K_S^{d} \to \mathbb{R}, \widehat{f} \in C_\alpha(X)$.
\end{lemma}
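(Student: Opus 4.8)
The plan is to derive the two-sided bound~\eqref{eqbralasymp} from standard facts in the geometry of numbers, and then obtain $\widehat{f}\in C_\alpha(X)$ for general Riemann integrable $f$ by a domination argument together with a short measure-theoretic observation for the almost-everywhere continuity. Throughout write $N=dk$ and identify $\Lambda\in X$ with a unimodular lattice in $\mathbb{R}^{N}$ via $X\hookrightarrow \mathrm{SL}_{N}(\mathbb{R})/\mathrm{SL}_{N}(\mathbb{Z})$; by equivalence of norms we may take $B_r=\{\vv{\bm{v}}\in K_S^{d}: \norm{\vv{\bm{v}}}_2<r\}$. Let $\lambda_1\leq\cdots\leq\lambda_N$ be the successive minima of $\Lambda$ with respect to the $\norm{\cdot}_2$-unit ball, and fix linearly independent $v_1,\dots,v_N\in\Lambda$ with $\norm{v_i}_2=\lambda_i$. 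The first step is to record the comparison
\[
    \alpha(\Lambda)\asymp_N \max_{0\leq j\leq N}\frac{1}{\lambda_1\cdots\lambda_j},
\]
where the empty product ($j=0$) is $1$. Indeed $\alpha(\Lambda)\ll_N\max_j(\lambda_1\cdots\lambda_j)^{-1}$ because any rank-$j$ subgroup $\Lambda'\subseteq\Lambda$ satisfies $\lambda_i(\Lambda')\geq\lambda_i(\Lambda)$ for $i\leq j$, so Minkowski's second theorem for the $j$-dimensional lattice $\Lambda'$ gives $d(\Lambda')\gg_N\lambda_1(\Lambda)\cdots\lambda_j(\Lambda)$; the reverse inequality follows by taking $\Lambda'=\Lambda\cap\operatorname{span}_{\mathbb{R}}(v_1,\dots,v_j)$ and estimating $d(\Lambda')\leq\lambda_1\cdots\lambda_j$ via Hadamard's inequality. (This is classical; cf.~\cite{eskinmargulismozes98}.) Since $\prod_i\lambda_i\asymp_N 1$ and the $\lambda_i$ increase, the maximum is attained at $j^{*}:=\#\{i:\lambda_i\leq 1\}$, so $\alpha(\Lambda)\asymp_N(\lambda_1\cdots\lambda_{j^{*}})^{-1}$.

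The second step is the two lattice-point estimates. For the upper one I would invoke the standard bound
\[
    \widehat{\mathbbm{1}}_{B_r}(\Lambda)=\#\big((\Lambda\smallsetminus\{0\})\cap B_r\big)\ll_N\prod_{i=1}^{N}(1+r/\lambda_i),
\]
obtained by packing $\norm{\cdot}_2$-balls of radius $\lambda_1/2$ into $B_{r+\lambda_1/2}$ together with an induction on dimension along a reduced flag (or quoted directly). Using $1+r/\lambda_i\leq 2\max(1,r/\lambda_i)$ and monotonicity of the $\lambda_i$ gives $\prod_i(1+r/\lambda_i)\ll_N r^{N}\max_{0\leq j\leq N}(\lambda_1\cdots\lambda_j)^{-1}\ll_N r^{N}\alpha(\Lambda)$, i.e.\ the right-hand inequality in~\eqref{eqbralasymp} with $c_2=c_2(r,N)$. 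For the lower one, suppose $r\geq 4N$. The vectors $\sum_{i=1}^{N}m_iv_i$ with $m_i\in\mathbb{Z}$ and $|m_i|\leq r/(2N\lambda_i)$ are pairwise distinct and have $\norm{\cdot}_2\leq r/2<r$, hence lie in $B_r$; keeping only the coordinates $i\leq j^{*}$ (each of which, as $\lambda_i\leq 1\leq r/(2N)$, admits at least $r/(2N\lambda_i)$ values) yields $\#(\Lambda\cap B_r)\geq(r/(2N))^{j^{*}}(\lambda_1\cdots\lambda_{j^{*}})^{-1}\geq 2(\lambda_1\cdots\lambda_{j^{*}})^{-1}$ when $j^{*}\geq 1$, so after discarding $0$ and using $(\lambda_1\cdots\lambda_{j^{*}})^{-1}\geq 1$ we get $\widehat{\mathbbm{1}}_{B_r}(\Lambda)\geq(\lambda_1\cdots\lambda_{j^{*}})^{-1}\gg_N\alpha(\Lambda)$. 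When $j^{*}=0$ one has $\alpha(\Lambda)\asymp_N 1$, while Minkowski's first theorem gives $\lambda_1\ll_N 1<r$, so $B_r$ still contains a nonzero vector of $\Lambda$; this is the left-hand inequality with $c_1=c_1(N)$.

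For the final assertion, fix $r$ large enough that $\supp f\subseteq B_r$ and~\eqref{eqbralasymp} holds. Then for every $\Lambda\in X$, $|\widehat{f}(\Lambda)|\leq\widehat{|f|}(\Lambda)\leq\|f\|_\infty\,\widehat{\mathbbm{1}}_{B_r}(\Lambda)\leq c_2\|f\|_\infty\,\alpha(\Lambda)$, which is property~\ref{cal2}. For property~\ref{cal1}, let $D\subseteq K_S^{d}$ be a $\lambda$-null set containing the set of discontinuities of $f$ together with $\partial(\supp f)$. If $(\Lambda\smallsetminus\{0\})\cap D=\emptyset$, choose a compact neighbourhood $K$ of $\supp f$ with $\partial K\cap\Lambda=\emptyset$; then $\#(\Lambda'\cap K)$ is locally constant in $\Lambda'$ near $\Lambda$, the finitely many lattice points in $K$ vary continuously, and none of the nonzero ones crosses $\partial(\supp f)$, so near $\Lambda$ the function $\widehat{f}$ equals a fixed finite sum of values of $f$ at points moving continuously through continuity points of $f$; hence $\widehat{f}$ is continuous at $\Lambda$. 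Finally, Theorem~\ref{thmsmvtforks} applied to $\mathbbm{1}_{D\cap B_n}$ gives $\int_X\widehat{\mathbbm{1}}_{D\cap B_n}\,\mathrm{d}\mu=\lambda(D\cap B_n)=0$, hence $\widehat{\mathbbm{1}}_{D\cap B_n}=0$ $\mu$-a.e.; letting $n\to\infty$ shows that $\mu$-almost every $\Lambda$ has no nonzero vector in $D$. Therefore $\widehat{f}$ is continuous off a $\mu$-null set, so $\widehat{f}\in C_\alpha(X)$.

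The step I expect to be the main obstacle is the sharp upper bound $\#(\Lambda\cap B_r)\ll_N\prod_i(1+r/\lambda_i)$: the crude packing argument controls the count only through $\lambda_1$ and would give a bound of order $\alpha(\Lambda)^{N}$ rather than $\alpha(\Lambda)$, so one genuinely needs the reduced-basis (flag) refinement, or a careful appeal to the literature (e.g.~\cite{eskinmargulismozes98, kleinbockshiweiss17}). A secondary technical point is keeping the dependence of the implied constants on $r$ and $N$ transparent, so that \emph{sufficiently large} $r$ is a threshold uniform in $\Lambda\in X$.
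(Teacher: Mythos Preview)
Your argument is correct and more self-contained than the paper's, which is quite terse here. For the two-sided bound~\eqref{eqbralasymp} the paper simply observes that every $\Lambda\in X$ is a unimodular lattice in $\mathbb{R}^{dk}$ and invokes \cite[Lemma~5.1]{kleinbockshiweiss17} as a black box; what you have written is essentially a proof of that lemma via successive minima, Minkowski's second theorem, and the classical count $\#(\Lambda\cap B_r)\ll_N\prod_i(1+r/\lambda_i)$. Your concern about this last estimate is well placed: it does require the reduced-flag refinement, but this is exactly what is proved in the references you name. For property~\ref{cal2} you and the paper agree. For property~\ref{cal1} the approaches genuinely differ: the paper argues that $\{\Lambda:\Lambda\cap S\neq\varnothing\}$ is $\mu$-null by writing it as a countable union over $\vv{\bm v}\in\mathcal{O}^{d}\smallsetminus\{0\}$ of the sets $\{g\Gamma:g\vv{\bm v}\in S\}$, each of which is null since $g\mapsto g\vv{\bm v}$ is a submersion; you instead apply the Siegel formula to $\mathbbm{1}_{D\cap B_n}$, which is slicker and avoids any smoothness considerations. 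One small imprecision: you include $\partial(\supp f)$ in the null set $D$, but for a general Riemann integrable $f$ this boundary need not have measure zero, and in any case your continuity argument does not use it. Once you choose a compact $K\supseteq\supp f$ with $\partial K\cap\Lambda=\varnothing$, the finitely many nonzero lattice points in $K$ move continuously through continuity points of $f$, and that alone gives continuity of $\widehat f$ at $\Lambda$; so simply take $D$ to be the discontinuity set of $f$.
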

\begin{proof}
    Every element of $X$ is a lattice in $\mathbb{R}^{dk}$ of fixed determinant.
    Applying~\cite{kleinbockshiweiss17} Lemma 5.1, we see that there exists $r_0$ such that~\eqref{eqbralasymp} holds for all $r > r_0$.

    Now given any Riemann integrable function $f : K_S^{d} \to \mathbb{R}$ there exist positive $r > r_0$ and $C$ such that $|f| \leq C \cdot \mathbbm{1}_{B_r}$.
    Hence~\ref{cal2} follows from~\eqref{eqbralasymp}.
    In order to prove~\ref{cal1}, let $S$ be the set of discontinuities of $f$ in $K_S^{d} \smallsetminus \{0\}$.
    Then $|S| = 0$ and it follows that the set $S'$ of discontinuities of $\widehat{f}$ is contained in $S'' := \{\Lambda : \Lambda \cap S \neq \varnothing\}$.
    For each $\vv{\bm{v}} \in \mathcal{O}^{d} \smallsetminus \{0\}$, the set of $g \in G$ such that $g\vv{\bm{v}} \in S$ has Haar measure zero in $G$, and hence $S''$, being a countable union of sets of measure zero, is measure zero.
    Therefore $\mu(S') = 0$.
\end{proof}

We are going to need Shi's equidistribution result (\cite{shi17}, Corollary 1.3) for number fields.
This follows from much general Theorem 1.2 of~\cite{shi17}.

\begin{theorem}\label{thmshitypeforks}
    Let $\Lambda \in X$.
    Then for almost every $\vartheta \in M$, $u(\vartheta)\Lambda$ is $(D^+, C_c(X))$-generic.
\end{theorem}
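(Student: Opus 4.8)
The plan is to deduce the statement from the pointwise equidistribution theorem of Shi \cite{shi17}, once its hypotheses are matched to the present Minkowski-space setting.

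First I would record the structure of the ambient objects. The group $G=\mathrm{SL}_d(K_S)=\prod_{v\in S}\mathrm{SL}_d(K_v)$ is a connected semisimple Lie group with finite centre and no compact factors (each factor $\mathrm{SL}_d(\mathbb{R})$, resp.\ $\mathrm{SL}_d(\mathbb{C})$ regarded as a real group, is of this type), and $\Gamma=\mathrm{SL}_d(\iota_S(\mathcal{O}))$ is an irreducible lattice in $G$ by Borel--Harish-Chandra; thus $X=G/\Gamma$ carries the finite $G$-invariant measure $\mu$ and the one-parameter subgroup $D=\{g_t\}$ acts ergodically (Moore). Because the weight vectors $\bm a,\bm b$ are real and positive, the subgroup $D$ in \eqref{eqgtforks} is $\mathrm{Ad}$-diagonalizable over $\mathbb{R}$.

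Next I would identify $\mathcal U=u(M)$ inside the expanding horospherical subgroup of $g_1$. A direct computation gives $g_tu(\vartheta)g_{-t}=u\bigl((e^{(a_{i,v}+b_{j,v})t}\,\vartheta_v)_{v\in S}\bigr)$, so $D$ normalizes $\mathcal U$, and $\mathrm{Ad}(g_1)$ acts on $\mathrm{Lie}(\mathcal U)$ with all eigenvalues $e^{a_{i,v}+b_{j,v}}>1$; hence $\mathcal U$ is a closed connected $D$-normalized subgroup contained in the expanding horospherical subgroup $U^+$ of $g_1$ (it equals $U^+$ precisely when the $a_{i,v}$, resp.\ the $b_{j,v}$, do not depend on $i$, resp.\ on $j$, and is a proper subgroup of $U^+$ otherwise). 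Under the identification $\mathcal U\cong M\cong\prod_{v\in S}K_v^{mn}$ the Haar measure on $\mathcal U$ corresponds to $\nu$. Shi's Theorem~1.2 (and its Corollary~1.3) of \cite{shi17} applies to such a subgroup of the expanding horospherical of an $\mathrm{Ad}$-$\mathbb{R}$-diagonalizable one-parameter subgroup of a semisimple Lie group with an irreducible lattice, and yields: for every $\Lambda\in X$ and $\nu$-almost every $\vartheta\in M$,
\[
\lim_{T\to\infty}\frac1T\int_0^T\varphi(g_tu(\vartheta)\Lambda)\,\mathrm{d}t=\int_X\varphi\,\mathrm{d}\mu\qquad\text{for all }\varphi\in C_c(X),
\]
which is precisely the assertion that $u(\vartheta)\Lambda$ is $(D^+,C_c(X))$-generic. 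As usual one first obtains this for a countable dense subset of $C_c(X)$ on a single full-measure set of $\vartheta$, and then upgrades to all of $C_c(X)$ by uniform approximation, exactly as in \cite{kleinbockshiweiss17}.

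The only point that really requires care — and the main obstacle — is one of translation: \cite{shi17} is phrased for real Lie groups, whereas here $G$ is a product of copies of $\mathrm{SL}_d$ over the several Archimedean completions of $K$. This is harmless, since that product is again a semisimple real Lie group of the kind covered by Shi's theorem and $\mathrm{SL}_d(\mathcal{O})$ is an irreducible lattice in it; alternatively one may first pass through the embedding $X\hookrightarrow\mathrm{SL}_{dk}(\mathbb{R})/\mathrm{SL}_{dk}(\mathbb{Z})$ and verify there that $D$ becomes an $\mathbb{R}$-diagonalizable flow and $\mathcal U$ a subgroup of its expanding horospherical subgroup. One should likewise invoke Shi's result in the form allowing $\mathcal U$ to be a \emph{proper} subgroup of $U^+$ (needed when the weights within $\bm a$ or within $\bm b$ are not all equal), which is exactly what the expanding-cone formulation of \cite{shi17} provides. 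With these identifications in hand there is nothing further to prove.
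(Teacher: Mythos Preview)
Your proposal is correct and follows the same route as the paper: both simply invoke Shi's Theorem~1.2 in \cite{shi17} with $G=L=\mathrm{SL}_d(K_S)$, $\Gamma=\mathrm{SL}_d(\mathcal O)$, $U=\mathcal U$ and $x=\Lambda$. You supply considerably more of the verification (semisimplicity, irreducibility of the lattice, the computation showing $\mathcal U$ sits inside the expanding horospherical subgroup and is possibly proper there), whereas the paper records only the choice of parameters, but the substance is identical.
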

\begin{proof}
    In Theorem 1.2 of~\cite{shi17}, take $G = L = \mathrm{SL}_{d}(K_S)$, $\Gamma = \mathrm{SL}_{d}(\mathcal{O})$, $U = \mathcal{U}$ and $x = \Lambda$.
\end{proof}

Following~\cite{kleinbockshiweiss17}, we are now going to derive some general properties of convergence of measures on $X$.

\begin{lemma}\label{lemconvmeasi}
    Let $\psi \in C_\alpha(X)$ and let $\{\mu_i\}$ be a sequence of probability measures on $X$ such that $\mu_i \to \mu$ with respect to the weak-$\ast$ topology.
    Then for any non-negative $\varphi \in C_c(X)$ we have
    \[
        \lim_{i \to \infty} \int_{X} \varphi\psi \, \mathrm{d}\mu_i = \int_{X} \varphi\psi \, \mathrm{d}\mu.
    \]
\end{lemma}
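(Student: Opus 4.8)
The plan is to observe that, although $\psi$ may be unbounded on $X$, the product $h:=\varphi\psi$ is a \emph{bounded}, compactly supported function that is continuous off a $\mu$-null set, and then to pass the weak-$\ast$ limit through $\int h\,\mathrm{d}\mu_i$ by squeezing $h$ between functions in $C_c(X)$ (this is essentially the content of the portmanteau theorem).

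First I would check that $h$ is bounded. Its support lies in the compact set $K:=\supp\varphi$, and on $K$ the function $\psi$ is bounded: indeed $|\psi|\le C\alpha$ by~\ref{cal2}, while by Lemma~\ref{lemhatofriemannincal} one has $\alpha\le c_1^{-1}\widehat{\mathbbm{1}}_{B_r}$ (for $r$ large enough), and $\widehat{\mathbbm{1}}_{B_r}(\Lambda)=\#(\Lambda\cap B_r)$ is bounded uniformly over $K$ (by Mahler's criterion the lattices in $K$ have a common lower bound on the length of their shortest nonzero vector, which bounds the number of lattice points in a fixed ball via a volume–packing estimate). Since $\varphi$ is continuous everywhere, $h$ is continuous at every point where $\psi$ is, so the discontinuity set $D_h$ of $h$ is contained in that of $\psi$ and hence $\mu(D_h)=0$ by~\ref{cal1}.

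Next, fix $\varepsilon>0$. I would use that a bounded, compactly supported, $\mu$-a.e.\ continuous function is $\mu$-Riemann integrable: passing to the upper and lower semicontinuous envelopes $h^\ast\ge h\ge h_\ast$ (which are bounded, supported in $\overline{K}$, and agree with $h$ off $D_h$, hence have the same $\mu$-integral as $h$) and then approximating these from above and below by continuous compactly supported functions via outer regularity of $\mu$ and a cut-off, one obtains $\phi_1,\phi_2\in C_c(X)$ with $\phi_1\le h\le\phi_2$ and $\int_X(\phi_2-\phi_1)\,\mathrm{d}\mu<\varepsilon$. Then for every $i$,
\[
\int_X\phi_1\,\mathrm{d}\mu_i\ \le\ \int_X h\,\mathrm{d}\mu_i\ \le\ \int_X\phi_2\,\mathrm{d}\mu_i,
\]
and since $\phi_1,\phi_2\in C_c(X)$ the hypothesis $\mu_i\to\mu$ gives $\int\phi_j\,\mathrm{d}\mu_i\to\int\phi_j\,\mathrm{d}\mu$ as $i\to\infty$. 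Hence
\[
\int_X h\,\mathrm{d}\mu-\varepsilon\ \le\ \liminf_{i\to\infty}\int_X h\,\mathrm{d}\mu_i\ \le\ \limsup_{i\to\infty}\int_X h\,\mathrm{d}\mu_i\ \le\ \int_X h\,\mathrm{d}\mu+\varepsilon,
\]
and letting $\varepsilon\to0$ completes the proof.

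The only genuinely delicate point I anticipate is the construction of the squeezing functions $\phi_1,\phi_2$, i.e.\ the approximation of the bounded, compactly supported, $\mu$-a.e.\ continuous function $h$ from above and below in $L^1(\mu)$ by functions of $C_c(X)$; this rests on the regularity of the Radon measure $\mu$ together with $\mu(D_h)=0$. Everything else — the boundedness of $\varphi\psi$ and the final limit — is routine. (Alternatively, one could invoke the portmanteau theorem directly, after remarking that a weak-$\ast$ convergent sequence of probability measures with probability-measure limit is automatically tight and hence converges weakly in the $C_b(X)$ sense.)
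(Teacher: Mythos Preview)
Your proof is correct and follows essentially the same approach as the paper: both observe that $\varphi\psi$ is bounded, compactly supported, and $\mu$-a.e.\ continuous, then sandwich it between functions $h_1,h_2\in C_c(X)$ close in $L^1(\mu)$ and use weak-$\ast$ convergence on $h_1,h_2$ to pass to the limit. The only cosmetic difference is in constructing the sandwich --- the paper localizes to a coordinate chart via a partition of unity and invokes Lebesgue's Riemann integrability criterion (approximating by upper and lower Riemann sums), whereas you use semicontinuous envelopes and regularity of $\mu$; both routes are standard.
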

\begin{proof}
    Decomposing $\psi$ into real and complex parts we can assume that $\psi$ is real valued.
    Using~\ref{cal1} and~\ref{cal2} we see that $\varphi\psi$ is bounded, compactly supported and continuous except on a set of measure zero.
    By using a partition of unity, without loss of generality one can assume that $\varphi$ is supported on a coordinate chart.
    Applying Lebesgue's criterion for Riemann integrability to $\varphi\psi$, we can write $\int_{X} \varphi\psi \, \mathrm{d}\mu$ as the limit of upper and lower Riemann sums.
    It follows that given $\varepsilon > 0$ there exist $h_1, h_2 \in C_c(X)$ such that $h_1 \leq \varphi\psi \leq h_2$ and
    \begin{equation}\label{eqlemconvmeasi}
        \int_{X} (h_2 - h_1) \, \mathrm{d}\mu \leq \varepsilon.
    \end{equation}
    Thus we have
    \begin{gather}
        \int_{X} h_1 \, \mathrm{d}\mu \leq \liminf_{i \to \infty} \int_{X} \varphi\psi \, \mathrm{d}\mu_i \leq \limsup_{i \to \infty} \int_{X} \varphi\psi \, \mathrm{d}\mu_i \leq \int_{X} h_2 \, \mathrm{d}\mu \label{eqlemconvmeasii} \\
        \int_{X} h_1 \, \mathrm{d}\mu \leq \int_{X} \varphi\psi \, \mathrm{d}\mu \leq \int_{X} h_2 \, \mathrm{d}\mu. \label{eqlemconvmeasiii}
    \end{gather}
    Since $\varepsilon$ was arbitrary, the lemma follows from~\eqref{eqlemconvmeasi} --~\eqref{eqlemconvmeasiii}.
\end{proof}

\begin{corollary}\label{corconvmeasi}
    Let the notation be as in Lemma~\ref{lemconvmeasi}.
    Assume that
    \begin{equation}\label{eqcorconvmeasi}
        \lim_{i \to \infty} \int_{X} \psi \, \mathrm{d}\mu_i = \int_{X} \psi \, \mathrm{d}\mu.
    \end{equation}
    Then for any $\varepsilon > 0$ there exists $i_0 > 0$ and $\varphi \in C_c(X)$ with $0 \leq \varphi \leq 1$ such that
    \begin{equation}\label{eqcorconvmeasii}
        {\left|\int_{X} (1 - \varphi) \psi \, \mathrm{d}\mu_i\right|} < \varepsilon
    \end{equation}
    for any $i \geq i_0$.
\end{corollary}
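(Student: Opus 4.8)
The plan is to deduce everything from Lemma~\ref{lemconvmeasi} together with the hypothesis~\eqref{eqcorconvmeasi}, using that $\psi$ is $\mu$-integrable. First I would record that $\psi\in L^1(X,\mu)$: by~\ref{cal2} there is $C>0$ with $|\psi|\le C\alpha$, while Lemma~\ref{lemhatofriemannincal} gives $c_1\alpha\le\widehat{\mathbbm{1}}_{B_r}$ and the Siegel mean value theorem (Theorem~\ref{thmsmvtforks}) applied to $\mathbbm{1}_{B_r}\in L^1(K_S^{d},\lambda)$ gives $\widehat{\mathbbm{1}}_{B_r}\in L^1(X,\mu)$; hence $\int_X|\psi|\,\mathrm{d}\mu<\infty$. (Throughout we also use $\psi\in L^1(X,\mu_i)$ for every $i$, as is implicit in~\eqref{eqcorconvmeasi}; in the applications the $\mu_i$ are orbital averages and this follows from Birkhoff's theorem applied to $\widehat{\mathbbm{1}}_{B_r}$, which is comparable to $\alpha$.)

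Next, fix $\varepsilon>0$ and choose the cutoff once and for all. Since $X$ is $\sigma$-compact, pick $\varphi_n\in C_c(X)$ with $0\le\varphi_n\le1$ and $\varphi_n\to1$ pointwise on $X$. By dominated convergence with dominating function $|\psi|\in L^1(X,\mu)$,
\[
    \int_X(1-\varphi_n)\,|\psi|\,\mathrm{d}\mu\longrightarrow 0 ,
\]
so I may fix $n$ and put $\varphi:=\varphi_n$, so that $0\le\varphi\le1$ and
\[
    \left|\int_X(1-\varphi)\psi\,\mathrm{d}\mu\right|\le\int_X(1-\varphi)\,|\psi|\,\mathrm{d}\mu<\frac{\varepsilon}{2}.
\]
Crucially, $\varphi$ depends only on $\varepsilon$ and the limit measure $\mu$, not on $i$.

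Now I would split, for each $i$,
\[
    \int_X(1-\varphi)\psi\,\mathrm{d}\mu_i=\int_X\psi\,\mathrm{d}\mu_i-\int_X\varphi\psi\,\mathrm{d}\mu_i ,
\]
which is legitimate because $\varphi\psi$ is bounded with compact support (as in the proof of Lemma~\ref{lemconvmeasi}) and $\psi\in L^1(X,\mu_i)$, so all three integrals are absolutely convergent. By~\eqref{eqcorconvmeasi} the first term tends to $\int_X\psi\,\mathrm{d}\mu$, and by Lemma~\ref{lemconvmeasi} (applied to the nonnegative $\varphi\in C_c(X)$ and $\psi\in C_\alpha(X)$, using $\mu_i\to\mu$ weak-$\ast$) the second term tends to $\int_X\varphi\psi\,\mathrm{d}\mu$. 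Hence $\int_X(1-\varphi)\psi\,\mathrm{d}\mu_i\to\int_X(1-\varphi)\psi\,\mathrm{d}\mu$, and choosing $i_0$ so that $\bigl|\int_X(1-\varphi)\psi\,\mathrm{d}\mu_i-\int_X(1-\varphi)\psi\,\mathrm{d}\mu\bigr|<\varepsilon/2$ for all $i\ge i_0$ and combining with the bound of the previous paragraph yields $\left|\int_X(1-\varphi)\psi\,\mathrm{d}\mu_i\right|<\varepsilon$ for all $i\ge i_0$, which is~\eqref{eqcorconvmeasii}.

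The argument is essentially soft; the only genuine point — and the closest thing to an obstacle — is the order of quantifiers: $\varphi$ must be selected, using $\mu$-integrability of $\psi$, \emph{before} invoking the convergence $\mu_i\to\mu$, so that a single $i_0$ works uniformly over $i\ge i_0$. The only bookkeeping is justifying the term-by-term integration of $\psi=\varphi\psi+(1-\varphi)\psi$ against each $\mu_i$, which the $C_\alpha$-bound on $\psi$ (hence boundedness of $\varphi\psi$) together with $\psi\in L^1(X,\mu_i)$ takes care of.
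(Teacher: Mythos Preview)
Your proof is correct and follows essentially the same route as the paper: choose $\varphi\in C_c(X)$ using $\psi\in L^1(X,\mu)$ so that $\int_X(1-\varphi)\psi\,\mathrm{d}\mu$ is small, then invoke Lemma~\ref{lemconvmeasi} and the hypothesis~\eqref{eqcorconvmeasi} to control $\int_X(1-\varphi)\psi\,\mathrm{d}\mu_i$ for large $i$. The paper uses an $\varepsilon/3$ splitting where you use $\varepsilon/2$ (because you pass to the limit in the difference before applying the triangle inequality), and simply asserts $\psi\in L^1(X,\mu)$ rather than deriving it from~\ref{cal2} and Lemma~\ref{lemhatofriemannincal} as you do; these are purely cosmetic differences.
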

\begin{proof}
    Since $\psi \in L^1(X, \mu)$, there exists a compactly supported continuous function $\varphi : X \to [0, 1]$ such that
    \begin{equation}\label{eqcorconvmeasiii}
        {\left|\int_{X} (1 - \varphi) \psi \, \mathrm{d}\mu\right|} < \frac{\varepsilon}{3}.
    \end{equation}
    By Lemma~\ref{lemconvmeasi} and~\eqref{eqcorconvmeasi}, there exists $i_0 > 0$ such that for $i \geq i_0$
    \begin{gather}
        {\left|\int_{X} \varphi\psi \, \mathrm{d}\mu_i - \int_{X} \varphi\psi \, \mathrm{d}\mu\right|} < \frac{\varepsilon}{3} \label{eqcorconvmeasiv} \\
        {\left|\int_{X} \psi \, \mathrm{d}\mu_i - \int_{X} \psi \, \mathrm{d}\mu\right|} < \frac{\varepsilon}{3}. \label{eqcorconvmeasv}
    \end{gather}
    Therefore the corollary follows from~\eqref{eqcorconvmeasiii} --~\eqref{eqcorconvmeasv}.
\end{proof}

\begin{corollary}\label{corconvmeasii}
    Let the notation be as in Lemma~\ref{lemconvmeasi}.
    Assume that there exists a non-negative Riemann integrable function $f_0$ on $K_S^{d}$ such that $|\psi| \leq \widehat{f}_0$ and
    \begin{equation}\label{eqcorconvmeasvi}
        \lim_{i \to \infty} \int_{X} \widehat{f}_0 \, \mathrm{d}\mu_i = \int_{X} \widehat{f}_0 \, \mathrm{d}\mu.
    \end{equation}
    Then
    \[
        \lim_{i \to \infty} \int_{X} \psi \, \mathrm{d}\mu_i = \int_{X} \psi \, \mathrm{d}\mu.
    \]
\end{corollary}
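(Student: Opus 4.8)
The plan is to write $\psi = \varphi\psi + (1-\varphi)\psi$ for a suitably chosen cutoff $\varphi \in C_c(X)$, dispose of the compactly supported piece $\varphi\psi$ by the weak-$\ast$ convergence encoded in Lemma~\ref{lemconvmeasi}, and control the tail piece $(1-\varphi)\psi$ \emph{uniformly in $i$} by dominating it by $(1-\varphi)\widehat{f}_0$ and invoking Corollary~\ref{corconvmeasi}. First I would note that $\widehat{f}_0 \in C_\alpha(X)$ by Lemma~\ref{lemhatofriemannincal} and $\widehat{f}_0 \in L^1(X,\mu)$ by Siegel's mean value theorem (Theorem~\ref{thmsmvtforks}), so that the hypothesis~\eqref{eqcorconvmeasvi} places us exactly in the setting of Corollary~\ref{corconvmeasi} with $\psi$ replaced by the non-negative function $\widehat{f}_0$. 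Fix $\varepsilon > 0$. That corollary then yields $i_0 > 0$ and $\varphi \in C_c(X)$ with $0 \leq \varphi \leq 1$ such that $\int_X (1-\varphi)\widehat{f}_0 \, \mathrm{d}\mu_i < \varepsilon$ for all $i \geq i_0$; moreover the $\varphi$ produced in its proof is chosen so that $\int_X (1-\varphi)\widehat{f}_0 \, \mathrm{d}\mu < \varepsilon$ as well (this can in any case be arranged directly since $\widehat{f}_0 \in L^1(\mu)$, by taking the support of $\varphi$ large enough).

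With this $\varphi$ fixed, I would estimate, using $|\psi| \leq \widehat{f}_0$ and $0 \leq 1-\varphi \leq 1$, that for all $i \geq i_0$
\begin{align*}
    \left| \int_X \psi \, \mathrm{d}\mu_i - \int_X \psi \, \mathrm{d}\mu \right|
    &\leq \left| \int_X \varphi\psi \, \mathrm{d}\mu_i - \int_X \varphi\psi \, \mathrm{d}\mu \right| + \int_X (1-\varphi)\widehat{f}_0 \, \mathrm{d}\mu_i + \int_X (1-\varphi)\widehat{f}_0 \, \mathrm{d}\mu \\
    &\leq \left| \int_X \varphi\psi \, \mathrm{d}\mu_i - \int_X \varphi\psi \, \mathrm{d}\mu \right| + 2\varepsilon .
\end{align*}
Since $\varphi \in C_c(X)$ is non-negative and $\psi \in C_\alpha(X)$, Lemma~\ref{lemconvmeasi} gives $\int_X \varphi\psi \, \mathrm{d}\mu_i \to \int_X \varphi\psi \, \mathrm{d}\mu$, so the first term tends to $0$; hence $\limsup_{i\to\infty} \left| \int_X \psi \, \mathrm{d}\mu_i - \int_X \psi \, \mathrm{d}\mu \right| \leq 2\varepsilon$, and letting $\varepsilon \to 0$ concludes the proof.

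The only genuinely delicate point — and the reason the extra hypothesis~\eqref{eqcorconvmeasvi} is imposed rather than mere weak-$\ast$ convergence — is the uniform-in-$i$ smallness of the cusp tails $\int_X (1-\varphi)|\psi| \, \mathrm{d}\mu_i$: under weak-$\ast$ convergence alone, mass of $\mu_i$ could escape to the cusp and, since $\psi$ is unbounded there, inflate $\int_X \psi \, \mathrm{d}\mu_i$ in the limit. Domination by $\widehat{f}_0$ together with the assumed convergence $\int_X \widehat{f}_0 \, \mathrm{d}\mu_i \to \int_X \widehat{f}_0 \, \mathrm{d}\mu$ is precisely what prevents this, and it is packaged in Corollary~\ref{corconvmeasi}; everything else in the argument is the routine three-$\varepsilon$ bookkeeping displayed above.
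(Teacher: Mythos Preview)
Your proof is correct and follows essentially the same approach as the paper: split $\psi = \varphi\psi + (1-\varphi)\psi$, handle the compactly supported piece via Lemma~\ref{lemconvmeasi}, and control the tail piece uniformly in $i$ by applying Corollary~\ref{corconvmeasi} to $\widehat{f}_0$ (using $|\psi|\le\widehat{f}_0$). The only difference is cosmetic bookkeeping---the paper arranges all three estimates with a single $\varepsilon/3$, whereas you allow $2\varepsilon$ plus a vanishing term---but the argument is the same.
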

\begin{proof}
    Using Lemma~\ref{lemconvmeasi}, Corollary~\ref{corconvmeasi} and~\eqref{eqcorconvmeasvi}, we have that for $\varepsilon > 0$ there exists $i_0 > 0$ and a continuous compactly supported function $\varphi : X \to [0, 1]$ such that for $i \geq i_0$
    \begin{gather*}
        {\left|\int_{X} \varphi\psi \, \mathrm{d}\mu_i - \int_{X} \varphi\psi \, \mathrm{d}\mu\right|} < \frac{\varepsilon}{3}, \\
        \int_{X} (1 - \varphi) \widehat{f}_0 \, \mathrm{d}\mu_i < \frac{\varepsilon}{3}, \\
        \int_{X}(1 - \varphi) \widehat{f}_0 \, \mathrm{d}\mu < \frac{\varepsilon}{3}.
    \end{gather*}
    Using $|\psi| \leq \widehat{f}_0$ and the above inequalities, we get that
    \[
        {\left|\int_{X} \psi \, \mathrm{d}\mu_i - \int_{X} \psi \, \mathrm{d}\mu\right|} < \varepsilon
    \]
    for $i > i_0$.
    Hence we are done.
\end{proof}

Now for given $\psi \in C_\alpha(X)$ we are going to apply Corollary~\ref{corofschi} and Corollary~\ref{corofschii} to find a function $f_0$ satisfying the hypothesis of Corollary~\ref{corconvmeasii}.
Theorem~\ref{thmshitypeforks} guarantees that there is a full measure subset $M'$ of $M$ so that $\Delta_\vartheta$ is $(D^+, C_c(X))$-generic for every $\vartheta \in M'$.

\begin{lemma}\label{lemequidistforannulus}
    For $r > dk$ let $A_r$ be the annular region defined by $A_r := \{\vv{\bm{z}} \in K_S^{d} : dk < \norm{\vv{\bm{z}}}_2 < r\}$.
    Then for every $\vartheta \in M'$
    \[
        \lim_{T \to \infty} \frac{1}{T} \int_{0}^{T} \widehat{\mathbbm{1}}_{A_r}(g_t \Delta_\vartheta) \, \mathrm{d}t = |A_r|.
    \]
\end{lemma}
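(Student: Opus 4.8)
The plan is to squeeze $\psi := \widehat{\mathbbm{1}}_{A_r}$ between the zero function and the Siegel transform of a nonnegative Riemann integrable function assembled from the regions $E_{\cdot,\cdot}$ and $F_{\cdot,\cdot}$, whose Siegel transforms already equidistribute along $g_t\Delta_\vartheta$ by Corollaries~\ref{corofschi} and~\ref{corofschii}, and then to conclude via Corollary~\ref{corconvmeasii}.

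First I would note that $\mathbbm{1}_{A_r}$ is Riemann integrable on $K_S^{d}$ (bounded, compactly supported, discontinuous only on the two spheres bounding $A_r$), so $\psi = \widehat{\mathbbm{1}}_{A_r} \in C_\alpha(X)$ by Lemma~\ref{lemhatofriemannincal}, and $\int_X \psi\,\mathrm{d}\mu = |A_r|$ by Siegel's formula (Theorem~\ref{thmsmvtforks}). Write $\mu_T$ for the probability measure on $X$ with $\int_X\varphi\,\mathrm{d}\mu_T = \tfrac1T\int_0^T\varphi(g_t\Delta_\vartheta)\,\mathrm{d}t$; since $\vartheta\in M'$, i.e.\ $\Delta_\vartheta$ is $(D^+, C_c(X))$-generic, we have $\mu_T \to \mu$ in the weak-$\ast$ topology.

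The key step is the covering inclusion: for $r > dk$ there are $\rho_1,\rho_2,c_0 > 0$ depending only on $r$ with
\[
    A_r \subseteq F_{\rho_1, c_0} \cup E_{\rho_2, c_0}.
\]
Indeed, on $A_r$ every coordinate of $\vv{\bm{z}} = (\vv{\bm{x}},\vv{\bm{y}})$ has absolute value below $r$, so $\norm{\vv{\bm{x}}}_{\bm{a}}$, $\norm{\vv{\bm{y}}}_{\bm{b}}$, and $\norm{\vv{\bm{x}}}_{\bm{a}}\norm{\vv{\bm{y}}}_{\bm{b}}$ are bounded above by constants $R, R', RR'$; taking $c_0 := RR' + 1$, $\rho_1 := \ln(R+1)$, $\rho_2 := \ln(R'+1)$, it suffices to observe that at least one of $\norm{\vv{\bm{x}}}_{\bm{a}} \ge 1$ and $\norm{\vv{\bm{y}}}_{\bm{b}} \ge 1$ holds on $A_r$ — which is exactly where the inner radius $dk$ is used: if $\norm{\vv{\bm{x}}}_{\bm{a}} < 1$ and $\norm{\vv{\bm{y}}}_{\bm{b}} < 1$ then all coordinates of $\vv{\bm{z}}$ have absolute value below $1$, so $\norm{\vv{\bm{z}}}_2^2 < d\cdot\#S \le dk \le (dk)^2$, forcing $\vv{\bm{z}}\notin A_r$. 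Splitting $A_r$ according to whether $\norm{\vv{\bm{x}}}_{\bm{a}} \ge 1$ then gives the two inclusions into $F_{\rho_1,c_0}$ and $E_{\rho_2,c_0}$ respectively.

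Finally, set $f_0 := \mathbbm{1}_{F_{\rho_1,c_0}} + \mathbbm{1}_{E_{\rho_2,c_0}}$: this is nonnegative and Riemann integrable on $K_S^{d}$, $\mathbbm{1}_{A_r} \le f_0$ pointwise, hence $0 \le \psi \le \widehat{f}_0$ on $X$. By linearity together with Corollary~\ref{corofschi} (applied to $E_{\rho_2,c_0}$) and Corollary~\ref{corofschii} (applied to $F_{\rho_1,c_0}$), $\int_X \widehat{f}_0\,\mathrm{d}\mu_T \to \int_X\widehat{f}_0\,\mathrm{d}\mu$ as $T\to\infty$ for all $\vartheta$ outside a $\nu$-null set, which we absorb into $M'$. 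The hypotheses of Corollary~\ref{corconvmeasii} are thus met with $\psi$, $f_0$ and $\mu_i := \mu_{T_i}$ along an arbitrary sequence $T_i\to\infty$, and it yields $\int_X\psi\,\mathrm{d}\mu_{T_i} \to \int_X\psi\,\mathrm{d}\mu = |A_r|$; since the sequence is arbitrary, $\tfrac1T\int_0^T\widehat{\mathbbm{1}}_{A_r}(g_t\Delta_\vartheta)\,\mathrm{d}t \to |A_r|$. The one substantive point here is the covering inclusion; everything else is bookkeeping with results already established, the only subtlety being the reconciliation of ``every $\vartheta\in M'$'' with the almost-everywhere conclusions of Corollaries~\ref{corofschi}--\ref{corofschii}, which costs a single null set.
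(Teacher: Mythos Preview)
Your argument is correct and follows essentially the same route as the paper: cover $A_r$ by a union $E_{r',c'}\cup F_{r',c'}$, take $f_0$ to be the sum of their indicators, invoke Corollaries~\ref{corofschi} and~\ref{corofschii} to verify hypothesis~\eqref{eqcorconvmeasvi}, and conclude via Corollary~\ref{corconvmeasii}. You supply details the paper omits --- notably the explicit use of the inner radius $dk$ to force $\max(\norm{\vv{\bm{x}}}_{\bm{a}},\norm{\vv{\bm{y}}}_{\bm{b}})\ge 1$ on $A_r$ --- and you flag the tension between ``every $\vartheta\in M'$'' in the statement and the almost-everywhere conclusions of Corollaries~\ref{corofschi}--\ref{corofschii}, which the paper silently glosses over; absorbing the (countably many) extra null sets into $M'$ is indeed the intended reading.
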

\begin{proof}
    There exists $r', c' > 0$ such that $A_r \subseteq E_{r', c'} \cup F_{r', c'}$, hence it follows that $\widehat{\mathbbm{1}}_{A_r} \leq \widehat{\mathbbm{1}}_{E_{r', c'}} + \widehat{\mathbbm{1}}_{F_{r', c'}}$.
    Therefore Corollary~\ref{corofschi} and Corollary~\ref{corofschii} imply that~\eqref{eqcorconvmeasvi} holds for $f_0 = \mathbbm{1}_{E_{r', c'}} + \mathbbm{1}_{F_{r', c'}}$.
    Hence this lemma follows from Corollary~\ref{corconvmeasii}.
\end{proof}

\begin{lemma}\label{lemequidistforball}
    Let $B_r$ be the open ball of radius $r > 0$ around the origin in $K_S^{d}$.
    Then for every $\vartheta \in M'$
    \[
        \lim_{T \to \infty} \frac{1}{T} \int_{0}^{T} \widehat{\mathbbm{1}}_{B_r}(g_t \Delta_\vartheta) \, \mathrm{d}t = |B_r|.
    \]
\end{lemma}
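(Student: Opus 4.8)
The plan is to express the indicator of the ball $B_r$ in terms of indicators of annular regions of the type handled in Lemma~\ref{lemequidistforannulus}, plus a fixed bounded-radius core that contributes nothing asymptotically. Concretely, for the fixed threshold $dk$ I would write $B_r = B_{dk} \sqcup A_r^{cl}$, where $A_r^{cl} = \{\vv{\bm{z}} \in K_S^d : dk \le \norm{\vv{\bm{z}}}_2 < r\}$ differs from the open annulus $A_r$ of Lemma~\ref{lemequidistforannulus} only on the sphere $\norm{\vv{\bm{z}}}_2 = dk$, a set of $\lambda$-measure zero. Thus $\widehat{\mathbbm{1}}_{B_r} = \widehat{\mathbbm{1}}_{B_{dk}} + \widehat{\mathbbm{1}}_{A_r} + \widehat{\mathbbm{1}}_{\{\norm{\vv{\bm{z}}}_2 = dk\}}$ pointwise on $X$, and the last term is supported on a measure-zero subset of $K_S^d$; I would want to note that its Siegel transform vanishes $\mu$-a.e.\ on $X$ (the set of $\Lambda$ meeting the sphere $\norm{\cdot}_2 = dk$ is $\mu$-null, by the same argument used at the end of the proof of Lemma~\ref{lemhatofriemannincal}), so it does not affect the time average for a.e.\ $\vartheta$; alternatively, and more cleanly, I can enlarge $M'$ to a still-full-measure set on which this transform is identically zero, or simply absorb this sphere into the core ball $B_{dk}$.

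The key point is then to control the contribution of the core $B_{dk}$. Here I would invoke that $\mathbbm{1}_{B_{dk}}$ is a Riemann integrable function on $K_S^d$, so by Lemma~\ref{lemhatofriemannincal} its Siegel transform $\widehat{\mathbbm{1}}_{B_{dk}}$ lies in $C_\alpha(X)$. Since every $\vartheta \in M'$ has $\Delta_\vartheta$ being $(D^+, C_c(X))$-generic (Theorem~\ref{thmshitypeforks}), and since Corollaries~\ref{corofschi} and~\ref{corofschii} give $(D^+, \cdot)$-genericity of $\Delta_\vartheta$ with respect to $\widehat{\mathbbm{1}}_{E_{r',c'}}$ and $\widehat{\mathbbm{1}}_{F_{r',c'}}$ for suitable $r', c'$ dominating $B_{dk}$, Corollary~\ref{corconvmeasii} (applied with $\mu_i$ the normalized time averages along $g_t\Delta_\vartheta$ and $\psi = \widehat{\mathbbm{1}}_{B_{dk}}$, $f_0 = \mathbbm{1}_{E_{r',c'}} + \mathbbm{1}_{F_{r',c'}}$) yields
\[
    \lim_{T \to \infty} \frac{1}{T} \int_0^T \widehat{\mathbbm{1}}_{B_{dk}}(g_t\Delta_\vartheta) \, \mathrm{d}t = \int_X \widehat{\mathbbm{1}}_{B_{dk}} \, \mathrm{d}\mu = |B_{dk}|,
\]
the last equality by the Siegel integral formula (Theorem~\ref{thmsmvtforks}). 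This is essentially the same argument as in the proof of Lemma~\ref{lemequidistforannulus}, so I could instead just apply Corollary~\ref{corconvmeasii} directly to $\psi = \widehat{\mathbbm{1}}_{B_r}$ itself, observing $B_r \subseteq E_{r',c'} \cup F_{r',c'}$ for $r', c'$ large.

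Adding the two limits — $|B_{dk}|$ from the core and $|A_r|$ from Lemma~\ref{lemequidistforannulus} — and using additivity of $\lambda$ together with $|B_r| = |B_{dk}| + |A_r|$, gives the claim for every $\vartheta \in M'$. I do not expect any serious obstacle: the only mild subtlety is the bookkeeping on the measure-zero sphere $\norm{\cdot}_2 = dk$, which is handled by the Fubini-type argument already used for $C_\alpha$-membership, and everything else is an application of the machinery (Corollaries~\ref{corofschi}--\ref{corconvmeasii}, Lemma~\ref{lemhatofriemannincal}, Theorem~\ref{thmshitypeforks}) assembled exactly as in Lemma~\ref{lemequidistforannulus}. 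In fact the cleanest write-up bypasses the decomposition entirely: since $B_r \subseteq E_{r',c'} \cup F_{r',c'}$ for appropriate $r', c'$, one has $\widehat{\mathbbm{1}}_{B_r} \le \widehat{\mathbbm{1}}_{E_{r',c'}} + \widehat{\mathbbm{1}}_{F_{r',c'}} =: \widehat{f}_0$, Corollaries~\ref{corofschi} and~\ref{corofschii} verify~\eqref{eqcorconvmeasvi} for this $f_0$ and every $\vartheta \in M'$, and Corollary~\ref{corconvmeasii} applied to $\psi = \widehat{\mathbbm{1}}_{B_r}$ finishes the proof, with the value of the limit being $\int_X \widehat{\mathbbm{1}}_{B_r}\,\mathrm{d}\mu = |B_r|$ by Theorem~\ref{thmsmvtforks}.
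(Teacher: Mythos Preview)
There is a genuine gap. Your argument hinges on the containment $B_r \subseteq E_{r',c'} \cup F_{r',c'}$ (and the analogous one for $B_{dk}$ in the decomposition approach), but this containment is false for every choice of $r', c'$: by definition every point of $E_{r',c'}$ has $\norm{\vv{\bm{y}}}_{\bm{b}} \geq 1$ and every point of $F_{r',c'}$ has $\norm{\vv{\bm{x}}}_{\bm{a}} \geq 1$, whereas the ball $B_r$ contains an entire neighborhood of the origin on which both quasi-norms are strictly less than $1$. Hence no $f_0$ of the form $\mathbbm{1}_{E_{r',c'}} + \mathbbm{1}_{F_{r',c'}}$ can dominate $\mathbbm{1}_{B_r}$, and Corollary~\ref{corconvmeasii} cannot be invoked as you propose. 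The same obstruction kills the decomposition route, since your handling of the core $B_{dk}$ relies on the identical false containment. This is precisely why Lemma~\ref{lemequidistforannulus} was stated for an annulus with inner radius $dk$: that inner radius is what guarantees at least one quasi-norm is $\geq 1$, and the argument simply does not extend to a set touching the origin.

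The paper supplies the missing idea via a translation trick. Using a uniform upper bound on the first successive minimum $\lambda_1(\Lambda)$ over all $\Lambda \in X$ (from~\cite{kleinbockshitomanov17}), one obtains $s>0$ such that every unimodular lattice meets every Euclidean annulus of width at least $s$. For a given $\Lambda$ one then picks $\vv{\bm{v}} \in \Lambda$ with $dk + r < \norm{\vv{\bm{v}}}_2 < dk + s + r$; the translate $B_r + \vv{\bm{v}}$ lies inside $A_{dk+s+2r}$, and since translation by a lattice vector is a bijection of $\Lambda$ one gets the \emph{pointwise} bound $\widehat{\mathbbm{1}}_{B_r}(\Lambda) \leq \widehat{\mathbbm{1}}_{A_{dk+s+2r}}(\Lambda)$ for every $\Lambda \in X$. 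Now Lemma~\ref{lemequidistforannulus} verifies hypothesis~\eqref{eqcorconvmeasvi} for $f_0 = \mathbbm{1}_{A_{dk+s+2r}}$, and Corollary~\ref{corconvmeasii} finishes the proof. The essential step you are missing is this lattice translation, which moves the ball away from the origin into a region already controlled by Lemma~\ref{lemequidistforannulus}.
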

\begin{proof}
    We first observe that there exists $s> 0$ any annular region of width $\geq s$ contains a lattice point of any unimodular lattice in $K_S^{d}$.
    Given $\Lambda \in X$ let
    \[
        \lambda_1(\Lambda) := \inf\{\lambda \geq 0 : \dim_{K}(\operatorname{span} (B_1 \cap \Lambda)) \geq 1\}.
    \]
    Using Theorem 1.2 of~\cite{kleinbockshitomanov17} we get that $\exists~s > 1$ such that $\lambda_1(\Lambda) \leq s-1$ for all $\Lambda \in X$.
    Therefore for any vector $\vv{\bm{v}}_{1} \in \Lambda$ with $\norm{\vv{\bm{v}}_{1}}_{2} = \lambda_1(\Lambda)$, any annular region of width $s$ would contain some integer multiple of $\vv{\bm{v}_{1}}$.
    So for any unimodular lattice $\Lambda$ in $K_S^{d}$ let $\vv{\bm{v}} \in \Lambda \cap (A_{dk+s+r} \smallsetminus A_{dk+r})$, and we have $\#(B_r \cap \Lambda) = \#((B_r + \vv{\bm{v}}) \cap \Lambda)$.
    It follows that for any lattice $\Lambda \in X$, the number of lattice points in $B_r$ is at most the number of lattice points in $A_{dk+s+2r}$, i.e., $\widehat{\mathbbm{1}}_{B_r} \leq \widehat{\mathbbm{1}}_{A_{dk+s+2r}}$.
    Therefore the conclusion of this lemma follows from Corollary~\ref{corconvmeasii} and Lemma~\ref{lemequidistforannulus}.
\end{proof}

\subsection*{Proof of Theorem~\ref{thmcalgenofDevth}}
It follows that for every $\vartheta \in M'$ and for all sufficiently large positive integers $r$
\[
    \lim_{T \to \infty} \frac{1}{T} \int_{0}^{T} \widehat{\mathbbm{1}}_{B_r}(g_t \Delta_\vartheta) \, \mathrm{d}t = \int_{X} \widehat{\mathbbm{1}}_{B_r} \, \mathrm{d}\mu.
\]
From~\ref{lemhatofriemannincal} it follows that for any $\psi \in C_\alpha(X)$ there exists $C > 0$ and $r \in \mathbb{Z}_+$ such that $|\psi(\Lambda)| \leq C \cdot \widehat{\mathbbm{1}}_{B_r}(\Lambda)$ for all $\Lambda \in X$.
It again follows from Corollary~\ref{corconvmeasii} that $\Delta_\vartheta$ is $(D^+, \psi)$-generic for every $\vartheta \in M'$, and hence the proof is complete. \qed\

\subsection*{Proof of Theorem~\ref{thmequidistonsphereforksk}}
Let $f_{A, B, r, c}$ be the characteristic function $E_{r, c}(A, B)$.
The assumption that the measures of boundaries of $A$ and $B$ are zero implies that $E_{r, c}(A, B)$ has measure zero boundary in $K_S^{d}$.
Hence $f_{A, B, r, c}$ is Riemann integrable on $K_S^{d}$.
Applying Theorem~\ref{thmcalgenofDevth} and~\eqref{eqsandwichequidistfhat3} we get
\[
    \lim_{T \to \infty} \frac{\#(E_{T, c}(A, B; \Delta_\vartheta))}{T} = \lim_{T \to \infty} \frac{1}{Tr} \int_{0}^{T} \widehat{f}_{A, B, r, c}(g_t \Delta_\vartheta) \, \mathrm{d}t = \frac{1}{r} \int_{X} \widehat{f}_{A, B, r, c} \, \mathrm{d}\mu = \frac{1}{r} |E_{r, c}(A, B)|.
\]
It now suffices to show that
\begin{equation}\label{eqetcabvol}
    r \cdot |E_{T, c}(A, B)| = T \cdot |E_{r, c}(A, B)|.
\end{equation}
For $T, r > 0$ such that $T/r = \ell \in \mathbb{Z}_+$, we have
\[
    E_{T, c}(A, B) = \bigsqcup_{j=0}^{\ell-1} g_{-rj}(E_{r, c}(A, B)),
\]
and hence~\eqref{eqetcabvol} follows for $T/r \in \mathbb{Z}_+$, since $\{g_t\}$ preserves $\lambda$.
From this one deduces~\eqref{eqetcabvol} for $T/r \in \mathbb{Q}$.
Finally for arbitrary $T, r > 0$ let $T_1, T_2 > 0$ be such that $T_1 < T < T_2$ and $T_1/r, T_2/r \in \mathbb{Q}$.
Then
\begin{gather*}
    E_{T_1, c}(A, B) \subseteq E_{T, c}(A, B) \subseteq E_{T_2, c}(A, B) \\
    \implies T_1 \cdot |E_{r, c}(A, B)| \leq r \cdot |E_{T, c}(A, B)| \leq T_2 \cdot |E_{r, c}(A, B)|.
\end{gather*}
Hence we get~\eqref{eqetcabvol} for all $T, r > 0$ by taking limits. \qed\


\subsection{Remark}
One could ask for the average value of $\#(E_{T, c}(A, B; \Delta_\vartheta))$ as $\Delta$ varies over $X$. It turns out that $\forall~\vartheta \in M$,
\begin{align*}
    \int_{X} \#(E_{T, c}(A, B; \Delta_\vartheta)) \, \mathrm{d}\mu(\Delta) &= \int_{X} \widehat{f}_{A, B, T, c} (u(\vartheta) \Delta) \, \mathrm{d}\mu(\Delta) \\
    &= \int_{K_S^{d}} f_{A, B, T, c} (u(\vartheta) \vv{\bm{z}}) \, \mathrm{d}\lambda(\vv{\bm{z}}) \\
    &= \int_{K_S^{d}} f_{A, B, T, c} (\vv{\bm{z}}) \, \mathrm{d}\lambda(\vv{\bm{z}}) \\
    &= |E_{T, c}(A, B)|.
\end{align*}



\subsection*{Acknowledgements} We thank the anonymous referee for helpful remarks.

\bibliographystyle{alpha}
\bibliography{AG_revised}

\end{document}